\newtheorem{theorem}{Theorem}
\newtheorem{conjecture}[theorem]{Conjecture}
\newtheorem{corollary}[theorem]{Corollary}
\newtheorem{lemma}[theorem]{Lemma}
\newtheorem{proposition}[theorem]{Proposition}
\newtheorem{remark}[theorem]{Remark}
\newtheorem*{thma}{Theorem A}
\renewenvironment{proof}[1][Proof]{\textbf{#1.} }{\ \rule{0.5em}{0.5em}}
\newcommand{\RR}{{\mathbb R}}
\newcommand{\CC}{{\mathbb C}}
\newcommand{\NN}{{\mathbb N}}
\newcommand{\ZZ}{{\mathbb Z}}
\newcommand{\MM}{{\mathcal M}_+}
\newcommand{\GG}{{\mathcal G}}
\newcommand{\II}{{\mathcal I}}
\newcommand{\SSS}{{S}}
\newcommand{\PPP}{{P}}
\newcommand{\HHH}{{\mathcal H}}
\newcommand{\GGc}{\check{\mathcal G}}
\newcommand{\EE}{{\mathcal E}}
\newcommand{\MMM}{{\mathcal M}}
\newcommand{\EEE}{{\tilde{\mathcal E}}}
\newcommand{\Cpc}{\mbox{Cap}}
\newcommand{\cpc}{\mbox{Cap}}
\newcommand{\oT}{{\overline{T}}}
\newcommand{\oS}{{\overline{S}}}
\newcommand{\pp}{{p^{\prime}}}
\newcommand{\onepp}{{1-p^{\prime}}}
\newcommand{\orho}{{\overline{\rho}}}
\newcommand{\supp}{\mbox{supp}}
\begin{document}

\title{Potential theory on trees, graphs and Ahlfors-regular metric spaces}

\author[N. Arcozzi]{Nicola Arcozzi}
\address{Dipartimento di Matematica, Universita di Bologna, 40127 Bologna, ITALY}
\email{arcozzi@dm.unibo.it }

\author[R. Rochberg]{Richard Rochberg}
\address{Department of Mathematics, Washington University, St. Louis, MO 63130, U.S.A}
\email{rr@math.wustl.edu}

\author[E. T. Sawyer]{Eric T. Sawyer}
\address{Department of Mathematics \& Statistics, McMaster University; Hamilton,
Ontairo, L8S 4K1, CANADA}
\email{sawyer@mcmaster.ca}

\author[B. D. Wick]{Brett D. Wick}
\address{School of Mathematics, Georgia Institute of Technology, 686 Cherry Street,
Atlanta, GA USA 30332--0160}
\email{wick@math.gatech.edu }

\thanks{The first author's work partially supported by the COFIN project Analisi
Armonica, funded by the Italian Minister for Research}
\thanks{The second author's work supported by the National Science Foundation under
Grant No. 0700238}
\thanks{The third author's work supported by the National Science and Engineering
Council of Canada.}
\thanks{The fourth author's work supported by the National Science Foundation under
Grant No. 1001098  and 0955432}

\subjclass[2010]{XXXX}

\date{\today}
\keywords{Ahlfors-regular metric space, potential theory, capacity}

\begin{abstract}
We investigate connections between potential theories on a Ahlfors-regular metric space $X$, on a graph $G$ associated with $X$, and on the tree $T$ obtained by removing the ``horizontal edges'' in $G$. Applications to the calculation of set capacity are given.
\end{abstract}

\maketitle

\tableofcontents

\section{Introduction}\label{intro}

One of the cornerstones of potential theory is the notion of set capacity, which plays 
there a r\^ole analogous to that played by Lebesgue measure, or Haar measure, in the theory
of $L^p$ spaces and harmonic analysis, to oversimplify a bit. The notion of capacity has
its origins in physics, where it measures the maximum amount of (positive, say) 
electric charge which can be 
carried by a conductor while keeping the potential generated by that charge below a fixed 
threshold. The notion of capacity has been extended to nonlinear potentials, to various metric
space settings, to the theory of stochastic processes and more. Since a long time, capacity had
its peculiar r\^ole in the theory of conformal mappings and it is foundational material for the 
theory of quasi-conformal mappings. 
There is an extensive literature
on these topics and we merely refer the reader to a tiny sample of it \cite{Ts} \cite{AH} 
\cite{Mazya} \cite{Doob} \cite{He}.

A basic fact about set capacity is that it is subadditive, but not additive; to
the point that there are sets of positive, finite capacity with different Hausdorff dimensions.
Capacity, that is, is much different from, although linked with, Hausdorff measure.  
This fact makes it computing, or just estimating, set capacity a craft on its own. 
The capacity  of a set, in fact, measures together its geometric size and the way its pieces are 
distributed in the surrounding space. The bed of nails of a fakir is a well known example: 
cleverly arranged nails sum up to little area, but they might be distributed so as to have sufficient capacity  
not to break the membrane which is laid on them.

There is one context, however, where an elementary algorithm to compute capacity exists,
and that is the context of trees. The capacity of subsets of the tree's boundary, in fact,  
reduces to the calculation of continued fractions of generalized type. This fact seems to have been independently 
observed by all researchers who, for different reasons, developed some potential theory on trees 
\cite{LyPe}\cite{So}\cite{ARS3}.  It is interesting then to know if these simple calculations on trees can be of some 
help in estimating capacities of  sets in, say, Euclidean space.

In this article we show that this is in fact possible in the rather general context of Ahlfors regular metric spaces, 
for $p$-capacities associated with suitable Bessel-type kernels. To each such space $X$ we associate a tree $T$: 
the capacity of a closed subset of $X$ turns out to be estimated from above and below by the capacity of a corresponding 
closed subset of $T$'s boundary. The result seems new even for Bessel capacity in Euclidean space $\RR^n$, $n\ge2$. 
For linear capacities and $n=1$ it was proved by Benjamini and Peres in \cite{BP}.  For $n=1$ and $p\ne2$ it  
follows rather directly from the results in \cite{VW}. The boundary $\partial T$ of the tree $T$ is 
a totally disconnected set with respect to a natural metric. The problem of estimating set capacities is reduced then 
to an analogous problem for subsets of a generalized Cantor set.

The interest of the result, we believe, goes beyond the problem of estimating set capacities \it per se\rm. 
Many problems in potential theory have an essentially combinatorial nature, which is best seen when they are 
translated in the language of trees. 

\smallskip

Our interest in such questions developed while the first three authors were working with 
Carleson measures for Dirichlet-type spaces of holomorphic functions on the unit disc. 
They can can be characterized by means of
a condition involving logarithmic capacity \cite{Ste} or by a discrete testing-type condition making use
of the tree structure of the disc's Whitney decomposition \cite{ARS1}. A \it direct \rm 
proof that testing conditions and capacitary conditions are equivalent in the tree context is in \cite{ARS3}. 
This is evidence that a strict interplay exists in potential theory between some metric spaces and the trees 
obtained by discretizing them.

Observations of a similar flavor have been made before. In \cite{BP}, Benjamini and Peres proved that 
the recurrence/transience dichotomy for some random walks on trees can be decided in terms of logarithmic capacity in the 
Euclidean plane where the trees have been suitably imbedded. In \cite{VW}, Verbitsky and Wheeden proved in some generality 
that the study of nonlinear potentials can be reduced to ``dyadic'' 
potentials. In retrospect, it might be said that the equivalence between dyadic and continuous potentials was implicit in
Wolff's proofs in \cite{HW}, and this article might be seen as a long commentary on Wolff's work. 

The linear, one-dimensional, Euclidean version of the results that are here discussed in greater generality
is used in \cite{ARSW} to prove a Nehari-type theorem for bilinear forms on the holomorphic Dirichlet space.
In \cite{A} it is used to find asymptotic estimates of some condenser capacities in the complex plane. The latter
can be extended to higher dimensions using the results of the present article \cite{AB}.

There are three main tools which we use to build the bridge between trees and Ahlfors-regular spaces. 
The first is Christ's dyadic decomposition of a homogeneous metric space \cite{Christ}; or, rather, the 
easy part of it. We refine it, in the more specialized Ahlfors-regular case, to make room for the singular 
measures which are necessary in potential theory. Christ's construction can be thought of as a  \it graph \rm $G$ 
representing the geometry of $X$ at different scales. In particular, we identify $X$ with the bi-Lipschitz 
image of the graph's boundary, a result which might have independent interest. The tree $T$ used by Christ 
in his study of singular integrals is a \it spanning tree \rm for the graph $G$. A construction closely
related to ours was carried out in \cite{BoPa} \S2. We thank the anonymous referee for directing our 
attention on this and other references. 

The second tool, which is nedded if $X$ is not homeomorphic to a subset of the real line, 
is a technical lemma showing that measures can be moved back and forth from the 
metric space $X$ to the boundary of its associated tree. We must define the (non-canonical) pullback of a measure and 
this poses a nontrivial measurability problem.  The third tool, useful to deal with the nonlinear case, is a deep 
inequality by Muckenhoupt and Wheeden \cite{MW}, later independently proved by Wolff \cite{HW} with a wholly 
different, almost combinatorial argument. The Muckenhoupt-Wheeden-Wolff inequality, in fact, moves potentials 
from the space $X$ to the graph $G$, where it is easy to see that the ``vertical edges'', those which define the 
tree structure, carry all the relevant quantitative information. The inequality of Muckenhoupt, Wheeden and Wolff 
extends the well known equivalence, for subsets of the real line, of planar logarithmic capacity and one-dimensional 
$1/2$-Bessel capacity. In the general case, this equivalence can be stated in terms of Wolff 
potentials.

In order to better focus on the main ideas, in this article we only consider the case of \it bounded, \rm complete 
Ahlfors-regular spaces.  The theory could be extended to the unbounded case by choosing Bessel-type potentials 
with exponential decay at infinity, in order to control the tail estimates. A good source for this kind of extension 
is \cite{KV}.

We finish with a comment about the bibliography. We have made references to articles and books where the results 
we needed or we made reference to could be found. We did not look for the primary source of the results we have 
quoted and we have probably omitted some important references. This is especially true for the section concerning 
potential theory on trees. Many of the results which we prove in Section \ref{pttII} can be found in the literature, 
often independently proved by several authors at different times, including the authors of the present paper, with 
different degrees of generality; or they might be seen as particular cases of general results in one of the several 
axiomatic versions of Potential Theory. We have chosen to prove ourselves what we needed, trying to keep the 
exposition as self-contained as possible. There are two exceptions: the proof of Christ's Decomposition Theorem, for which
we refer to \cite{Christ}, and the basic properties of Axiomatic Potential Theory,  which we took from \cite{AH}. 
The main point of the paper, in fact, is not developing some new Potential Theory 
on Trees, but rather shedding light on the connection between discrete and non-discrete Potential Theory.
Some of our results on trees might nonetheless be new, especially those relating capacity and Carleson 
measures.

We thank the careful referee for pointing out a number of typos in the original draft of the paper and for 
his/her numerous suggestions on how to improve the presentation.

\subsection{Main Results and Outline of the Contents}
Let $(X, m,\rho)$ be a \it Ahlfors-regular metric measure space\rm. \it  By this we mean that 
$(X,\rho)$ is a complete metric space, $ m\ge0$ is a Borel measure on $X$, and there exist 
constants $0<c_1<c_2$, $Q>0$, such that, for all $r\ge0$ and $x\in X$:

\begin{equation}
 \label{ahlfors}
c_1r^Q\le  m(B(x,r))\le c_2 r^Q.
\end{equation}
Here, $B(x,r)=\{y\in X:\ \rho(x,y)<\rho\}$ is the metric ball of radius $r$, centered at $x$. 

In this article we are interested in local 
properties and we frequently will assume that $\mbox{diam}(X)<+\infty$. \rm 
In this case, (\ref{ahlfors}) is only required to hold for $0\le r\le \mbox{diam}(X)$. This special assumption
could be removed by assuming that the potential kernel $K$ below has exponential decay,
analogous to that of the Bessel kernels in $\RR^n$.

Given $0<s<1$, define the kernel $K:X\times X\to[0,\infty]$,
\begin{equation}\label{nucleo}
K(x,y)=[ m(B(x,\rho(x,y)))+ m(B(y,\rho(x,y)))]^{-s}\approx  m(x,\rho(x,y))^{-s}.
\end{equation}
When $X$ is an Ahlfors-regular, bounded domain in $\RR^n$, the kernel $K$ is a Riesz-Bessel
kernel:
$$
K(x,y)=\frac{1}{|x-y|^{sn}}.
$$
Given $1<p,p^\prime<\infty$, $\frac{1}{p}+\frac{1}{p^\prime}=1$, and a Borel measure $\omega\ge0$ on $X$,  consider the \it $p$-energy of $\omega$ \rm associated with the kernel $K$:

$$
\EE_X(\omega):=\int_X[K\omega(x)]^\pp d m(x),
$$
where $K\omega(x)=\int_XK(x,y)d\omega(y)$. The \it $p$-capacity \rm relative to the kernel $K$ 
associates to $E$, a compact subset of $X$, the nonnegative number

\begin{equation}\label{capacity}
\cpc_X(E)=\sup_{\supp(\omega)\subseteq E}\left[\frac{\omega(E)^p}{\EE_X(\omega)^{p-1}}\right].
\end{equation}

See \cite{AH} \cite{He} \cite{HaK} \cite{HeK} \cite{KV} for various approaches to nonlinear 
capacities in measure metric spaces. The kind of potential theory which is discussed in this 
paper is only interesting when the parameter of smoothness $s$ satisfies 
$\frac{1}{p^\prime}\le s<1$. When $0<s<\frac{1}{p^\prime}$, 
singletons have positive capacity.

We will show that capacities in Ahlfors-regular metric spaces can be estimated by similar 
capacities defined on a totally disconnected metric space.

\begin{theorem}\label{maine} To $(X, m,\rho)$ we can associate a tree $T$, 
a metric $\rho_T$ on $\partial T$ with respect to which $\partial T$, 
the boundary of $T$, is Ahlfors $Q$-regular, and a Lipschitz map
$$
\Lambda:\partial T\to X
$$
in such a way that, if $E$ is compact in $X$, then
\begin{equation}\label{mickey}
\cpc_X(E)\approx\cpc_{\partial T}(\Lambda^{-1}(E)).
\end{equation}

In the other direction we have that, if $F$ is compact in $\partial T$, then

\begin{equation}\label{mouse}
\cpc_X(\Lambda(F))\approx\cpc_{\partial T}(F).
\end{equation}
\end{theorem}

Here, $\cpc_{\partial T}$ is the capacity on the measure metric space $(\partial T,m_T,\rho_T)$, defined by the  same 
parameters used in defining the capacity on $X$. More precisely: $Q$ is the Hausdorff dimension of $\partial_T$ 
as well as that of $X$; $ m_T$ is the $Q$-Hausdorff measure on $\partial T$;  
$p$, the integrability exponent; $s$ is the smoothness parameter the parameter; 
the kernel used to define $\cpc_{\partial T}$ is
$$
K_T(x,y)=[ m_T(x,\rho_T(x,y))+ m_T(y,\rho_T(x,y))]^{-s}
$$
One of the main difficulties here is that the map $\Lambda$ is not one-to-one, therefore sets having positive capacity in 
$X$ have multiple preimages, possibly far away, in $\partial T$. This difficulty does not arise when $X$ is homeomorphic
 to a subset of the real line, since in this case the set of the points having multiple preimages is countable.

Let us mention an immediate consequence of Theorem \ref{maine} in the case of Euclidean space
$\RR^n$. The tree of the Euclidean dyadic cubes has a huge group of automorphisms, 
which are in turn isometries of the corresponding measure-metric structure 
$(T,\rho_T, m_T)$. Such automorphisms are generated by the operation of freely shuffling the $2^n$ cubes which are 
immediately below any given cube in the tree. \it The capacity of a set is essentially invariant under such
automorphism, \rm with multiplicative constants which are independent of the set and of the automorphism.
This invariance property is obviously much more general than invariance under isometries of the Euclidean space,
although not as sharp.
Invariance under tree automorphisms is not obvious, especially in several variables: in higher dimensions portions 
of the set having positive capacity might be doubled and moved apart one from the other, operations which generally 
increase capacity.

\smallskip

Here is an outline of the article. In Section \ref{sect} we construct the 
graph $G$ and the tree $T$ associated with $X$, and the surjection 
$\Lambda:\partial T\to X$. Among the properties of $\Lambda$ in the section, 
we show that the natural push-forward of measures $ m\mapsto\Lambda_* m$ has 
a (non-natural) right inverse. This allows us to move measures back and forth 
from $\partial T$ to $X$. 

In Section \ref{sectmw} we prove a version of the Muckenhoupt-Wheeden-Wolff inequality 
for potentials on graphs. The part of the inequality we need is that independently 
proved by T. Wolff. In Section \ref{sectcap} we show that measures on $\partial T$
and $X$, if they are correspondent under $\Lambda_*$ or its inverse map, have comparable 
energy. As a consequence, we prove Theorem \ref{maine}.  

Section \ref{pttII} presents the basic facts of potential theory on trees. First, we show that the 
Muckenhoupt-Wheeden-Wolff inequality establishes a correspondence between ``Bessel'' potential theory on $X$, 
hence on $\partial T$, and  ``logarithmic'' potential theory on $T\cup\partial T$. The latter is the sort 
of potential theory which has been 
independently developed by several authors over the past twenty years, an account of which is given in the rest of the 
section.  Potential theory on trees is simpler than its counterparts on general Ahlfors-regular spaces for several reasons. 
The main such reason is that capacities can be computed explicitly by means of recursive formulas, which are deduced in the 
present article in a rather general form. 
Moreover, in the context of trees, sets' boundaries are often trivial and scaling arguments are elementary 
and natural. As a consequence, the capacitary potential of a set $E$ is a simple object, whose geometry is linked to that of 
the set $E$ in a very transparent way.

Some of the material presented here has already appeared in the literature, while some of it is presented here for the first 
time.  We present, in particular, a deduction of the trace inequality for the potential $K$ from a much simpler dyadic 
``Carleson measure inequality'' on trees.  Then, we give a rather \it direct \rm proof that a testing condition known to 
be equivalent to the trace inequality implies a capacitary condition, also known to be equivalent to the trace inequality.
 This answers a question Maz'ya asked some of us some years ago.



For ease of the reader, in the Appendix 
we have translated in tree terms some basic results of Nonlinear Potential Theory, as they are presented in 
\cite{AH} (Sect. 2.3-2.4). 

\medskip

Our hope is that the simple tree model will be useful to researchers working in or using the results of potential theory. 
In this paper, we do not offer new applications of the equivalence between ``classical'' and tree capacities, except for a 
new proof of the trace inequalities in the Ahlfors-regular case. We hope to return on applications in other papers and that 
other people will find useful the tool we have here developed.

\subsection{Some Examples of Ahlfors-Regular Metric Spaces.} We end the introduction by mentioning a  few, 
often not independent, examples of the spaces to which the theory developed in the present paper applies.   
More material, at a much deeper level, can be found, for example, in \cite{He}.

\begin{enumerate}

\item Euclidean $\RR^n$ and the $n$-dimensional sphere $\Sigma_n$ are $n$-dimensional Ahlfors-regular spaces.

\item Both the closed and open balls in Euclidean space $\RR^n$ are $n$-regular Ahlfors spaces.

\item The ternary Cantor set $C$ with the metric inherited from the real line is Ahlfors-regular with 
$Q=\frac{\log2}{\log3}$.

\item If $(X,\rho, m)$ is Ahlfors $Q$-regular, the measure $ m$ can be replaced by the $Q$-Hausdorff measure 
$H^Q_\rho$ in $(X,\rho)$. The metric measure structure, that is, can be reduced to the metric structure alone.  

\item The ``snowflake metric'' $\rho(x,y)=|x-y|^{1/2}$ makes the real line into an Ahlfors $2$-regular space.

\item Carnot groups having homogeneous dimension $Q$ with their Carnot metrics are Ahlfors $Q$-regular. 
See \cite{BLU} for a comprehensive introduction to the topic.

\item An $n$-dimensional, complete Riemannian manifold with nonnegative, bounded curvature is Ahlfors 
$n$-regular.

\item The unit sphere in $\CC^n$ endowed with the Koranyi metric $\rho(z,w)=|1-z\cdot\overline{w}|^{1/2}$ is an 
Ahlfors-regular space having dimension $2n+1$.

\item If $(X,\rho, m)$ is Ahlfors $Q$-regular and $Q_\alpha$ is one of the dyadic boxes in $X$'s dyadic 
decomposition (see Christ's 
Theorem in the next section), then $\overline{Q}_\alpha$, the closure of $Q_\alpha$ in $X$, is $Q$-regular.  
See Theorem \ref{subdyadic}.

\item The boundary of any homogeneous tree (except $\ZZ$) with respect to the Gromov distance is a regular Ahlfors space.
As before, the measure considered here is the $Q$-Haudorff measure, where $Q$ is the Hausdorff dimension of $\partial T$.

\end{enumerate}

\paragraph{Notation.} \it If $A(P_1,\dots,P_n)$ and $B(P_1,\dots,P_n)$ are two positive, or positively infinite, quantities 
depending on the objects $P_1,\dots,P_n$, we write $A\approx B$ if there are constants $0<C_1<C_2$, 
independent of $P_1,\dots,P_n$, s.t $C_1 A\le B\le C_2 A$. We write $A\lesssim B$ if there is a constant $C>0$ such 
that $A\le CB$. We will denote by $c$ a positive constant which might change value within the same expression or 
calculation.

\rm

\section{A Metric Space, a Graph, and a Tree}
\label{sect} 

\it

In this section we consider the discretization of the Ahlfors-regular space $X$ at different scales by means of metric
``dyadic boxes`` as proved by
 M. Christ. While we only need the \rm easy \it part of Christ's Theorem, we have to be careful: having to deal with Borel 
measures, we can not discard sets having null $ m$-measure.
The difficult
 part of Christ's is an estimate of the mass concentrated near the boundary of the dyadic boxes.
 Christ's Theorem can be interpreted as the construction of a 
\bf graph \it $G$, having  $X$ as boundary.  The vertices of the graph are dyadic boxes, which generalize to the 
context of Ahlfors-regular metric spaces the dyadic decomposition of Euclidean space: each dyadic box is partitioned 
into a number of smaller dyadic boxes, which we might call its ``children``.  The edges are
either ``horizontal'' or ``vertical''. Horizontal edges join nearly adjacent dyadic boxes having comparable diameter, while
a vertical edge joins a dyadic box with its ``parent`` dyadic box.
  
To better clarify this point we bi-Lipschitz modify the original distance 
$\rho$ to $\overline{\rho}$, a distance which extends to a length-distance on $\overline{G}:=G\cup X$.  We then consider 
the \bf tree structure \it $T$ of the dyadic boxes: $T$ and $G$ have the same vertices, but $T$ only preserves the vertical 
edges. The boundary $\partial T$ of $T$ is ``larger'' than $X$, and can be thought of as a Cantor set. We construct an onto,
Lipschitz map $\Lambda:\overline{T}\to\overline{G}$, mapping boundaries to boundaries and such that 
$\Lambda(\partial T)=X$. The 
main point of this section consists in proving that the push-forward of measures $\Lambda_*\omega:=\omega\circ\Lambda$ 
has a (non-canonical) left inverse with nice properties. The main technical difficulty consists in proving that $\Lambda$ 
maps Borel sets to Borel sets or, rather, a generalization of this fact. Once all this is done, we can move Borel measures 
back and forth through $\Lambda$.

\rm

\subsection{Christ's Theorem Revisited}

Let $(X, m,\rho)$ be an Ahlfors $Q$-regular metric measure space. We denote the balls by $B(x,r)=\{y\in X:\ \rho(y,x)<r\}$. 
\it In this section we do not assume $X$ to be bounded. \rm

In particular, $(X,\rho)$ is a \it homogeneous space \`a la Coifman-Weiss \rm: the measure $ m$ satisfies the doubling 
condition
$$
 m(B(x,2r))\le c_3 m(B(x,r)).
$$

It is well known (see, e.g., Semmes' essay in \cite{Gro}) that we can take $ m$ to be the $Q$-dimensional Hausdorff measure 
for the metric space $(X,\rho)$. Namely, if $(X,m,\rho)$ is Ahlfors $Q$-regular and ${\mathcal H}^Q_\rho$ is the 
$Q$-dimensional Hausdorff measure for the distance $\rho$, then $(X, {\mathcal H}^Q_\rho, \rho)$
is Ahlfors $Q$-regular and there is $A>0$ such that
$$
A^{-1} m(E)\le{\mathcal H}^Q_\rho(E)\le m(E)
$$
for all Borel measurable sets in $X$.

We shall also assume, in the main body of the paper, that $X$ is bounded, $diam(X)\le 1$, so that (\ref{ahlfors}) only is 
required to hold when $r\le1$. Under the sole hypothesis that $(X,\rho, m)$ is a homogeneous space, M. Christ \cite{Christ} 
proved that $X$ admits a ``dyadic'' decomposition. We state here \it the easy part \rm of Christ's Theorem. 

\begin{thma}\label{christus}[M. Christ]

There exists a collection $\{Q^k_\alpha,\ \alpha\in I_k,\ k\in\ZZ\}$ of open subsets of $X$ and $\delta>0$, $a_0>0$, $c_4>0$ such that

\begin{enumerate}

 \item[(i)] $ m(X\setminus\cup_{\alpha\in I_k}Q^k_\alpha)=0$ holds for all $k\ge0$; 

\item[(ii)] if $l\ge k$ then, for all $\alpha\in I_k$ and $\beta\in I_{l}$, 
either $Q^l_\beta\subseteq Q^k_\alpha$, or $Q^l_\beta \cap Q^k_\alpha=\emptyset$;

\item[(iii)] for all $(l,\beta)$ and $l>k$, there is a unique $\alpha$ in $I_k$ such that $Q^l_\beta\subseteq Q^k_\alpha$;

\item[(iv)] $diam(Q^k_\alpha)\le c_4 \delta^k$;

\item[(v)] for all $(k,\alpha)$, $B(z_\alpha^k,\delta^k)\subseteq Q^k_\alpha$, for distinguished points $z^k_\alpha\in Q^k_\alpha$.

\end{enumerate}

\end{thma}

If $X$ is bounded, after rescaling the distance 
we can assume that $k\in\NN$ and that $I_0$ contains a unique element $o$: $Q^0_o=X$.
By analogy with the case of Euclidean space, we will call the sets $Q_{\alpha}^k$ or their 
closures \it dyadic sets \rm or \it qubes. \rm

We take the sets $\alpha\equiv(k,\alpha)\leftrightarrow Q^k_\alpha$ themselves as points of a new space $T$. When we do not want to stress the level of $\alpha$, we simply write $Q_\alpha$ instead of $Q_\alpha^k$. 

For $\alpha\in I_k$, we set $k=d(\alpha)$ to be the level of $\alpha$. The set $T$ is given a \it tree structure\rm: there is an edge of the tree between $\alpha$ and $\beta$ if $d(\beta)=d(\alpha)+1$ and $\beta\subseteq\alpha$, possibly interchanging the 
r\^oles of $\alpha$ and $\beta$. The tree $T$ has a natural, edge-counting distance $d$, which is realized by geodesics. We introduce the partial order: $\alpha\le\beta$ in $T$ if $\alpha\in[o,\beta]$, the geodesic joining $o$ and $\beta$.

We also introduce on $T$ a further \it graph structure\rm: two distinct points $\alpha$ and $\beta$ in $T$ are connected by an edge 
of the graph $G$ if they are already connected by an edge of $T$, or if $d(\alpha)=d(\beta)=k$ and there are points 
$x\in\alpha$ and $y\in\beta$ such that $\rho(x,y)\le\delta^k$. In this case, we write 
$\alpha\genfrac{}{}{0pt}{}{\sim}{G}\beta$. 

The natural edge-counting distance in $G$ is denoted by $d_G$. It is realized by geodesics, but, contrary to the tree case, 
there might be several geodesics joining two points.  

We identify $T=G$ as sets of vertices, and use different names when 
dealing with different edge structures. It would be more elegant to use two names for the 
different edge sets and one third name for the common vertex set. 
We have here sacrified elegance to economy, more in line with the dialect of ``dyadic'' analysts than
with the language of graph theorists.

In Graph Theory it is said that $T$ is a \it spanning tree \rm for the graph $G$.

Since we are interested in possibly singular measures on $X$, we have to refine Christ's construction a 
little in order to 
have regions whose union gives us back the whole space $X$. This is done in the following lemmas.

\begin{lemma}
 \label{euno} 
For each $k$, we have that $\cup_{\alpha\in I_k}\overline{Q^k_\alpha}=X$.
\end{lemma}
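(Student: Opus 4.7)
The plan is to combine the full-measure property (i) with Ahlfors regularity to get density, and then use a local-finiteness/pigeonhole argument to upgrade density to membership in a single closure.

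First I would show that $U_k := \cup_{\alpha \in I_k} Q^k_\alpha$ is dense in $X$. This is immediate from (i): any ball $B(x,r) \subseteq X$ has $m$-measure at least $c_1 r^Q > 0$ by \eqref{ahlfors}, while $m(X \setminus U_k) = 0$, so $B(x,r) \cap U_k \ne \emptyset$. Hence every $x \in X$ is the limit of some sequence $y_n \to x$ with $y_n \in Q^k_{\alpha_n}$ for some indices $\alpha_n \in I_k$.

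The issue is that density only puts $x$ in $\overline{U_k} = \overline{\cup_\alpha Q^k_\alpha}$, which a priori could be strictly larger than $\cup_\alpha \overline{Q^k_\alpha}$ when the cover is not locally finite. So I would next argue that only finitely many distinct indices occur among the $\alpha_n$. By (iv), $Q^k_{\alpha_n} \subseteq B(y_n, c_4 \delta^k) \subseteq B(x, 1 + c_4 \delta^k)$ for $n \geq 1$. By (v), each $Q^k_{\alpha_n}$ contains a ball $B(z^k_{\alpha_n}, \delta^k)$, and these interior balls are pairwise disjoint by (ii) when the $\alpha_n$ are distinct. Each such interior ball has $m$-measure at least $c_1 \delta^{kQ}$ by \eqref{ahlfors}, while their union sits inside a ball of bounded radius around $x$ whose total measure is at most $c_2(1 + 2 c_4 \delta^k)^Q$. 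This bounds the number of distinct $\alpha_n$ by a constant depending only on $k$ and the Ahlfors data.

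By the pigeonhole principle some index $\alpha \in I_k$ equals $\alpha_{n_j}$ for a subsequence, giving $y_{n_j} \in Q^k_\alpha$ with $y_{n_j} \to x$, hence $x \in \overline{Q^k_\alpha}$. There is no real obstacle here; the only point requiring care is the local-finiteness step, which is purely a volume-counting consequence of (iv), (v), and \eqref{ahlfors}. This same argument actually applies verbatim whether or not $X$ is bounded.
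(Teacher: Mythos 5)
Your proof is correct and follows essentially the same route as the paper's: density of $\cup_\alpha Q^k_\alpha$ from the full-measure property (i) together with the lower Ahlfors bound on balls, then a volume-counting argument using (iv), (v) and \eqref{ahlfors} to show only boundedly many cubes of generation $k$ can meet a fixed neighborhood of $x$, forcing $x$ into the closure of one of them. The paper merely phrases the same idea as a contradiction (producing infinitely many disjoint cubes of measure $\ge c_1\delta^{Qk}$ inside a bounded ball) rather than as a direct pigeonhole, and it likewise notes that boundedness of $X$ is not needed in this section.
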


\proof
By contradiction, suppose that $x$ lies in $X\setminus\cup_{\alpha\in I_k}\overline{Q^k_\alpha}$. Then, any ball  
$B(x,\epsilon)$ intersects $Q^k_\alpha$ for infinitely many $\alpha$ in $I_k$. In fact, since $ m(B(x,\epsilon))>0$, 
by (i) in Theorem A and the assumption that open sets have, by Ahlfors-regularity, positive measure,  
there exists $\alpha_1$ in $I_k$ such that $B(x,\epsilon)\cap Q^k_{\alpha_1}\ne\emptyset$. 
There must be some $0<\epsilon_2<\epsilon=\epsilon_1$ such that $B(x,\epsilon_2)\cap Q^k_{\alpha_1}=\emptyset$, 
otherwise $x\in\overline{Q^k_{\alpha_1}}$, a contradiction. As before, there must be then $\alpha_2\ne\alpha_1,\alpha$ 
such that $B(x,\epsilon_2)\cap Q^k_{\alpha_2}\ne\emptyset$.

Iterating this procedure, we find a sequence $Q^k_{\alpha_n}$ of distinct
regions at generation $k$. They have to be disjoint by (ii) in Christ's Theorem. 
Also, they are contained in the ball $B(x,\epsilon+2c_4\delta^k)$, by (iv), and each of 
them has volume at least $c_1\delta^{Qk}$, by Ahlfors-regularity (\ref{ahlfors}). Hence,
$$
+\infty=\sum_n m(Q^k_{\alpha_n})= m(\cup_nQ^k_{\alpha_n})\le m(B(x,\epsilon+2c_4\delta^k))<+\infty,
$$
a contradiction.
\endproof

Lemma \ref{euno} is trivial if $\text{diam}(X)<\infty$, having in this case $I_k$ finitely many elements only.
From the lemma, the fact that the regions $Q^k_\alpha$ are open, and the regularity of the measure $m$, we deduce 
immediately the following.
\begin{corollary}
\label{eleffe}
Let $F^k=\cup_{\alpha\in I_k}\partial Q^k_\alpha$ and $F=\cup_k F^k$. Then, each $F^k$ is closed,
$F^k\subseteq F^{k+1}$ and $ m(F)=0$.
\end{corollary}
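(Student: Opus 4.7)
The plan is to prove the three assertions in order, with everything hinging on the observation that $F^k$ is exactly the complement in $X$ of the open set $\bigcup_{\alpha \in I_k} Q^k_\alpha$. To verify this characterization, I would first note that the cubes $\{Q^k_\alpha\}_{\alpha\in I_k}$ are pairwise disjoint: applying property (ii) of Theorem \ref{Christ_Thm} with $l=k$ to two distinct indices yields either containment or disjointness, and mutual containment (by symmetry) would force the sets to coincide, contradicting distinctness of the indices. Hence if $x \in Q^k_\alpha$, then $x$ has an open neighborhood contained in $Q^k_\alpha$ and disjoint from every $Q^k_\beta$ with $\beta \ne \alpha$; in particular $x$ lies in no boundary $\partial Q^k_\beta$. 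Conversely, if $x \notin \bigcup_\alpha Q^k_\alpha$, then Lemma \ref{euno} provides some $\alpha$ with $x \in \overline{Q^k_\alpha}$, and since $x \notin Q^k_\alpha$ we conclude $x \in \partial Q^k_\alpha \subseteq F^k$. This yields $F^k = X \setminus \bigcup_{\alpha \in I_k} Q^k_\alpha$, which is closed as the complement of an open set, establishing (i).

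For (ii), I would pick $x \in F^k$, so $x$ lies in no open cube of level $k$. By Lemma \ref{euno} at level $k+1$ there exists $\beta \in I_{k+1}$ with $x \in \overline{Q^{k+1}_\beta}$. If $x$ belonged to the open set $Q^{k+1}_\beta$ itself, property (iii) of Theorem \ref{Christ_Thm} would furnish $\alpha \in I_k$ with $Q^{k+1}_\beta \subseteq Q^k_\alpha$, forcing $x \in Q^k_\alpha$ and contradicting $x \in F^k$. Therefore $x \in \overline{Q^{k+1}_\beta} \setminus Q^{k+1}_\beta = \partial Q^{k+1}_\beta \subseteq F^{k+1}$.

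Claim (iii) is then immediate: property (i) of Theorem \ref{Christ_Thm} combined with the characterization $F^k = X \setminus \bigcup_\alpha Q^k_\alpha$ gives $m(F^k) = 0$ for every $k \ge 0$, and since $F = \bigcup_k F^k$ is a countable union of $m$-null sets, $m(F)=0$.

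I expect no serious obstacle here; the only delicate point is the initial characterization of $F^k$ as the complement of the union of open cubes, which requires combining the openness and pairwise disjointness of the cubes at a fixed level with Lemma \ref{euno} in order to rule out that an interior point of one cube could simultaneously sit on the boundary of a different cube at the same level.
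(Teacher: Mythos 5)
Your proof is correct, and since the paper states this as a ``simple corollary'' of Lemma \ref{euno} without writing out an argument, your write-up simply supplies the evident intended proof: the identification $F^k = X\setminus\bigcup_{\alpha\in I_k}Q^k_\alpha$ via disjointness of the level-$k$ cubes and Lemma \ref{euno}, nesting via property (iii) of Christ's Theorem, and nullity via property (i). No issues.
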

\begin{proof}
We first show that $F^k$ is closed. 
Let $x_n\in \partial Q_{\alpha_n}^k$, $x_n\to x$ in $X$. By Lemma \ref{euno}, if $x\notin F^k$, then $x\in Q_{\alpha}^k$
for some $\alpha\in I_k$, but $Q_\alpha^k$ is open and the the $Q_\beta^k$'s are disjoint, hence $x_n\notin F^k$ for large 
$n$:
a contradiction.

The inclusion $F^{k}\subseteq F^{k+1}$ easily follows from Lemma \ref{euno} and (iii) in Christ's Theorem. 

Assertion (i) in Theorem \ref{christus} implies that $m(F^k)=0$, and $m(F)=0$ by regularity of the measure $m$.
\end{proof}

\begin{lemma}
\label{edue}
\hfill
\begin{itemize}
\item[(i)] There exists $c_5$ independent of $k\ge0$ and of $x\in X$ such that 
$\sharp\{\alpha\in I_k:\ x\in \overline{Q^k_\alpha}\}\le c_5$;\\ 
\item[(ii)] There exists $c_6$ independent of $\alpha$ in $G$ such that  
$\sharp\{\beta\in G:\ \beta\genfrac{}{}{0pt}{}{\sim}{G}\alpha\}\le c_6$: the graph $G$ has 
{\bf bounded degree}.
\item[(iii)] In particular, the number of $\beta\in I_{k+1}$ such that $Q_\beta^{k+1}$ is contained in a fixed $Q_\alpha^k$
is bounded by a constant $c$ which only depends by the structural constants of the metric space $(X,\rho)$.
\end{itemize}
\end{lemma}
\begin{proof}
It is clear that (i) is a consequence of (ii). In fact, all $\alpha\in I_k$ for which $x$ lies in $\overline{Q_\alpha^k}$  
are $G$-related.

Fix $x\in Q^k_\alpha$ and suppose that $\beta\genfrac{}{}{0pt}{}{\sim}{G}\alpha$. Consider first the case 
$\beta\in I_k$.
There is then a constant $c$ such that, 
if $\rho(Q^k_\alpha, Q^k_\beta)\le\delta^Q$, then $Q^k_\beta\subseteq B(x,c\delta^k)$. Now, using Ahlfors $Q$-regularity
and items (iv), (v) of Christ's Theorem in the first inequality,
\begin{eqnarray*}
 \delta^{Qk}\cdot\sharp\{\beta\in I_k:\ Q^k_\beta\cap B(x,\delta^k)\ne\emptyset\}
&\le&c\sum_{Q^k_\beta\cap B(x,\delta^k)\ne\emptyset} m(Q^k_\beta)\crcr
&=& m(\cup_{Q^k_\beta\cap B(x,\delta^k)\ne\emptyset}Q^k_\beta)\crcr
&\le& m(B(x,c\delta^k))\le c\delta^{Qk},
\end{eqnarray*}
which implies (ii) under the restriction $\beta\in I_k$. The general case easily follows from the special one 
and Ahlfors-regularity. 
\end{proof}

\begin{lemma}\label{localizing}
 If $\alpha\in I_k$ and $h\ge0$, then
$$
\overline{Q}_\alpha=\cup_{\beta\ge\alpha,\ \beta\in I_{k+h}}\overline{Q}_\beta.
$$
\end{lemma}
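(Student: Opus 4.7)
The plan is to prove the two inclusions separately. The inclusion $\overline{Q}_\alpha\supseteq\cup_{\beta\ge\alpha,\ \beta\in I_{k+h}}\overline{Q}_\beta$ is immediate: by property (ii) of Theorem \ref{Christ_Thm}, $\beta\ge\alpha$ in $T$ (which is the same as $Q_\beta\subseteq Q_\alpha$ since $d(\beta)>d(\alpha)$), hence $\overline{Q}_\beta\subseteq\overline{Q}_\alpha$, and the union of such closures remains inside $\overline{Q}_\alpha$.

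The substantive direction is $\overline{Q}_\alpha\subseteq\cup_{\beta\ge\alpha,\ \beta\in I_{k+h}}\overline{Q}_\beta$. Fix $x\in\overline{Q}_\alpha$. The strategy is to approximate $x$ by a sequence in $Q_\alpha$ that avoids the measure-zero ``boundary'' set $F^{k+h}=\cup_{\gamma\in I_{k+h}}\partial Q^{k+h}_\gamma$ of Corollary \ref{eleffe}, and then to apply a pigeonhole argument using the fact that $\alpha$ has only finitely many descendants at depth $h$. Specifically, since $Q_\alpha$ is open, $x\in\overline{Q}_\alpha$, and $m(F^{k+h})=0$, every ball $B(x,1/n)$ has $m(B(x,1/n)\cap Q_\alpha)>0$ by Ahlfors-regularity, hence $m((B(x,1/n)\cap Q_\alpha)\setminus F^{k+h})>0$. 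Thus we can select $x_n\in Q_\alpha\setminus F^{k+h}$ with $x_n\to x$.

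Each $x_n$, being outside $F^{k+h}$, lies in some open cube $Q^{k+h}_{\gamma(n)}$. Since the two cubes $Q^k_\alpha$ and $Q^{k+h}_{\gamma(n)}$ meet (both contain $x_n$), property (ii) of Christ's Theorem forces $Q^{k+h}_{\gamma(n)}\subseteq Q^k_\alpha$, that is, $\gamma(n)\ge\alpha$. Now the set of descendants of $\alpha$ at level $k+h$ is finite: their disjoint open sets $Q^{k+h}_\gamma$ are contained in $Q^k_\alpha$, hence by Ahlfors-regularity
$$
\sharp\{\gamma\in I_{k+h}:\ \gamma\ge\alpha\}\cdot c_1\delta^{(k+h)Q}\le\sum_{\gamma\ge\alpha}m(Q^{k+h}_\gamma)\le m(Q^k_\alpha)\le c_2(c_4\delta^k)^Q,
$$
giving a bound $\lesssim\delta^{-hQ}$. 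By pigeonhole, some $\beta\ge\alpha$ in $I_{k+h}$ equals $\gamma(n)$ for infinitely many $n$, so along a subsequence $x_n\in Q_\beta$ and $x_n\to x$, yielding $x\in\overline{Q}_\beta$.

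The main (and in fact only nontrivial) obstacle is the possibility that $x$ is approximated only by points in $Q_\alpha$ that happen to land on the boundaries $\partial Q^{k+h}_\gamma$; the device for avoiding this is the combination of $m(F^{k+h})=0$ (Corollary \ref{eleffe}) with the positivity of measure on open neighborhoods forced by \eqref{ahlfors}. Everything else is a routine consequence of the nested structure in Theorem \ref{Christ_Thm} and the finiteness of descendants.
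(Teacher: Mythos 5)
Your proof is correct and rests on exactly the same ingredients as the paper's: the finiteness of the set of descendants of $\alpha$ at level $k+h$, the fact that the union of the boundaries $\partial Q^{k+h}_\gamma$ is $m$-null (Corollary \ref{eleffe} together with (i) of Theorem \ref{Christ_Thm}), and Ahlfors-regularity to guarantee that nonempty open sets have positive measure. The paper packages the endgame slightly differently---since the finite union of closures is closed, it suffices to show it is dense in $Q_\alpha$, and a ball of positive measure in the complement gives $m(Q_\alpha)>\sum_{\beta\ge\alpha}m(Q_\beta)$, a contradiction---whereas you build an approximating sequence off the null set and extract a constant subsequence by pigeonhole, but the two arguments are essentially the same.
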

\begin{proof} 
Clearly, $\overline{Q}_\alpha\supseteq\cup_{\beta\ge\alpha,\ \beta\in I_{k+h}}\overline{Q}_\beta.$  In the other 
direction, since the $Q_\beta$'s are finitely many, it suffices to show that
$\cup_{\beta\ge\alpha,\ \beta\in I_{k+h}}\overline{Q}_\beta$ is dense in $Q_\alpha$. 
If this were not the case, there would be a metric ball $B(z,\epsilon)$ in 
$Q_\alpha\setminus\left[\cup_{\beta\ge\alpha,\ \beta\in I_{k+h}}\overline{Q}_\beta\right]$, hence, by Ahlfors-regularity,
$$
 m(Q_\alpha)>\sum_{\beta\ge\alpha,\ \beta\in I_{k+h}} m(Q_\beta),
$$
which, (i) and (iii) in the Theorem of Christ, contradicts Corollary \ref{eleffe}.
\end{proof}

Complete Ahlfors-regular spaces are not especially exotic among metric spaces.
\begin{corollary}
\label{heineborel}
$(X,\rho)$ is locally compact.
\end{corollary}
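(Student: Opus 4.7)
The plan is to establish local compactness by showing that every closed ball of small radius is compact. Since $(X,\rho)$ is complete by assumption, it suffices to prove that such closed balls are totally bounded, by the standard characterization of compactness in complete metric spaces.

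Fix $x_0 \in X$ and $r>0$. To verify total boundedness of $\overline{B(x_0,r)}$, I would, given $\varepsilon>0$, choose an integer $k$ large enough so that $c_4 \delta^k < \varepsilon$. By Lemma \ref{euno}, the closures $\{\overline{Q^k_\alpha}\}_{\alpha \in I_k}$ cover $X$, and by property (iv) of Theorem \ref{Christ_Thm} each such closure has diameter at most $c_4 \delta^k < \varepsilon$. So it remains only to show that the collection $\mathcal{A} = \{\alpha \in I_k : \overline{Q^k_\alpha} \cap \overline{B(x_0,r)} \neq \emptyset\}$ is finite.

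For each $\alpha \in \mathcal{A}$, the cube $Q^k_\alpha$ is contained in $B(x_0, r + 2c_4\delta^k)$ (using the diameter bound (iv) again). By Ahlfors regularity,
\[
m\bigl(B(x_0, r + 2c_4\delta^k)\bigr) \le c_2 (r + 2c_4\delta^k)^Q.
\]
On the other hand, the cubes $\{Q^k_\alpha\}_{\alpha \in I_k}$ are pairwise disjoint, and property (v) of Theorem \ref{Christ_Thm} together with Ahlfors regularity gives $m(Q^k_\alpha) \ge c_1 \delta^{Qk}$ for every $\alpha \in I_k$. Therefore
\[
\# \mathcal{A} \;\le\; \frac{c_2(r + 2c_4\delta^k)^Q}{c_1 \delta^{Qk}} \;<\; \infty,
\]
so $\overline{B(x_0,r)}$ is covered by finitely many sets of diameter less than $\varepsilon$, proving total boundedness. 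Completeness then upgrades total boundedness to compactness of $\overline{B(x_0,r)}$, and local compactness of $X$ follows.

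There is no serious obstacle here; the only subtlety is being careful to use property (v) (each cube contains a ball $B(z^k_\alpha,\delta^k)$) in order to get a uniform lower bound on $m(Q^k_\alpha)$, since the cubes themselves are only known to have controlled diameter from above.
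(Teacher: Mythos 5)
Your proof is correct and is precisely the argument the paper has in mind: the paper only sketches the corollary (``mimic the proof of the Heine--Borel Theorem, using the dyadic decomposition of Christ \dots and the completeness of $X$''), and your packing argument via Lemma \ref{euno}, properties (iv)--(v) of Theorem \ref{Christ_Thm}, and Ahlfors regularity is the same counting device the paper itself uses in the proof of Lemma \ref{euno}. Nothing further is needed.
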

\begin{proof}
Mimic the proof of the Heine-Borel Theorem in \cite{Ru}, p.38-40 using the dyadic decomposition of Christ instead 
of the usual dyadic decomposition of $\RR^n$, and the metric completeness of $X$ instead of the completeness of $\RR$.
\end{proof}

\begin{corollary}
\label{tonite}
$(X,\rho)$ is separable.
\end{corollary}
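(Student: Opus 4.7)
My plan is to exhibit an explicit countable dense subset of $X$ by using the distinguished centers furnished by Christ's decomposition (Theorem \ref{Christ_Thm}(v)). Set
$$
D := \{z^k_\alpha : k \in \NN,\ \alpha \in I_k\}.
$$
Two things must be verified: that $D$ is countable, and that $D$ is dense in $(X,\rho)$.

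Density is the easy half. Given $x \in X$ and $\epsilon > 0$, choose $k$ large enough that $c_4 \delta^k < \epsilon$. By Lemma \ref{euno} there exists $\alpha \in I_k$ with $x \in \overline{Q^k_\alpha}$, and combining $z^k_\alpha \in Q^k_\alpha$ with the diameter bound in Theorem \ref{Christ_Thm}(iv) gives
$$
\rho(x, z^k_\alpha) \le \operatorname{diam}(Q^k_\alpha) \le c_4 \delta^k < \epsilon.
$$

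The main (though elementary) obstacle is the countability of $D$, which I plan to handle by a volume-packing argument that avoids any circular appeal to separability. Fix a basepoint $x_0 \in X$; since $\rho$ takes only finite values, $X = \bigcup_{n \in \NN} B(x_0,n)$, so it suffices to show that for each $k$ and each $n$ only finitely many $\alpha \in I_k$ satisfy $Q^k_\alpha \cap B(x_0,n) \ne \emptyset$. By Theorem \ref{Christ_Thm}(ii) and (v) the balls $B(z^k_\alpha,\delta^k)$ are pairwise disjoint, and each $\alpha$ with $Q^k_\alpha \cap B(x_0,n) \ne \emptyset$ yields one such ball lying inside $B\bigl(x_0,\,n+(c_4+1)\delta^k\bigr)$, by a triangle-inequality application of (iv). The Ahlfors-regularity bounds \eqref{ahlfors} then provide a uniform upper estimate on their number, roughly $\lesssim \bigl(n/\delta^k + c_4 + 1\bigr)^Q$. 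Taking the countable union over $n$ and $k$ yields countability of $D$ and completes the argument.
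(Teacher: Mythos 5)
Your proposal is correct and follows essentially the same route as the paper: both arguments take the distinguished centers $z^k_\alpha$ of Christ's decomposition as the candidate dense set and rely on the fact that only finitely many boxes of a given generation can meet a fixed bounded region (a volume-packing consequence of \eqref{ahlfors} and the disjointness of the balls $B(z^k_\alpha,\delta^k)$). The only difference is presentational: the paper localizes to each $\overline{Q}_\alpha$ and invokes total boundedness tersely, whereas you make the countability count explicit globally, which is a welcome filling-in of detail rather than a new idea.
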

\begin{proof}
It suffices to show that each $\overline{Q}_\alpha$ is separable.  
By Christ Theorem, for each integer $n>0$, there are finitely many points 
$\{z^n_j:\ j=1,\dots,k_n\}$ in $\overline{Q}_\alpha$ such that any point 
$x$ in $\overline{Q}_\alpha$ lies at distance at most $2^{-n}$ from some $z^n_j$. 
The countable set $\{z^n_j:\ j=1,\dots,k_n,\ n\ge1\}$ is dense in $\overline{Q}_\alpha$.
\end{proof}

\begin{theorem}
\label{subdyadic}
Each set $X_\alpha=\overline{Q}_{\alpha}$ is Ahlfors $Q$-regular. Moreover, the dyadic sets in Christ's 
decomposition of $X_\alpha$ might be taken to be the sets $Q_\beta$ where $\beta\ge\alpha$. 
\end{theorem}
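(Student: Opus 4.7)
The plan is to verify the two assertions separately. Ahlfors regularity of $X_\alpha$ will follow from a careful choice of scale; the validity of $\{Q_\beta : \beta \geq \alpha\}$ as a Christ decomposition of $X_\alpha$ will be essentially inherited from the decomposition of $X$.

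For the Ahlfors regularity, I would fix $\alpha \in I_k$, $x \in X_\alpha$, and $0 < r \leq \mathrm{diam}(X_\alpha)$. The upper bound $m(B(x,r) \cap X_\alpha) \leq m(B(x,r)) \leq c_2 r^Q$ is immediate from the Ahlfors regularity of $X$. For the lower bound, I let $l$ be the smallest integer with $c_4 \delta^l < r$. Since $r \leq \mathrm{diam}(X_\alpha) \leq c_4 \delta^k$, this forces $l \geq k+1$, and the minimality of $l$ gives $\delta^{l-1} \geq r/c_4$, i.e.\ $\delta^l \geq \delta r/c_4$. Lemma \ref{localizing} then produces some $\beta \geq \alpha$ in $I_l$ with $x \in \overline{Q}_\beta$. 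By property (iv) of Theorem \ref{Christ_Thm}, every point of $\overline{Q}_\beta$ lies within distance $c_4 \delta^l < r$ of $x$, so $\overline{Q}_\beta \subseteq B(x,r)$; and $\overline{Q}_\beta \subseteq \overline{Q}_\alpha = X_\alpha$ because $\beta \geq \alpha$. Using property (v) (and Corollary \ref{eleffe} to identify $m(Q_\beta)$ with $m(\overline{Q}_\beta)$), one concludes
\[
m(B(x,r) \cap X_\alpha) \;\geq\; m(Q_\beta) \;\geq\; c_1 \delta^{Ql} \;\geq\; c_1 (\delta/c_4)^Q\, r^Q,
\]
completing the regularity estimate with constants depending only on $c_1, c_2, c_4, \delta, Q$.

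For the dyadic structure of $X_\alpha$, I would reindex the descendants of $\alpha$ by relative depth, placing $\beta \in I_{k+h}$ at level $h$ of the decomposition of $X_\alpha$. Axioms (ii), (iii), (iv), (v) of Theorem \ref{Christ_Thm} are then inherited verbatim from their validity in $X$ (with the constant $c_4$ absorbing the factor $\delta^k$ under a rescaling of the metric, or simply kept in the original units). Axiom (i), which reads $m\bigl(X_\alpha \setminus \bigcup_{\beta \geq \alpha,\, \beta \in I_{k+h}} Q_\beta\bigr) = 0$, follows from Lemma \ref{localizing} combined with the fact (Corollary \ref{eleffe}) that the union of the boundaries of the $Q_\beta$ is $m$-null.

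The only real point of care is the selection of the scale $l$ in the lower bound: $l$ must be large enough that $\overline{Q}_\beta$ fits inside $B(x,r)$, yet not wastefully large, or else $m(Q_\beta)$ will not match $r^Q$. Taking $l$ minimal with $c_4 \delta^l < r$ balances these two requirements simultaneously, and the hypothesis $r \leq \mathrm{diam}(X_\alpha) \leq c_4 \delta^k$ is precisely what guarantees $l > k$, so that an honest descendant $\beta \geq \alpha$ exists at the selected scale. Once $\beta$ is produced by Lemma \ref{localizing}, both the containment in $B(x,r) \cap X_\alpha$ and the mass bound $m(Q_\beta) \gtrsim r^Q$ follow without further effort.
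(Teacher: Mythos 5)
Your proof is correct and follows essentially the same route as the paper: both use Lemma \ref{localizing} to find a descendant box $\overline{Q}_\beta$ at a scale comparable to $r$ containing $x$, then invoke the Ahlfors regularity of $X$ to bound $m(Q_\beta)$ from below by a multiple of $r^Q$. Your version is slightly more meticulous in choosing the scale so that $\overline{Q}_\beta$ fits inside $B(x,r)$ itself rather than a dilated ball, and in spelling out the ``Moreover'' clause, which the paper leaves implicit.
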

\begin{proof}Assume without loss of generality that $\alpha\in I_0$ and let $x$ be a point of $\overline{Q}_\alpha$. Let $r\in(0,1)$, $\delta^{k+1}<r\le\delta^k$, and let $\beta\in I_k$, $\beta\ge\alpha$, be such that $x\in\overline{Q}_\beta$ (at least one such $\beta$ exists by Lemma \ref{localizing}). By Christ's Theorem, there is constant $C$ such that $Q_\alpha\cap B(x,C\delta^k)\supseteq Q_\beta$, hence,
$$
 m(Q_\alpha\cap B(x,C\delta^k))\ge m(Q_\beta)\ge C^\prime\delta^{Qk}.
$$
Ahlfors-regularity of $\overline{Q}_\alpha$ immediately follows.
\end{proof}

\subsection{The space \texorpdfstring{$X$}{X} as the Boundary of \texorpdfstring{$G$}{G}}

We now define a new distance $\orho$ on $G\cup X$. Let $\alpha\ne\beta$ be points of $G$ such that 
$d(\alpha)\le d(\beta)\le d(\alpha)+1$ and $\alpha\genfrac{}{}{0pt}{}{\sim}{G}\beta$. 

To the edge $[\alpha,\beta]$ we associate the weight $\ell_G([\alpha,\beta]):=\delta^{d(\alpha)}$. 
The length of a path $\Gamma=(\alpha_0,\dots,\alpha_n)$ in $G$ is defined by adding the length of its edges. 

If needed, we set $\ell_G((\alpha))=0$,  where $(\alpha)$ is the trivial path from $\alpha$ to itself. The distance 
$\orho$ is defined, on  points $\alpha,\beta\in G$, as

\begin{equation}
\label{orho}
\orho(\alpha,\beta)=\inf\{\ell_G(\Gamma):\ \Gamma\ \mbox{a\ path\ having\ endpoints\ }\alpha
\ \mbox{and}\ \beta\}.
\end{equation}

By the triangle inequality,

\begin{equation}\label{rhorho}
\rho(Q_\alpha,Q_\beta)\le c\orho(\alpha,\beta).
\end{equation}

Let $(\overline{G},\orho)$ be the completion of the metric space $(G,\orho)$. By this we mean the usual Cantor
construction: Cauchy sequences $\{\alpha_n\}$ and $\{\beta_n\}$ in $(G,\rho)$ are identified if 
$\lim_{n\to\infty}\rho(\alpha_n,\beta_n)=0$.  
Each point $\alpha\in G$, identified with the equivalence class of the sequences in$G$ taking value
$\alpha$ definitely, is isolated in 
$\overline{G}$. Let $\partial G:=\overline{G}\setminus G$.  
There is a bijection $\flat$ between $\partial G$ and $X$.

The metric space  $\overline{G}$ is the \it Floyd completion \rm of $G$. We will show in Theorem \ref{lengthmetric} 
that the map $\flat$ is bi-Lipschitz and onto. A more general result is stated in Theorem 4.7 of the very informative
\cite{Geo}. It is also shown there that the weights can be chosen in such a way $\flat$ becomes an isometry.
We thank the referee for making us aware of the article \cite{Geo}, that also contains a detailed description of 
much related literature. 
In order to keep the exposition as self-contained as possible, and close to the
viewpoint given by Christ's Theorem, we provide our own proof of Theorem \ref{lengthmetric}.

\begin{lemma}
\label{puntinisullei}
Let $a=\{\alpha_n\}$ be Cauchy sequence  in $(G,\orho)$, which is not definitely constant, and let 
$[a]$ be the equivalence class of $a$ in $\partial G$. Define
$$
\flat([a])=x\in X
$$
if and only if $\lim_{n\to\infty}\rho(x,Q_{\alpha_n})=0$.  Then, $\flat$ is a well defined bijection of $\partial G$ onto $X$.
\end{lemma}
\begin{proof}
The proof will be broken up into several steps.  
\begin{itemize}

\item \it To each Cauchy sequence $\flat$ associates a unique $x\in X$. \rm 

Let $d(\alpha_n)$ be the level of $\alpha_n$ in $T$.  Since the  sequence is not definitely constant, 
it is Cauchy and $G$ is discrete, we must have $\lim_{n\to\infty}d(\alpha_n)=+\infty$. Let $z_{n}$ be a 
distinguished point in $Q_{\alpha_n}$. Let $n,m\ge1$. Using (\ref{rhorho}),
\begin{eqnarray*}
\rho(z_n,z_{n+m})&\le& \mbox{diam}(Q_{\alpha_n})+\rho(Q_{\alpha_n},Q_{\alpha_{n+m}})
+\mbox{diam}(Q_{\alpha_{n+m}})\crcr
&\le&c(\delta^{d(\alpha_n)}+\orho(\alpha_n,\alpha_{n+m})+\delta^{d(\alpha_{n+m})})
\to 0,
\end{eqnarray*}
as $n\to\infty$. Since $(X,\rho)$ is complete and $\{z_n\}$ is Cauchy, it has a well defined limit $x$ and 
$\lim_{n\to\infty}\rho(x,Q_{\alpha_n})\le\lim_{n\to\infty}\rho(x,z_n)=0$. This shows existence of $x$.

Being the sequence $\alpha_n$ non-definitely constant, the $\rho$-diameter of $Q_{\alpha_n}$ tends to
$0$ as $n\to\infty$, which immediately implies the uniqueness of $x$ with the property .
$\lim_{n\to\infty}\rho(x,Q_{\alpha_n})=0$.

\item \it If $[a]=[b]$, $a$ and $b$ define the same point in $X$: $\flat([a])=\flat([b])$. \rm 

Let $a=\{\alpha_n\}$ and $b=\{\beta_n\}$.  Choose distinguished points $z_n\in Q_{\alpha_n}$ and 
$w_n\in Q_{\beta_n}$. Reasoning as above, $\lim_{n\to\infty}\rho(z_n,w_n)=0$, hence the two sequences converge to the same point $x\in X$.

\item \it $\flat:\partial G\to X$ is surjective. \rm 

Given $x$ in $X$, for each $n\ge0$ we can choose by Lemma \ref{euno} a $Q^n_{\alpha_n}$ such that 
$x\in\overline{Q^n_{\alpha_n}}$. It is easily checked that the graph distance between $(n,\alpha_n)$ and 
$(n+1,\alpha_{n+1})$ is at most $2$ and that $\flat([\{(n,\alpha_n)\}])=x$. 

\item \it $\flat$ is injective. \rm
Suppose that $a=\{{\alpha_n}\},\ b=\{{\beta_n}\}$ are Cauchy sequences in $(G,\orho)$, not definitely
constant, and that $\flat([a])=\flat([b])$. This means that there is $x\in X$ is such that 
$\lim_{n\to\infty}\rho(Q_{\alpha_n},x)=\lim_{n\to\infty}\rho(Q_{\beta_n},x)=0$. Since the diameters of 
$Q_{\alpha_n}$ and $Q_{\beta_n}$ tend to zero and, by the triangle property, 
$\rho(Q_{\alpha_n},Q_{\alpha_n})$ tends to zero, $[a]=[b]$.


\end{itemize}
\end{proof}















The map $\flat$ in Lemma \ref{puntinisullei} identifies the metric measure space $X$ we started with the 
metric boundary of a graph $G$. From now on, we simply write $\partial G=X$. 
The theorem below says that such identification carries all the important metric structure of $X$.

\begin{theorem}
\label{lengthmetric}
\hfill
\begin{itemize}
\item[(i)] The metric $\orho$, restricted to $X$, is bi-Lipschitz equivalent to $\rho$:
$$
c\rho(x,y)\le\orho(x,y)\le c\rho(x,y).
$$\\
\item[(ii)] Let $Z_k=\{z^k_\alpha:\ \alpha\in I_k\}$ be the set of the distinguished points $z^k_\alpha\in Q^k_\alpha$ mentioned in Theorem A; and let $G_k=\{\alpha\in G:\ d(\alpha)=k\}$. Then, $\rho|_{Z_k}$ is bi-Lipschitz equivalent to $\orho|_{G_k}$,
$$
c\rho(z^k_{\alpha},z^k_{\beta})\le\orho(\alpha,\beta)\le c\rho(z^k_{\alpha},z^k_{\beta}).
$$
\end{itemize}
\end{theorem}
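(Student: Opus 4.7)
The plan is to establish (ii) first by direct combinatorial arguments on the graph $G$, and then to obtain (i) by passing to the limit through boundary-approximating Cauchy sequences.

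For the lower bound in (ii), I would invoke \eqref{rhorho} together with (iv) of Theorem \ref{Christ_Thm}. The triangle inequality in $(X,\rho)$ yields $\rho(z^k_\alpha, z^k_\beta) \le \mathrm{diam}(Q^k_\alpha) + \rho(Q^k_\alpha, Q^k_\beta) + \mathrm{diam}(Q^k_\beta) \le 2c_4\delta^k + c\,\orho(\alpha,\beta)$. The key observation is that every edge of $G$ incident to a vertex at level $k$ has $\ell_G$-weight at least $\delta^k$, since descending and horizontal edges have weight exactly $\delta^k$ while ascending edges are heavier (recall $0<\delta<1$). Thus $\orho(\alpha,\beta)\ge \delta^k$ whenever $\alpha\ne\beta$ in $G_k$, which absorbs the $2c_4\delta^k$ term and gives $c\rho(z^k_\alpha,z^k_\beta)\le\orho(\alpha,\beta)$.

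For the upper bound in (ii), set $r:=\rho(z^k_\alpha,z^k_\beta)$ and note that $r\ge \delta^k$ because by (v) of Theorem \ref{Christ_Thm} the ball $B(z^k_\alpha,\delta^k)$ lies in $Q^k_\alpha$, while $z^k_\beta \notin Q^k_\alpha$. Choose $j\in\{0,1,\dots,k\}$ maximal with $\delta^j\ge r$, and let $\alpha^j,\beta^j$ be the level-$j$ ancestors of $\alpha,\beta$. Since $z^k_\alpha\in Q^j_{\alpha^j}$, $z^k_\beta\in Q^j_{\beta^j}$, and $\rho(z^k_\alpha,z^k_\beta)=r\le\delta^j$, these two level-$j$ cubes either coincide or are joined by a horizontal edge of $G$. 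Concatenating vertical edges from $\alpha$ up to $\alpha^j$ (weights $\delta^{k-1},\delta^{k-2},\dots,\delta^j$), at most one horizontal bridge at level $j$ (weight $\delta^j$), and vertical edges from $\beta^j$ down to $\beta$ produces a path of total $\ell_G$-weight at most $2\sum_{i=j}^{k-1}\delta^i + \delta^j \le C\delta^j$. Since $\delta^{j+1}<r$ by maximality of $j$ (with the degenerate case $j=0$ handled directly using $r\le \mathrm{diam}(X)$), this bounds $\orho(\alpha,\beta)$ by a constant multiple of $r$.

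For (i), the passage to the limit goes as follows. Given $x,y\in X=\partial G$, I invoke the surjectivity construction in the proof of Lemma \ref{puntinisullei} to produce Cauchy sequences $\{\alpha_n\},\{\beta_n\}$ at common level $d(\alpha_n)=d(\beta_n)=n$ with $x\in\overline{Q^n_{\alpha_n}}$, $y\in\overline{Q^n_{\beta_n}}$, converging respectively to $x$ and $y$ in $(\overline{G},\orho)$. By (iv) of Theorem \ref{Christ_Thm}, $\rho(z^n_{\alpha_n},x)\le c_4\delta^n\to 0$ and similarly for $y$, so the distinguished points converge to $x,y$ in $(X,\rho)$. Applying (ii) to each pair $(\alpha_n,\beta_n)$ and sending $n\to\infty$ yields (i). The main obstacle will be calibrating the meeting scale $j$ in the upper bound of (ii) and verifying that the selected level-$j$ ancestors are genuinely $G$-adjacent; once that scale is pinned to $r$, the geometric series of edge weights telescopes cleanly and the remaining bookkeeping is routine.
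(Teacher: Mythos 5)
Your proof is correct, and the two quantitative ingredients coincide with the paper's: the upper bound rests on a single horizontal bridge at the level $j$ with $\delta^j\approx\rho(x,y)$ plus the telescoping geometric series of vertical edge weights, and the lower bound rests on the triangle inequality along an arbitrary path, i.e.\ \eqref{rhorho}. What you do differently is organizational, but genuinely so: the paper proves (i) first, working directly with boundary points, which forces an $\epsilon$-bookkeeping argument in the lower bound (approximating $x,y$ by vertices $\alpha_n,\beta_n$ and absorbing the error terms at the end), and then dismisses (ii) as ``proved similarly''; you instead prove the purely discrete statement (ii) first --- where your observation that $\orho(\alpha,\beta)\ge\delta^k$ for distinct $\alpha,\beta\in G_k$ (every edge incident to a level-$k$ vertex has weight at least $\delta^k$) absorbs the diameter terms with no limiting process --- and then deduce (i) by passing to the limit along the approximating sequences of Lemma \ref{puntinisullei}. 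Your order buys a cleaner separation of the combinatorics from the analysis and replaces the $\epsilon$-argument by a single application of continuity of the two metrics; the paper's order puts the boundary statement, which is the one used in the sequel, first. The only points worth making explicit in your write-up are that for $x\ne y$ the vertices $\alpha_n,\beta_n$ are eventually distinct (so that (ii) applies nontrivially), and that $\orho(\alpha_n,\beta_n)\to\orho(x,y)$ because $\alpha_n\to x$ and $\beta_n\to y$ in $(\overline{G},\orho)$, which is exactly what the surjectivity step of Lemma \ref{puntinisullei} supplies.
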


The theorem says that all  Ahlfors-regular spaces (bounded and complete, so far, but we think that these 
further assumptions can be easily removed) are - modulo a bi-Lipschitz change of the metric - 
the boundaries of rather regular and concrete graphs.  Or, in other terms, that all such metric spaces 
admit a natural extension to a metric measure space in which the distance is given by length of curves 
and it is realized, it can be shown, by geodesics. The theme of geodesics is discussed in 
depth in\cite{Geo}.

By fattening the graph's edges, we might even see that the space $X=\partial G$ is the metric boundary 
(modulo bi-Lipschitz change of metric) of a Riemannian manifold $M$ which maintains many of the properties 
of $X$. $M$, for instance, could be taken to be Ahlfors regular, having as topological and Hausdorff 
dimension the least integer $n>Q$. 

\proof 
(i) Let $x,y$ be points in $X$. We show first that $\orho(x,y)\le c\rho(x,y)$. Suppose that $\rho(x,y)\le\delta^k$ and let $\alpha,\beta\in I_{k}$ be such that $x\in \overline{Q^{k}_\alpha},\ y\in \overline{Q^{k}_\beta}$. Then, there is an edge of $G$ between $\alpha$ and $\beta$, hence $\orho(\alpha,\beta)\le\delta^k$. Arguing like in the proof of $\flat$'s surjectivity, we can find sequences $a=\{\alpha_n\}$ and $b=\{\beta_n\}$ such that $\alpha_0=\alpha$, $\beta_0=\beta$; $\beta_n,\alpha_n\in I_{k+n}$;  $x\in \overline{Q_{\alpha_n}}$ and $y\in \overline{Q_{\beta_n}}$. Two successive elements of the sequence $a$ have graph distance at most $2$, and the same holds for $b$. A geometric series applied to the sequences shows then that $\orho(\alpha,x)\le c\delta^k$ and $\orho(y,\beta)\le c\delta^k$. Then, $\orho(x,y)\le c\delta^k$, as wished.

In the other direction, assume that $\rho(x,y)>0$ (otherwise there is nothing to prove), with $x=\flat([\{\alpha_n\}])$ and $y=\flat([\{\beta_n\}])$. By Lemma \ref{puntinisullei}, for each $\epsilon>0$ we can take $n=n(\epsilon)$ large enough to have: 

(i) $\orho(x,\alpha_n),\orho(y,\beta_n)\le\epsilon\rho(x,y)$ (because $\{\alpha_n\}$ and $\{\beta_n\}$ are Cauchy sequences tending to $x$ and $y$, respectively, in $\overline{G}$);

(ii) $\rho(x,\alpha_n),\rho(y,\beta_n)\le\epsilon\rho(x,y)$ (by definition of $\flat$); 

(iii) $n\ge n_0$ to be chosen. 

Let $[\gamma_0=\alpha_n,\gamma_1,\dots,\gamma_m=\beta_n]$ be any path in $G$ which joins $\alpha_n$ and $\beta_n$, and choose distinguished points $w_j\in Q_{\gamma_j}$ ($o\le j\le m$). We have:

\begin{eqnarray*}
\rho(x,y)&\le&\rho(x,w_0)+\sum_{j=1}^m\rho(w_j,w_{j-1})+\rho(w_m,y)\crcr
&\le&2\epsilon\rho(x,y)+\mbox{diam}(Q_{\alpha_n})+\mbox{diam}(Q_{\beta_n})\crcr
&+&\sum_{j=1}^m\rho(w_j,w_{j-1})+\rho(w_m,y)\crcr
&\ &\mbox{by\ (ii)}\crcr
&\le&4\epsilon\rho(x,y)+c\ell_G([\gamma_o,\dots,\gamma_m]),
\end{eqnarray*}

by (iii) and by definition of $\ell_G$. Passing to the infimum of the lengths,

\begin{eqnarray*}
(1-4\epsilon)\rho(x,y)&\le&c\orho(\alpha_n,\beta_n)\crcr
&\le& c[\orho(x,y)+\orho(x,\alpha_n)+\orho(y,\beta_n)]\crcr
&\le&c\orho(x,y)+2c\epsilon\rho(x,y),
\end{eqnarray*}

by (i). It suffices to choose $\epsilon$ small enough to get the desired inequality.

Part (ii) of the theorem is proved similarly.

\endproof

A consequence of the proof is that, given $x,y\in X$,  there exists a ``near geodesic''  in $\overline{G}$ for the distance $\orho(x,y)$, passing through a point at level $k$, where $\rho(x,y)\approx\delta^k$. This fact, indeed, extends the well known relations between Euclidean and hyperbolic distances, and geodesics, in the upper half space.

When the space $(X,\rho)$ has a metric defined by a length, like the spaces introduced by Heinonen and Koskela in \cite{HeK}, then we find ``near geodesics'' joining $x,y\in X$ at all levels $n$ of $\overline{G}$ such that $\delta^n\le \rho(x,y)$.

When the space is poor of rectifiable curves, as it happens with ``snowflake'' metrics on the real line, the near geodesics joining $x$ and $y$ cannot be completely contained in a small strip of $\overline{G}$: they have to reach a level $n$ such that $\delta^n\gtrsim\rho(x,y)$.

\subsection{The Tree and its Boundary as Metric Spaces}

\subsubsection{The Boundary of a Tree.}

Let $T$ be a tree with root $o$ (not necessarily the tree arising from Christ's Theorem). We assume that $T$ has 
\it bounded degree \rm and that \it it has no childless vertices\rm: there is $N\ge1$ such that each vertex $\alpha$ in $T$ has $N(\alpha)$ children, with $1\le N(\alpha)\le N$.

We introduce some notation which will be frequently used. If $\alpha$ is a vertex of $T$, the \it predecessor set \rm of $\alpha$ is $\PPP(\alpha)=[o,\alpha]$, while the \it successor set \rm of $\alpha$ is $\SSS(\alpha)=\{\beta\in T:\ \alpha\in\PPP(\beta)\}$. Given $\alpha$ and $\beta$ in $T$, $\alpha\wedge\beta=\max\left(\PPP(\alpha)\cap\PPP(\beta)\right)$ is their \it confluent. \rm The \it predecessor \rm $\alpha^{-1}$ of $\alpha\in T\setminus\{o\}$ is the only element $\beta$ in $\PPP(\alpha)$ such that $d(\alpha,\beta)=1$.

The \it combinatorial boundary \rm $\tilde{\partial}T$ of $T$ is the set of all half-infinite geodesics, 
with respect to the edge counting distance, having an endpoint at the root.  
For ease of notation, we think of $\tilde{\partial}T$ as a set of labels $\zeta$ 
of the geodesics $\Gamma_\zeta$.  The topology for $\tilde{\partial}T$ is that having as basis the sets
$$
\partial\SSS(\alpha)=\{\zeta\in \tilde{\partial}T:\ \alpha\in\Gamma_\zeta\}.
$$
Let now $\delta\in(0,1)$ be fixed. Associate to each edge $(x^{-1},x)$ of $T$ ($x^{-1}$, we recall, 
is the parent of $x$) the weight  $w(x,x^{-1})=\delta^{d(x,o)}$, where $d$ is the edge-counting distance 
in $T$.

The metric $\rho_T$ on $T$ with parameter $\delta$ is defined as the length-metric associated with the 
weight $w$. The geodesics for $\rho_T$ are clearly the same as the geodesics for the edge-counting metric.
Actually, the poverty of the tree structure is such that all length-metrics have the same geodesics; 
and the edge-counting metric is a particular length metric. Let $\oT$ be the metric completion of $T$ 
with respect to $\rho_T$. The \it metric boundary \rm $\partial T$ of $T$ is $\oT\setminus T$, with the 
topology induced by the metric $\rho_T$.

\begin{lemma}
\label{obvious}
For each $\zeta$ in $\tilde{\partial}T$, consider the geodesic $\Gamma_\zeta$ as a sequence of points in $T$. Then, 

\begin{itemize}
\item[(i)] $\Gamma_\zeta$ is a Cauchy sequence for $\rho_T$;\\
\item[(ii)] Let $\natural(\zeta)$ be the equivalence class of $\Gamma_\zeta$ in $\partial T$. The map $\zeta\mapsto\natural(\zeta)$ is a homeomorphism of $\tilde{\partial}T$ onto $\partial T$.
\end{itemize}
\end{lemma}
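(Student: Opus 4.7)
The plan is to establish both parts by direct, quantitative estimates on the weighted path lengths in $T$, and then convert the standard identification of ends of a tree with boundary points of its completion into a homeomorphism statement.

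For \textbf{(i)}, I would write $\Gamma_\zeta = (\alpha_0, \alpha_1, \alpha_2, \dots)$ with $\alpha_0 = o$ and $d(\alpha_k, o) = k$, so that the edge $(\alpha_{k-1}, \alpha_k)$ carries weight $\delta^k$. For $m > n$ this gives
\[
\rho_T(\alpha_n, \alpha_m) \;\le\; \sum_{k=n+1}^{m} \delta^k \;\le\; \frac{\delta^{n+1}}{1-\delta},
\]
which tends to $0$ as $n \to \infty$ since $\delta \in (0,1)$. So $\Gamma_\zeta$ is Cauchy and $\natural$ is at least well defined.

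For \textbf{(ii)}, I would prove surjectivity, injectivity, and then show that $\natural$ is a homeomorphism. For \emph{surjectivity}, take a Cauchy sequence $\{\beta_n\}$ representing a point $x \in \partial T$. I would first observe that $d(\beta_n, o) \to \infty$: indeed, two distinct vertices of $T$ at level $\le k$ are $\rho_T$-separated by a positive constant depending only on $k$, so if the levels stayed bounded the sequence would be eventually constant and $x$ would lie in $T$. Next, for each $k$, by the Cauchy property the ancestor $\gamma_k^{(n)}$ of $\beta_n$ at level $k$ stabilizes for $n$ large (if the confluent of $\beta_n,\beta_m$ were at level $<k$, the geodesic would contain an edge of weight $\ge \delta^k$, contradicting smallness of $\rho_T(\beta_n,\beta_m)$); call the stable value $\gamma_k$. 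Then $(\gamma_0,\gamma_1,\dots)$ is a half-infinite geodesic, giving $\zeta \in \tilde\partial T$, and since $\gamma_k$ is an ancestor of $\beta_n$ for all $n$ large,
\[
\rho_T(\gamma_k, \beta_n) \;\le\; \sum_{j=k+1}^{d(\beta_n)} \delta^{j} \;\le\; \frac{\delta^{k+1}}{1-\delta},
\]
so $\{\beta_n\}$ and $\Gamma_\zeta$ are equivalent and $\natural(\zeta) = x$. For \emph{injectivity}, if $\zeta \neq \zeta'$, then $\Gamma_\zeta$ and $\Gamma_{\zeta'}$ share a common initial segment and then split at some level $j+1$; thereafter the geodesic in $T$ between $\Gamma_\zeta(k)$ and $\Gamma_{\zeta'}(k)$ (for $k > j$) must cross the confluent at level $j$ and so contains two edges of weight $\delta^{j+1}$, whence $\rho_T(\Gamma_\zeta(k), \Gamma_{\zeta'}(k)) \ge 2\delta^{j+1} > 0$ uniformly in $k$, so $\natural(\zeta) \neq \natural(\zeta')$.

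For the \emph{homeomorphism} property I would exhibit matching neighborhood bases. The basic open set $\partial \SSS(\alpha) \subset \tilde\partial T$ maps under $\natural$ to the set of those $x \in \partial T$ represented by Cauchy sequences eventually passing through descendants of $\alpha$; by the above estimates this coincides with $\{ x \in \partial T : \rho_T(x, \alpha) \le \delta^{d(\alpha)+1}/(1-\delta) \}$ up to a comparable shrinking, and conversely any $\rho_T$-ball in $\partial T$ about $\natural(\zeta)$ contains, and is contained in, a set of the form $\natural(\partial \SSS(\alpha))$ for a suitable ancestor $\alpha$ of $\zeta$ chosen so that $\delta^{d(\alpha)}$ matches the radius. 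Hence $\natural$ and $\natural^{-1}$ send neighborhood bases to neighborhood bases, so $\natural$ is a homeomorphism. I expect the main technical point to be the argument in the surjectivity step that $d(\beta_n,o) \to \infty$ and that the ancestors at each level stabilize; once that is in hand, the remaining parts are routine geometric series bookkeeping using the single fundamental estimate $\rho_T(\alpha, \beta) \asymp \delta^{d(\alpha \wedge \beta)}$ for $\alpha,\beta$ in distinct branches.
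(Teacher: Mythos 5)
Your proof is correct. The paper itself offers no argument for this lemma (it states only that ``the proof is easy and left to the reader''), and your write-up is precisely the standard argument the authors intend: the geometric-series bound $\rho_T(\alpha_n,\alpha_m)\le\delta^{n+1}/(1-\delta)$ for (i), and for (ii) the observations that levels of a non-stabilizing Cauchy sequence must tend to infinity, that ancestors at each fixed level eventually stabilize, that distinct ends are separated by $2\delta^{d(\zeta\wedge\zeta')+1}$, and that $\natural$ carries the basis $\{\partial S(\alpha)\}$ of $\tilde{\partial}T$ onto the metric balls of $\partial T$. No gaps.
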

The proof of the lemma is easy and it is left to the reader.

Extend $\natural$ to a map from $T\cup\tilde{\partial}T$ to $T\cup\partial T$ by letting $\natural|_T=Id|_T$. Requiring the extended $\natural$ to be a homeomorphism gives a topological structure to $\oT=T\cup\tilde{\partial}T$.

We extend the tree notation to take into account the boundary.  If $\zeta\in\partial T$, let 
$\PPP(\zeta)=\Gamma_\zeta$ is the geodesic from the root to $\zeta$. 

For $\alpha,\beta\in\oT$, let
$$
\alpha\wedge\beta=\max(\PPP(\alpha)\cap\PPP(\beta))
$$
be the confluent of $\alpha$ and $\beta$. Observe that the geodesic $[\alpha,\beta]$ between 
$\alpha$ e $\beta$ in $\oT$ with respect to the metric $\rho_T$ is
$$
[\alpha,\beta]=[\PPP(\alpha)\cup\PPP(\beta)]\setminus\PPP((\alpha\wedge\beta)^{-1}),
$$
thinking of geodesics as sets of vertices, rather than sequences of adjacent edges.

If $\alpha$ is an element of $T$, $\partial\SSS(\alpha)\subseteq\partial T$ is the set of the half-infinite geodesics passing through $\alpha$: $\partial\SSS(\alpha)=\overline{\SSS(\alpha)}\setminus\SSS(\alpha)$.  The metric $\rho_T$ can be explicitly computed. If $\zeta,\xi\in\oT$, then

\begin{equation}
 \label{rhot}
\rho_T(\zeta,\xi)=
\frac{2\delta}{1-\delta}
\left[\delta^{d(\zeta\wedge\xi)}-\frac{1}{2}\left(\delta^{d(\zeta)}+\delta^{d(\xi)}\right)\right].
\end{equation}
In some references $\rho_T$ is called the \it Gromov metric \rm on $\oT$. 
The restriction of $\rho_T$ to $\partial T$ is
$$
\rho_T(\zeta,\xi)=\frac{2\delta}{1-\delta}\delta^{d(\zeta\wedge\xi)},
$$
which is called \it ultrametric \rm in another stream of the vast literature concerning metric spaces.

The balls in $(\partial T,\rho_T)$ are exactly the cl-open sets $\partial\SSS(\alpha)$ and $T$ itself might be seen as the tree of the metric balls in $\partial T$.

\subsubsection{The tree $T$ associated with the graph $G$.}
Let now $T$, the rooted tree, and $\delta>0$ be the same as in Christ's Theorem. The metric $\rho_T$ just defined on $\oT$ is the length-metric associated with the length $\ell_T$, which is the restriction of the length $\ell_G$ to the edges of $T$.  The trivial estimate $\rho_T(\alpha,\beta)\ge\orho(\alpha,\beta)$ cannot in general be reversed.

We can define a Borel measure $\tilde{ m}$ on $\partial T$ by declaring that
$$
\tilde{ m}(\partial\SSS(\alpha)):= m(Q_\alpha).
$$

It is easy to see that the definition is consistent (by Christ's Theorem).
\begin{proposition}
\label{bazzano}
The space $(\partial T,\rho_T,\tilde{ m})$ is complete and Ahlfors $Q$-regular.
\end{proposition}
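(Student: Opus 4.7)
The plan is, in order, to confirm that $\tilde m$ extends to a Borel measure on $\partial T$, to establish metric completeness of $(\partial T,\rho_T)$, and then to read off the Ahlfors $Q$-regular bounds directly from Christ's dyadic decomposition of $X$, exploiting the ultrametric character of $\rho_T$ on $\partial T$.

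First, I would verify that $\tilde m$ is consistently defined. The sets $\partial\SSS(\alpha)$, $\alpha\in T$, form a countable clopen basis for the topology of $\partial T$, and any two of them are either nested or disjoint. The only nontrivial additivity to check is
\[
m(Q_\alpha)=\sum_{\beta^{-1}=\alpha}m(Q_\beta),
\]
which I would deduce from Lemma \ref{localizing} applied with $h=1$, from the pairwise disjointness of the children $Q_\beta$, and from Corollary \ref{eleffe}(iii): the remainder $Q_\alpha\setminus\bigsqcup_{\beta^{-1}=\alpha}Q_\beta$ sits inside the null set $F$. Since $\partial T$ is compact (each $I_k$ is finite because the $Q_\alpha^k$ are disjoint with $m$-measure bounded below by $c_1\delta^{Qk}$ while $X$ has finite measure), countable disjoint decompositions inside any $\partial\SSS(\alpha)$ reduce to finite ones, so the additivity above is automatically $\sigma$-additive and Carath\'eodory's extension yields a Borel measure $\tilde m$ on $\partial T$.

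For completeness, I would use the ultrametric formula $\rho_T(\zeta,\xi)=\frac{2\delta}{1-\delta}\delta^{d(\zeta\wedge\xi)}$. Given a Cauchy sequence $\{\zeta_n\}\subseteq\partial T$, for each $k\ge 0$ there is $N_k$ with $d(\zeta_n\wedge\zeta_m)\ge k$ for all $n,m\ge N_k$; thus the ancestor of $\zeta_n$ at depth $k$ stabilizes (for $n\ge N_k$) to a common vertex $\beta_k$, and the nested family $\beta_0\le\beta_1\le\cdots$ determines an infinite geodesic from $o$, hence a point $\zeta\in\partial T$ via $\natural$, to which $\zeta_n$ converges at rate $\frac{2\delta}{1-\delta}\delta^k$.

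Finally, for the Ahlfors bound I would fix $\zeta\in\partial T$ and $0<r\le\mbox{diam}(\partial T)$, pick an integer $k=k(r)\ge 0$ with $\delta^k\approx r$, and let $\alpha_k\in\PPP(\zeta)$ be the ancestor of $\zeta$ at depth $k$. The ultrametric formula then forces $B_T(\zeta,r)=\partial\SSS(\alpha_k)$ exactly, so $\tilde m(B_T(\zeta,r))=m(Q_{\alpha_k})$, and Christ's Theorem parts (iv)--(v) combined with Ahlfors $Q$-regularity of $X$ deliver $m(Q_{\alpha_k})\approx\delta^{Qk}\approx r^Q$, the comparison constants depending only on $c_1,c_2,c_4,Q$ and $\delta$. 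The one place where real care is required is precisely this identification $B_T(\zeta,r)=\partial\SSS(\alpha_k)$: it is the ultrametric character of $\rho_T$ on $\partial T$ that collapses a metric ball of arbitrary radius to a single element of the canonical basis, and this is exactly what turns the two-sided Ahlfors bound into a one-line consequence of Christ's Theorem rather than a covering argument with loss of constants.
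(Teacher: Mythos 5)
Your argument is correct and rests on exactly the same observation the paper invokes, namely that the ultrametric structure of $\rho_T$ on $\partial T$ makes every metric ball coincide with a basis set $\partial\SSS(\alpha)$, whence $\tilde m(B_T(\zeta,r))=m(Q_\alpha)\approx\delta^{Qd(\alpha)}\approx r^Q$ by Christ's Theorem (iv)--(v) and the Ahlfors regularity of $X$. The paper states this as a one-line consequence of that fact; you have merely supplied the routine details (consistency of $\tilde m$ via Lemma \ref{localizing} and Corollary \ref{eleffe}, and completeness via stabilization of ancestors) that the paper leaves to the reader.
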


The statement follows immediately from the fact noted above, that the metric balls in $\partial T$ are the sets of the form $\partial\SSS(\alpha)$.

\subsection{The Map \texorpdfstring{$\Lambda:\partial T\to X$}{Lambda from the boundary of the tree to X}}

Let $P_{T}(\xi)=(\xi_n)$ be the geodesic starting at the root $o$ and ending at the boundary point $\xi\in\partial T$ (we might, and sometimes will, identify $\xi\equiv P_{T}(\xi)$).  Each $\xi_n\in T$ appearing in $P_T(\xi)$ can be identified with a dyadic box $Q(\xi_n)$ in $I_n$. 

We define a map $\Lambda:\partial T\to X$,
$$
\Lambda:\ \xi\mapsto\Lambda(x)=\cap_{n\ge0}\overline{Q(\xi_n)}.
$$

By the discussion above, $\Lambda$ is a contraction,
$$
\orho(\Lambda(\zeta),\Lambda(\xi))\le\rho_T(\zeta,\xi).
$$
The map $\Lambda$ is not open, however, neither is it one-to-one, in general. 

\begin{lemma}
\label{onetoone}
There is a constant $c$ independent of $x$ such that
$$
\sharp\{\zeta\in\oT:\ \Lambda(\zeta)=x\}\le c.
$$
\end{lemma}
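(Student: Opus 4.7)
The plan is to unwind the definition of $\Lambda$ and interpret the fiber $\Lambda^{-1}(x)$ as the set of infinite geodesics in a certain subtree of $T$, then bound the number of such geodesics by the uniform bound on level multiplicities provided by Lemma \ref{edue}(i).

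First, I would observe that $\Lambda(\zeta)=x$ if and only if the geodesic $P_T(\zeta)=(\zeta_n)_{n\ge 0}$ satisfies $x\in\overline{Q(\zeta_n)}$ for every $n\ge 0$. Since a point of $T$ is a vertex and gets mapped by $\Lambda$ to a vertex of $G$ (not to a point of $X=\partial G$), the fiber of $x\in X$ lies entirely in $\partial T$, so it suffices to count the infinite geodesics from the root $o$ that stay inside
\[
A_x \;:=\; \{\alpha\in T : x\in\overline{Q_\alpha}\}.
\]
By Lemma \ref{edue}(i), we have $|A_x\cap I_k|\le c_5$ for every $k\ge 0$, uniformly in $x$.

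Next I would carry out a simple divergence argument in the tree. Suppose that $\zeta^{(1)},\dots,\zeta^{(N)}$ are $N$ distinct elements of $\Lambda^{-1}(x)$. For each pair $i\ne j$ there is a first level
\[
k_{ij} \;=\; \min\{k\ge 0 : \zeta^{(i)}_k\neq \zeta^{(j)}_k\},
\]
which is finite since the geodesics are distinct. Because $T$ is a tree, once two descending geodesics separate at level $k_{ij}$ they remain in disjoint subtrees, so $\zeta^{(i)}_\ell\ne \zeta^{(j)}_\ell$ for every $\ell\ge k_{ij}$. Setting $K:=\max_{i\ne j}k_{ij}$ and choosing any $k\ge K$, the vertices $\zeta^{(1)}_k,\dots,\zeta^{(N)}_k$ are pairwise distinct. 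They all belong to $A_x\cap I_k$, so
\[
N \;\le\; |A_x\cap I_k| \;\le\; c_5,
\]
and taking $c=c_5$ finishes the proof.

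There is no real obstacle here; the lemma reduces to a combinatorial pigeonhole once Lemma \ref{edue}(i) is in place. The only mildly subtle point is justifying that divergent descending geodesics stay separated at all subsequent levels, which is immediate from the tree property of $T$.
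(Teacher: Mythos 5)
Your proof is correct and follows essentially the same route as the paper's: both arguments take finitely many distinct preimages, pass to a level beyond all pairwise points of divergence (the paper phrases this via the confluents $\zeta^{(i)}\wedge\zeta^{(j)}$) so that the corresponding vertices are distinct yet all contain $x$ in the closure of their dyadic boxes, and then invoke Lemma \ref{edue}(i). Your additional remark that the fiber lies entirely in $\partial T$ is a harmless clarification of the statement's $\oT$ versus the definition of $\Lambda$ on $\partial T$.
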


\proof Let $\zeta^{(1)},\dots,\zeta^{(n)}$ be points in $\partial T$ such that  $\Lambda(\zeta^{(j)})=x$, $\zeta^{(j)}=(\zeta^{(j)}_k)_{k\in\NN}$.  Let $N=\max\{d(\zeta^{(j)}\wedge\zeta^{(i)}):\ i\ne j=1,\dots,n\}$. Then,$x\in\overline{\zeta^{(1)}_{N+1}}\cap\dots \cap\overline{\zeta^{(n)}_{N+1}}$, and the dyadic sets $\overline{\zeta^{(j)}_{N+1}}$ are all distinct. By Lemma \ref{edue} (i), $n\le c_5$. 
\endproof

\begin{lemma}
\label{onto}
$\Lambda$ maps $\partial T$ onto $X$.
\end{lemma}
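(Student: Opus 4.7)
The plan is to construct, for an arbitrary $x\in X$, a half-infinite geodesic $(\alpha_n)_{n\ge 0}$ in $T$ with $\alpha_n\in I_n$ and $x\in\overline{Q_{\alpha_n}}$ for every $n$; this geodesic will represent a point $\zeta\in\tilde{\partial}T$, and via the homeomorphism $\natural$ of Lemma \ref{obvious} a point of $\partial T$ which $\Lambda$ sends to $x$. The subtlety is not the existence of a box $\overline{Q^n_\alpha}$ containing $x$ at each level (that is immediate from Lemma \ref{euno}), but rather that we need a \emph{nested} such choice, i.e., a sequence compatible with the tree structure.

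The nestedness is supplied by Lemma \ref{localizing}. Starting from $\alpha_0=o$, for which $x\in X=\overline{Q_o}$, suppose $\alpha_n\in I_n$ has been chosen with $x\in\overline{Q_{\alpha_n}}$. Applying Lemma \ref{localizing} with $h=1$ gives
\[
\overline{Q_{\alpha_n}}=\bigcup_{\beta\ge\alpha_n,\ \beta\in I_{n+1}}\overline{Q_\beta},
\]
so there exists at least one child $\alpha_{n+1}$ of $\alpha_n$ in $T$ with $x\in\overline{Q_{\alpha_{n+1}}}$. Fix one such $\alpha_{n+1}$. This recursion produces the desired geodesic $(\alpha_n)_{n\ge 0}$; by construction $\alpha_{n+1}$ is a child of $\alpha_n$, so the corresponding sequence in $T$ is a ray starting at the root, and determines an element $\zeta\in\tilde{\partial}T$.

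It remains to verify $\Lambda(\zeta)=x$. By the definition of $\Lambda$, the point $\Lambda(\zeta)$ is defined by $\cap_{n\ge 0}\overline{Q_{\alpha_n}}$; this makes sense provided the intersection consists of a single point. The closures $\overline{Q_{\alpha_n}}$ form a decreasing chain of nonempty closed subsets of the complete metric space $(X,\rho)$, each containing $x$, with $\mbox{diam}(\overline{Q_{\alpha_n}})\le c_4\delta^n\to 0$ by part (iv) of Theorem \ref{Christ_Thm}. Hence the intersection is precisely $\{x\}$, so $\Lambda(\zeta)=x$. Since $x\in X$ was arbitrary, $\Lambda$ is surjective. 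The only place any care is needed is ensuring that the ``new'' box chosen at each step is a \emph{child} of the previous one, and this is exactly the content of Lemma \ref{localizing}.
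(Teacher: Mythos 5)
Your proof is correct, and it takes a noticeably different route from the paper's. The paper considers, for each level $k$, the set $J_k$ of \emph{all} boxes $Q^k_\alpha$ with $x\in\overline{Q^k_\alpha}$ (nonempty by Lemma \ref{euno}), shows that every element of $J_{k+1}$ has its parent in $J_k$ (so $J=\cup_k J_k$ is a full subtree with vertices at every level), and then extracts an infinite geodesic by an appeal to the Axiom of Choice --- in effect a K\"onig-lemma argument, which implicitly relies on the bounded branching of $T$ from Lemma \ref{edue}. You instead build the geodesic top-down by a direct recursion: Lemma \ref{localizing} with $h=1$ guarantees that any box whose closure contains $x$ has a child with the same property, so no branch-extraction step is needed at all. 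Your approach buys a more constructive argument (only dependent choice for the recursion, no finite-branching input), at the cost of invoking Lemma \ref{localizing}, which the paper proves anyway; the paper's version gets by with Lemma \ref{euno} alone but leaves the ``infinite geodesic exists'' step slightly underjustified. Your closing verification that $\cap_n\overline{Q_{\alpha_n}}=\{x\}$ via the diameter bound of Theorem \ref{Christ_Thm}(iv) is a welcome addition: it confirms that $\Lambda(\zeta)$ is well defined and equals $x$, a point the paper dismisses with ``clearly.''
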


\proof 
Let $x$ be a point of $X$, $k\ge0$, consider those $Q^k_\alpha$ such that $x\in\overline{Q^k_\alpha}$, 
and let $J_k$ be their set. 
Each dyadic set in $J_{k+1}$  must have its parent in $J_k$: if $x\in Q^{k+1}_\alpha$, this is obvious; 
if $x\in\partial Q^{k+1}_\alpha$, and $Q^k_\beta$ is the parent of $Q^{k+1}_\alpha$, then $x\in\overline{Q^k_\beta}$. 
This shows that $J=\cup_kJ_k$ is a full subtree of $T$: 
$o\in J$ and $\alpha,\beta\in J\implies[\alpha,\beta]\subseteq J$. 
$J$ has elements at each level $k\ge0$ and it has bounded degree. 
By K\"onig's Infinity Lemma in graph theory, $J$ contains an infinite geodesic $\zeta$, and clearly 
$\Lambda(\zeta)=x$. The last assertion could also be deduced directly from the Axiom of Choice. 
\endproof

At this point, we can be more explicit about the shape of our \it ``near geodesics''. \rm  Let  $x\ne y\in X$ and let $k\ge0$ an integer such that $\delta^{k+1}\le\rho(x,y)\le\delta^k$. Consider  tree geodesics $\Gamma_x,\Gamma_y$ starting at level $k$ and going all the way down to $x$, $y$ respectively, and let $\gamma_k(x)$ and $\gamma_k(y)$ be their points at level $k$. Clearly, $\rho(Q_{\gamma_k(x)},Q_{\gamma_k(y)})\le c\delta^k$. Hence, there is a path $\Gamma^\prime$ in $G$, having length $\ell_G(\Gamma^\prime)\lesssim\delta^k$ joining $\gamma_k(x)$ and $\gamma_k(y)$. Furthermore, there is a fixed constant $h\in\mathbb{N}$ such that the path $\Gamma^\prime$ consists of at most $h$ edges between level $k$ and level $k-h$. The path $\Gamma_x\cup\Gamma^\prime\cup\Gamma_y$ is clearly a near geodesic between $x$ and $y$.

\subsection{Moving Measures through \texorpdfstring{$\Lambda$}{Lambda}}

We can use the map $\Lambda$ to push Borel measures from $\partial T$ to $X$. If $\nu$ is a measure on 
$\partial T$, let
$$
\Lambda_*\nu(E):=\nu(\Lambda^{-1}(E)),
$$
whenever $E$ is Borel measurable in $X$. We want to move measures in the other direction, as well. Let $\omega$ be a nonnegative, Borel measure on $X$, 
and -for each Borel subset $A$ of $\partial T$ and $x\in X$, set
$$
N(x)=\sharp\{\zeta\in\partial T:\ \Lambda(\zeta)=x\},\ N_A(x)=\sharp\{\zeta\in A:\ \Lambda(\zeta)=x\},
$$
and define
\begin{equation}\label{pulback}
 \Lambda^*\omega(A):=\int_X\frac{N_A(x)}{N(x)}d\omega(x).
\end{equation}

Note that, contrary to the pushforward $\Lambda_*$, the operator $\Lambda^*$ is non canonical.   The integral (\ref{pulback}) is well defined because $x\mapsto N_A(x)$ is Borel measurable, a fact which will be proved below.

For each $x$ in $X$, let $\nu_x(A)=\frac{\sharp(\Lambda^{-1}(x)\cap A)}{\sharp(\Lambda^{-1}(x))}$ be the normalized counting measure on $\Lambda^{-1}(x)$. Then
$$
\int_{\partial T}\varphi(\zeta)d\Lambda^*\omega(\zeta)=\int_X\left\{\int_{\Lambda^{-1}(x)}\varphi(\zeta)d\nu_x(\zeta)\right\}d\omega(x)
$$
whenever $\omega$ is a Borel measure on $X$ and $\varphi$ is Borel measurable.

We need the following lemma, whose proof is rather tedious, but reveals the inner functioning of the map $\Lambda$.

\begin{lemma}
 \label{alidosi} 
 Each Borel set $A$ in $\partial T$ can be decomposed $A=\coprod_i A_i$ as the disjoint, countable union of Borel sets $A_i$ such that the restriction $\Lambda|_{A_i}$ of $\Lambda$ to each $A_i$ is $1-1$.  Moreover, if 
$A$ is a Borel set in $\partial T$. Then, $\Lambda(A)$ is a Borel set in $X$.
\end{lemma}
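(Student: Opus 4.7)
The plan is to exploit the uniform fiber bound $|\Lambda^{-1}(x)|\le c_5$ from Lemma \ref{onetoone} by localizing the fiber analysis inside the clopen balls $\partial\SSS(\alpha)$. For each $\alpha\in T$, I set
$$
W_\alpha:=\{\zeta\in\partial\SSS(\alpha):\ \Lambda^{-1}(\Lambda(\zeta))\cap\partial\SSS(\alpha)=\{\zeta\}\},
$$
the set of boundary points that are the unique representative of their fiber inside the ball at $\alpha$. The restriction $\Lambda|_{W_\alpha}$ is then injective by construction. Moreover, I expect $\{W_\alpha\}_{\alpha\in T}$ to cover $\partial T$: for any $\zeta$ with fiber $\{\zeta_1,\dots,\zeta_m\}$, the pairwise confluents $\zeta_i\wedge\zeta_j$ all lie at some finite level, so taking $N$ larger than all such levels spreads the $\zeta_j$ across $m$ distinct level-$N$ balls $\partial\SSS(\alpha_j)$, placing each $\zeta_j$ in the corresponding $W_{\alpha_j}$.

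Next I would verify that each $W_\alpha$ is Borel. Two distinct points $\zeta\ne\zeta'\in\partial\SSS(\alpha)$ with $\Lambda(\zeta)=\Lambda(\zeta')$ diverge at their confluent $\beta\ge\alpha$ and enter two distinct children $\beta_1,\beta_2$ of $\beta$. Consequently the complement of $W_\alpha$ inside $\partial\SSS(\alpha)$ equals
$$
\bigcup_{\beta\ge\alpha}\ \bigcup_{\substack{\beta_1\ne\beta_2\\ \text{children of }\beta}} \partial\SSS(\beta_1)\cap\Lambda^{-1}\!\big(\Lambda(\partial\SSS(\beta_2))\big).
$$
Because $(\partial T,\rho_T,\tilde m)$ is complete, bounded, and Ahlfors regular (Proposition \ref{bazzano}) --- hence doubling and so compact --- each $\partial\SSS(\beta_2)$ is compact, $\Lambda(\partial\SSS(\beta_2))$ is closed in $X$ by continuity of $\Lambda$, and each summand is closed in $\partial T$. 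Thus the complement of $W_\alpha$ in $\partial\SSS(\alpha)$ is $F_\sigma$, and $W_\alpha$ is Borel.

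To produce the advertised disjoint decomposition of a given Borel $A$, I would enumerate $T=\{\alpha_1,\alpha_2,\dots\}$ and set $W_n':=W_{\alpha_n}\setminus\bigcup_{j<n}W_{\alpha_j}$. The $W_n'$ form a countable Borel partition of $\partial T$ on each piece of which $\Lambda$ is injective, so $A=\coprod_n(A\cap W_n')$ is the required decomposition.

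For the second assertion, each $A\cap W_n'$ is a Borel subset of the Polish space $\partial T$ on which the continuous map $\Lambda$ is injective. By the Lusin--Souslin theorem, an injective continuous image of a Borel set in a Polish space is Borel; hence each $\Lambda(A\cap W_n')$ is Borel in $X$, and so is their countable union $\Lambda(A)$. The main obstacle is the Borel measurability of $W_\alpha$ --- which relies on compactness of $\partial T$ to turn continuous images of clopen balls into closed subsets of $X$ --- and, secondarily, the invocation of Lusin--Souslin, whose proof specialized to this setting reduces to a Souslin-scheme argument exploiting the countable clopen basis $\{\partial\SSS(\alpha)\}$ of $\partial T$.
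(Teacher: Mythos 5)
Your argument is correct, but it follows a genuinely different route from the paper's. The paper stratifies the \emph{target} space: it introduces the counting function $N(x)$ and a stopping time $n(x)$ (the first level at which the $N(x)$ preimages of $x$ separate into distinct cubes), decomposes $X$ into Borel pieces $H_{k,n}(\alpha_1,\dots,\alpha_k)$, lifts these to disjoint Borel pieces $\HHH^j_{k,n}(\alpha_1,\dots,\alpha_k)\subseteq\partial T$ on which $\Lambda$ is a bijection, and then verifies that the family of countable disjoint unions of sets $\HHH^j_{k,n}(\dots)\cap\Lambda^{-1}(A)$ with $A$ Borel in $X$ is a $\sigma$-algebra containing the basic clopen sets; the payoff is that the image of each such piece is \emph{explicitly} $H_{k,n}(\dots)\cap A$, so Borelness of $\Lambda(A)$ comes for free, with no appeal to descriptive set theory. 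You instead stratify the \emph{source}: your sets $W_\alpha$ of points that are the sole representative of their fiber inside $\partial\SSS(\alpha)$ give a cleaner and shorter proof of the covering and of the first assertion (your identification of $\partial\SSS(\alpha)\setminus W_\alpha$ as a countable union of closed sets $\partial\SSS(\beta_1)\cap\Lambda^{-1}(\Lambda(\partial\SSS(\beta_2)))$ is correct, using compactness of $\partial T$, which indeed follows from Proposition \ref{bazzano} or directly from the bounded connectivity in Lemma \ref{edue}). The price is that for the second assertion you have no explicit description of $\Lambda(A\cap W_n')$ and must invoke the Lusin--Souslin theorem (legitimate here, since $\partial T$ and $X$ are both Polish by completeness, Proposition \ref{bazzano} and Corollary \ref{tonite}); the paper's more laborious construction is precisely engineered to avoid that external input and keep the proof self-contained. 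In short: your first half is simpler than the paper's, your second half outsources the work the paper does by hand.
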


\proof  Let $N$ be the counting function just introduced and define the stopping time
$$
n(x):=\min\{n\in\NN:\ x \ \mbox{belongs\ to\ the\ closure\ of\ }N(x)
\ \mbox{qubes\ at\ level\ }n\}.
$$

We need the sets
$$
H_{k,n}=\{x:\ N(x)=k\ \mbox{and}\ n(x)=n\},
$$
and their pieces
$$
H_{k,n}(\alpha_1,\dots,\alpha_k)=H_{k,n}\cap\overline{Q}_{\alpha_1}\cap\dots\cap\overline{Q}_{\alpha_k},
$$
where $\alpha_1,\dots\alpha_k\in\II_n$ (index set for the qubes at level $n$) are distinct and their order in the labeling 
of the set $H_{k,n}(\alpha_1,\dots,\alpha_k)$ does not matter. 

We \bf claim \rm that each set $H_{k,n}(\alpha_1,\dots,\alpha_k)$ is Borel measurable in $X$.  
Some of these sets and of the sets introduced below are empty, but this causes us no trouble.  
Set $E_k=\{x\in X:\ N(x)\ge k\}$. Then, $E_1=X$ and
$$
E_k=\bigcup_n\bigcup_{\genfrac{}{}{0pt}{}{\alpha_1,\dots,\alpha_k}{\mbox{\small distinct\ in\ }\II_n}}\left(\overline{Q}_{\alpha_1}\cap\dots\cap\overline{Q}_{\alpha_k}\right).
$$
Each $E_k$ is Borel measurable in $X$.  Set now $F_k:=E_k\setminus E_{k+1}=\{x\in X:\ N(x)=k\}$. Set further $G_{1,1}=F_1$, 
and
$$
G_{k,n}=F_k\cap\left[\bigcup_{\genfrac{}{}{0pt}{}{\alpha_1,\dots,\alpha_k}{\mbox{\small distinct\ in\ }\II_n}}
\left(\overline{Q}_{\alpha_1}\cap\dots\cap\overline{Q}_{\alpha_k}\right)
\right].
$$
Then, $H_{k,n}=G_{k,n}\setminus G_{k,n-1}$ and the claimed measurability of the sets $H_{k,n}(\alpha_1,\dots,\alpha_k)$ 
follows.






Set now, for $1\le j\le k$, $\HHH^j_{k,n}(\alpha_1,\dots,\alpha_k)$ to be the subset of $\partial T$ defined by
$$
\HHH^j_{k,n}(\alpha_1,\dots,\alpha_k)=
\Lambda^{-1}\left(H_{k,n}(\alpha_1,\dots,\alpha_k)\right)\cap\partial S(\alpha_j).
$$

We have the following properties, easy to verify.

\begin{enumerate}
 \item[(i)] $\HHH^j_{k,n}(\alpha_1,\dots,\alpha_k)$ is Borel measurable in $\partial T$;
 \item[(ii)] For each $1\le j\le k$ and each choice of distinct $\alpha_1,\dots,\alpha_k$ in $\II_n$,  $\Lambda$ maps $\HHH^j_{k,n}(\alpha_1,\dots,\alpha_k)$ bijectively onto $H_{k,n}(\alpha_1,\dots,\alpha_k)$;
\item[(iii)] The sets $\HHH^j_{k,n}(\alpha_1,\dots,\alpha_k)$ are mutually disjoint.
\end{enumerate}

We use now the sets just introduced to define a set family in $\partial T$. 
Namely, $\GG$ is the family of the sets having the form
$$
\coprod_{\genfrac{}{}{0pt}{}{k,n;1\le j\le k}{\alpha_1,\dots,\alpha_k\in\II_n}}
\left[\HHH^j_{k,n}(\alpha_1,\dots,\alpha_k)
\cap\Lambda^{-1}(A(k,n;j;\alpha_1,\dots,\alpha_k))\right],
$$
where each set $A(k,n;j;\alpha_1,\dots,\alpha_k)$ is Borel measurable in $X$. Since $\Lambda$ is continuous, $\GG$ is contained in the Borel $\sigma$-algebra of $\partial T$.

The family of sets $\GG$ has in addition the properties listed below.
\begin{enumerate}
 \item[(i)] $\GG$ is a $\sigma$-algebra;
\item[(ii)] The image under $\Lambda$ of each set in $\GG$ is measurable in $X$;
\item[(iii)] Each basic set $\partial S(\alpha)$ belongs to $\GG$.
\end{enumerate}

A consequence of (i),(iii) from the latter property list and (i) from the former property list, 
is that $\GG$ \it is \rm the Borel $\sigma$-algebra in $\partial T$ and that the image of a Borel set of $\partial T$ 
under $\Lambda$ is Borel in $X$. i.e., both $\Lambda$ and $\Lambda^{-1}$ map Borel sets to Borel sets.

Statement (i) is easily verified:
\begin{eqnarray*}
\partial T\setminus \left\{\coprod_{\genfrac{}{}{0pt}{}{k,n;1\le j\le k}{\alpha_1,\dots,\alpha_k\in\II_n}}
\left[\HHH^j_{k,n}(\alpha_1,\dots,\alpha_k)\cap
\Lambda^{-1}(A(k,n;j;\alpha_1,\dots,\alpha_k))\right]\right\}=
\crcr
\coprod_{\genfrac{}{}{0pt}{}{k,n;1\le j\le k}{\alpha_1,\dots,\alpha_k\in\II_n}}
\left[\HHH^j_{k,n}(\alpha_1,\dots,\alpha_k)\cap\Lambda^{-1}
\left(X\setminus A(k,n;j;\alpha_1,\dots,\alpha_k)\right)\right]
\end{eqnarray*}
and
\begin{eqnarray*}
\bigcup_\lambda\left\{\coprod_{\genfrac{}{}{0pt}{}{k,n;1\le j\le k}{\alpha_1,\dots,\alpha_k\in\II_n}}\left[\HHH^j_{k,n}(\alpha_1,\dots,\alpha_k)\cap\Lambda^{-1}(A_\lambda(k,n;j;\alpha_1,\dots,\alpha_k))\right]\right\}=
\crcr
\coprod_{\genfrac{}{}{0pt}{}{k,n;1\le j\le k}{\alpha_1,\dots,\alpha_k\in\II_n}}\left[\HHH^j_{k,n}(\alpha_1,\dots,\alpha_k)\cap\Lambda^{-1}\left(\bigcup_\lambda A(k,n;j;\alpha_1,\dots,\alpha_k)\right)\right]
\end{eqnarray*}

For statement (ii), observe that 

\begin{eqnarray*}
\Lambda\left(\HHH^j_{k,n}(\alpha_1,\dots,\alpha_k)\cap\Lambda^{-1}(A(k,n;j;\alpha_1,\dots,\alpha_k))\right)=
\crcr
H_{k,n}(\alpha_1,\dots,\alpha_k)\cap A(k,n;j;\alpha_1,\dots,\alpha_k),
\end{eqnarray*}

which is measurable in $X$.

We now consider (iii). For fixed $\alpha$ in $T$, we want to show that $\partial S(\alpha)$ is an element of $\mathcal G$. Consider the sets
$$
A=\coprod_{\beta\ge\alpha}\coprod_{k,\ n=d(\beta)}\coprod_{\alpha_1,\dots,\alpha_k:\ \alpha_1=\beta}\HHH^1_{k,n}(\alpha_1,\dots,\alpha_k)
$$
and
$$
B=\coprod_{o\le\gamma\le\alpha}\coprod_{k,n=d(\gamma)}\coprod_{\alpha_1,\dots,\alpha_k:\ \alpha_1=\gamma}\left(\HHH^1_{k,n}(\alpha_1,\dots,\alpha_k)\cap\Lambda^{-1}(\overline{Q_\alpha})\right).
$$

By definition, $A,B\in{\mathcal G}$. It is clear that $A\subseteq\partial S(\alpha)$. We show now that $B\subseteq\partial S(\alpha)$. Let 

$$
\zeta\in \HHH^1_{k,n}(\alpha_1,\dots,\alpha_k)\cap\Lambda^{-1}(\overline{Q_\alpha}),
$$

with $n=d(\gamma)$ and let $x=\Lambda(\zeta)$. Then: $x$ has $k$ preimages at level $n$ and  $\zeta$ is the the only preimage in $\partial S(\gamma)$. Suppose that $\zeta\notin\partial S(\alpha)$. Since $x\in\overline{Q}_\alpha$,there is some other preimage $\zeta^\prime\ne\zeta$ of $x$ which lies in $\partial S(\alpha)$. But $\partial S(\alpha)\subseteq\partial S(\gamma)$, hence, there are two preimages of $x$ in $\partial S(\gamma)$, a contradiction.

We now show that $\partial S(\alpha)\subseteq A\cup B$. Let $\zeta\in\partial S(\alpha)$, $\Lambda(\zeta)=x$, $N(x)=k$ and $n(x)=n$. Consider two cases. 
\begin{itemize}
\item[(a)] First suppose $n(x)\ge d(\alpha)$. Then there are $k$ distinct sets $\overline{Q}_{\alpha_1},\dots,\overline{Q}_{\alpha_k}$ at level $n$ such that $x\in H_{k,n}(\alpha_1,\dots,\alpha_k)$ and $\zeta\in \HHH^j_{k,n}(\alpha_1,\dots,\alpha_k)$ for some $1\le j\le k$. It has to be $\alpha_j\ge\alpha$, because otherwise $\alpha$ and $\alpha_j$ are not order related, hence $\partial S(\alpha)$ and $\partial S(\alpha_j)$ are disjoint and they can not both contain $\zeta$. 

\item[(b)] Suppose $n(x)<d(\alpha)$. Again,  $x\in H_{k,n}(\alpha_1,\dots,\alpha_k)$ and $\zeta\in \HHH^j_{k,n}(\alpha_1,\dots,\alpha_k)$ for some $1\le j\le k$. We want to prove that $\gamma:=\alpha_j<\alpha$. If such is not the case, then $S(\alpha_j)$ and $S(\alpha)$ are not order-related, as above, hence they are disjoint.
\end{itemize}
\endproof

We list the basic properties of $\Lambda^*$.

\begin{proposition}
\label{lambdaomega}
The measure ${\Lambda^*(\omega)}$ is a Borel measure on $\partial T$. Moreover,
\begin{enumerate}
 \item[(i)] $\Lambda_*({\Lambda^*(\omega)})=\omega$;
\item[(ii)] Let $N=\max\{N(x):\ x\in X\}$. Then, for each Borel measurable set $A$ in $\partial T$,
$$
\frac{\omega(\Lambda(A))}{N}\le{\Lambda^*(\omega)}(A)\le\omega(\Lambda(A)).
$$
\end{enumerate} 

About the supports, we have:
\begin{enumerate}
\item[(iii)] $\mbox{supp}(\Lambda^*\omega)\subseteq\Lambda^{-1}(\supp(\omega))$;
\item[(iv)] $\supp(\Lambda_*\nu)=\Lambda(\supp(\nu))$.
\item[(v)] $\supp(\omega)\subseteq\Lambda(\supp(\Lambda^*\omega))$.
\end{enumerate}
\end{proposition}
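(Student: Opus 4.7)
The plan is to let Lemma \ref{alidosi} carry all the nontrivial content: once we know that every Borel $A\subseteq\partial T$ admits a countable disjoint decomposition $A=\coprod_i A_i$ with each $\Lambda|_{A_i}$ injective and each image $\Lambda(A_i)$ Borel in $X$, everything reduces to the pointwise sandwich
\[
\mathbf{1}_{\Lambda(A)}(x)\le N_A(x)\le N(x)\le N,
\]
together with continuity of $\Lambda$ and compactness of $\partial T$. Compactness holds because each level $I_k$ is finite (Ahlfors regularity forces $|I_k|\le m(X)/(c_1\delta^{Qk})$), so $T$ is a finitely branching rooted tree and its Gromov boundary is compact.

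First I would check that \eqref{pulback} defines a Borel measure and then deduce (i) and (ii). For any Borel $A$, writing $N_A=\sum_i\mathbf{1}_{\Lambda(A_i)}$ shows that $N_A$ is Borel in $x$ and bounded by $N$, so the integral in \eqref{pulback} makes sense; countable additivity then follows from $N_{\coprod_j B_j}=\sum_j N_{B_j}$ and monotone convergence. The sandwich immediately gives (ii) after dividing by $N(x)\le N$. For (i), every $\zeta\in\Lambda^{-1}(x)$ with $x\in E$ automatically lies in $\Lambda^{-1}(E)$, so $N_{\Lambda^{-1}(E)}(x)=N(x)\mathbf{1}_E(x)$ and the integrand in \eqref{pulback} reduces to $\mathbf{1}_E$, yielding $\Lambda^*\omega(\Lambda^{-1}(E))=\omega(E)$.

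For the support statements: (iii) follows from the upper bound in (ii), since an $\omega$-null open neighborhood $U$ of $\Lambda(\zeta)$ pulls back by continuity to an open neighborhood of $\zeta$ with $\Lambda^*\omega(\Lambda^{-1}(U))\le\omega(U)=0$. For (iv), the set $\Lambda(\supp\nu)$ is closed by continuity of $\Lambda$ and compactness of $\supp\nu$; the containment $\Lambda(\supp\nu)\subseteq\supp(\Lambda_*\nu)$ is immediate because any neighborhood of $\Lambda(\zeta)$ pulls back to an open neighborhood of $\zeta$ of positive $\nu$-mass, while the reverse inclusion follows because on the open set $U=X\setminus\Lambda(\supp\nu)$ the preimage $\Lambda^{-1}(U)$ is disjoint from $\supp\nu$. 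Finally, (v) is an immediate consequence of (iv) applied to $\nu=\Lambda^*\omega$ combined with (i): $\supp\omega=\supp(\Lambda_*\Lambda^*\omega)=\Lambda(\supp\Lambda^*\omega)$.

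The main obstacle is already past us in Lemma \ref{alidosi}, whose heart was the construction of the $\sigma$-algebra $\mathcal{G}$ and the consequent Borel measurability of $x\mapsto N_A(x)$ and of $\Lambda(A)$ for arbitrary Borel $A$. After that, the uniform bound $N(x)\le c_5$ from Lemma \ref{onetoone} prevents any integrability issues, and the remaining arguments are routine.
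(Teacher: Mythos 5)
Your argument is correct and follows essentially the same route as the paper: Lemma \ref{alidosi} supplies the measurability of $N_A$ and of $\Lambda(A)$, the pointwise sandwich $\mathbf{1}_{\Lambda(A)}\le N_A\le N(x)\le N$ gives (ii), the identity $N_{\Lambda^{-1}(E)}=N\cdot\mathbf{1}_E$ gives (i), and the support statements are handled by the same pull-back/push-forward of null neighborhoods. The one genuine (and welcome) deviation is (v): the paper proves it directly via an $\omega$-a.e.\ argument showing $\Lambda^{-1}(x)\subseteq\supp(\Lambda^*\omega)$ for $\omega$-a.e.\ $x$, whereas you obtain it as a formal consequence of (i) and (iv), which is cleaner and in fact yields the equality $\supp(\omega)=\Lambda(\supp(\Lambda^*\omega))$, slightly stronger than the stated inclusion.
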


A consequence of (v) is that $\Lambda^{-1}(\supp(\omega))\subseteq\Lambda^{-1}(\Lambda(\supp(\Lambda^*\omega)))$.  We believe that more is true:  
\begin{conjecture}
$\Lambda^{-1}(\supp(\omega))\subseteq\supp(\Lambda^*\omega)$ (hence, by (iii), $\Lambda^{-1}(\supp(\omega))=\supp(\Lambda^*\omega)$). 
\end{conjecture}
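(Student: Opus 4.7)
The plan is to fix $\zeta \in \Lambda^{-1}(\supp \omega)$, set $x := \Lambda(\zeta) \in \supp \omega$, and show that every basic open neighborhood of $\zeta$ in $\partial T$ carries positive $\Lambda^*\omega$-mass. Since the cl-open sets $\partial\SSS(\zeta_n)$ — where $\zeta_n$ is the vertex of $\PPP(\zeta)$ at level $n$ — form a neighborhood base at $\zeta$, it is enough to prove $\Lambda^*\omega(\partial\SSS(\zeta_n)) > 0$ for each $n \geq 0$.

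First I would identify $\Lambda(\partial\SSS(\zeta_n)) = \overline{Q}_{\zeta_n}$. One inclusion is immediate from the definition of $\Lambda$; for the other, I would localize the surjectivity argument of Lemma \ref{onto}, using Lemma \ref{localizing} to produce, for every $y \in \overline{Q}_{\zeta_n}$, a descending chain of closed dyadic cubes rooted at $\overline{Q}_{\zeta_n}$ and containing $y$, extended by the Axiom of Choice to an infinite geodesic through $\zeta_n$ with $\Lambda$-image $y$. Plugging this identification into Proposition \ref{lambdaomega}(ii) gives
$$
\Lambda^*\omega(\partial\SSS(\zeta_n)) \;\geq\; \frac{\omega(\overline{Q}_{\zeta_n})}{N},
$$
so the whole problem reduces to showing $\omega(\overline{Q}_{\zeta_n}) > 0$ for every $n$.

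In the easy case $x \in Q_{\zeta_n}$, the openness of $Q_{\zeta_n}$ granted by Christ's Theorem and the defining property of $\supp\omega$ immediately give $\omega(Q_{\zeta_n}) > 0$, hence the bound. The hard part will be the case $x \in \partial Q_{\zeta_n}$, i.e., $x$ in the exceptional boundary set $F$ of Corollary \ref{eleffe}. Here $Q_{\zeta_n}$ is not a neighborhood of $x$, and by Lemma \ref{edue}(i) there are at most $c_5$ distinct cubes at level $d(\zeta_n)$ whose closure contains $x$; a small metric ball about $x$ is covered by the union of these $\leq c_5$ closures. That union has positive $\omega$-measure because $x \in \supp\omega$, but one must then argue that the portion inside $\overline{Q}_{\zeta_n}$ — as opposed to the portions inside the finitely many neighboring $\overline{Q}_{\beta}$ — is itself nontrivial.

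I expect this last step to be the genuine obstacle, and in fact a simple one-dimensional example — $X = [0,1]$ with $\omega$ the Lebesgue measure restricted to $[0, 1/4]$, and $\zeta$ the geodesic approaching $x = 1/4$ from the right through $(1/4,1/2), (1/4,3/8),\dots$ — gives $\omega(\overline{Q}_{\zeta_n}) = 0$ for all $n \geq 2$ while still $\zeta \in \Lambda^{-1}(\supp\omega)$. This suggests that Proposition \ref{lambdaomega}(ii) alone cannot close the argument, and that a complete proof of the conjecture would need either an additional regularity hypothesis forbidding $\omega$ from concentrating on $F$ away from $\overline{Q}_{\zeta_n}$ (for instance $\omega \ll m$, or $\omega(\partial Q_\alpha) = 0$ for every $\alpha$), or a refined version of $\Lambda^*$ that disintegrates $\omega|_F$ according to its actual local distribution at each boundary point rather than uniformly across preimages.
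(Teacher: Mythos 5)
You should first be aware that the paper offers \emph{no} proof of this statement: it is stated as a Conjecture and is immediately followed by ``Unfortunately we do not have a proof for this,'' so there is nothing to compare your argument against. What you have produced is therefore worth more than you seem to claim for it. The positive half of your plan is sound: $\Lambda(\partial S(\zeta_n))=\overline{Q}_{\zeta_n}$ does hold (the nontrivial inclusion follows from Lemma \ref{localizing} together with the K\"onig-type selection argument of Lemma \ref{onto} run inside $S(\zeta_n)$), and Proposition \ref{lambdaomega}(ii) then reduces the whole question to whether $\omega(\overline{Q}_{\zeta_n})>0$ for every $n$. But the ``obstacle'' you identify in the boundary case is not a defect of your method --- it is a counterexample to the conjecture itself. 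In your example, with $\zeta$ the preimage of $x=1/4$ that approaches from the right, one has
$$
\Lambda^*\omega(\partial S(\zeta_n))\le\omega\bigl(\Lambda(\partial S(\zeta_n))\bigr)=\omega(\overline{Q}_{\zeta_n})=0
$$
for all large $n$, because $\overline{Q}_{\zeta_n}\cap\supp(\omega)=\{1/4\}$ is $\omega$-null; since the sets $\partial S(\zeta_n)$ form a neighborhood base at $\zeta$ in $\partial T$, this gives $\zeta\notin\supp(\Lambda^*\omega)$, while $\zeta\in\Lambda^{-1}(1/4)\subseteq\Lambda^{-1}(\supp(\omega))$. This is consistent with Proposition \ref{lambdaomega}(v), whose proof only yields $\Lambda^{-1}(x)\subseteq\supp(\Lambda^*\omega)$ for $\omega$-almost every $x$, not for every $x$ in $\supp(\omega)$. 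Modulo the routine verification that the dyadic intervals of $[0,1]$ fit the Christ framework (they do, up to harmless constants), you have refuted the statement as it stands; and your closing remark correctly isolates the extra hypotheses under which the inclusion \emph{is} true and your argument closes --- namely that $\omega$ charge no cube boundaries, i.e.\ $\omega(F)=0$ for the exceptional set $F$ of Corollary \ref{eleffe} (which holds in particular when $\omega\ll m$), or that $N\equiv1$ so that $\Lambda$ is injective. I would encourage you to write the example up carefully as a remark rather than abandon it as a failed proof.
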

Unfortunately we do not have a proof for this.

\begin{proof}[Proof of Proposition \ref{lambdaomega}]
First, we show that the function $N_A$ is measurable, so that the integral defining ${\Lambda^*(\omega)}$ makes sense. By Lemma \ref{alidosi}, the set $A$ can be decomposed as the disjoint, countable union of measurable subsets $A_i$ of $\partial T$, with the property that $\Lambda$ is $1-1$ on each of them and $\Lambda(A_i)$ is measurable in $X$. Then,
$$
N_A=\sum_i N_{A_i}=\sum_i\chi_{\Lambda(A_i)},
$$
which is Borel measurable.

The set function ${\Lambda^*(\omega)}$ is additive. Let $\{A_n\}_{n=0}^\infty$ be a disjoint family of measurable sets in $\partial T$:
\begin{eqnarray*}
\int_X\frac{N_{\cup{A_n}}(x)}{N(x)}d\omega(x)&=&
\int_X\frac{\sharp\{\zeta\in \cup A_n:\ \Lambda(\zeta)=x\}}{N(x)}d\omega(x)\crcr
&=&\sum_n\int_X\frac{\sharp\{\zeta\in A_n:\ \Lambda(\zeta)=x\}}{N(x)}d\omega(x)\crcr
&=&\sum_n\int_X\frac{N_{{A_n}}(x)}{N(x)}d\omega(x)
\end{eqnarray*}

Estimate (ii) follows from Lemmas \ref{onetoone} and \ref{onto}.  Given a measurable set $A$ in $\partial T$ and $x$ in $\Lambda(A)$: $1\le N_A(x)\le N(x)\le c$. To show (i), let $E$ be a measurable subset of $X$:

\begin{eqnarray*}
\Lambda_*({\Lambda^*(\omega)})(E)&=&{\Lambda^*(\omega)}(\Lambda^{-1}(E))=\int_X\frac{N_{\Lambda^{-1}(E)}(x)}{N(x)}d\omega(x)\crcr
&=&\int_X\frac{\sharp\{\zeta\in \Lambda^{-1}(E):\ \Lambda(\zeta)=x\}}{N(x)}d\omega(x)\crcr
&=&\int_X\chi_E(x)\frac{N(x)}{N(x)}d\omega(x)=\omega(E).
\end{eqnarray*}

We prove (v). Let $K=\supp(\Lambda^*\omega)$. Then,

\begin{eqnarray*}
0&=&\Lambda^*\omega(\partial T\setminus K)=
\int_X\frac{N_{\partial T\setminus K}(x)}{N(x)}d\omega(x)
\end{eqnarray*}
if and only if $\omega-a.e.\ (x)$ we have $0=N_{\partial T\setminus K}(x)=\sharp(\Lambda^{-1}(x)\cap[\partial T\setminus K])$, which is equivalent to having $\Lambda^{-1}(x)\subseteq K$ for $\omega-a.e.\ x$; hence, $x\in\Lambda(K)$ for $\omega-a.e.\ x$. Since $\Lambda(K)$ is closed in $X$, $\supp(\omega)\subseteq\Lambda(K)=\Lambda(\supp(\Lambda^*\omega))$.

We prove (iii). Let $E=\supp(\omega)$. If $x\in E$, $\Lambda^{-1}(x)\subseteq\Lambda^{-1}(E)$, hence 
$\Lambda^{-1}(x)\cap[\partial T\setminus\Lambda^{-1}(E)]=\emptyset$. Thus,
\begin{eqnarray*}
 \Lambda^*\omega(\partial T\setminus\Lambda^{-1}(E))&=&
\int_X
\frac{\sharp(\Lambda^{-1}(x)\cap[\partial T\setminus\Lambda^{-1}(E)])}{\sharp(\Lambda^{-1}(x))}
d\omega(x)\crcr
&=&\int_{X\setminus E}
\frac{\sharp(\Lambda^{-1}(x)\cap[\partial T\setminus\Lambda^{-1}(E)])}{\sharp(\Lambda^{-1}(x))}
d\omega(x)\crcr
&=&0.
\end{eqnarray*}
i.e., $\supp(\Lambda^*\omega)\subseteq\Lambda^{-1}(E)=\Lambda^{-1}(\Lambda^*\omega)$.

Statement (iv) is well known and we include a proof for completeness. For $F$ closed in $\partial T$,
\begin{eqnarray*}
\supp(\Lambda_*\omega)\subseteq F&\iff&\crcr
0&=&\Lambda_*\omega(X\setminus F)=\omega(\Lambda^{-1}(X\setminus F))\crcr
&=&\omega(\partial T\setminus\Lambda^{-1}(F))\crcr
&\iff&\supp(\omega)\subseteq\Lambda^{-1}(F)\crcr
&\iff&\Lambda(\supp(\omega))\subseteq F.
\end{eqnarray*}
\end{proof}

\begin{remark}
\label{noteonmu} 
For the measure $ m$ itself, we have that $N_A(x)/N(x)=1$, $ m$-a.e. 
For each $\alpha\in T$ we have $m(Q_\alpha)=\tilde{m}(\partial S(\alpha))$: 
$ m$, the measure on $X$, and $\tilde{ m}$, the measure on $\partial T$, 
coincide on families of generators of the corresponding Borel $\sigma$-algebras.
Hence, $m(A)=\tilde{m}(\Lambda^{-1}(A))$ for all Borel sets $A$ in $X$ and  
$m(\Lambda(B))=\tilde{m}(B)$ for all Borel sets $B$ in $\partial T$.
We might, and we will,  identify $m$ and $\tilde{m}$.
\end{remark}

We will also need a ``localized'' version of $\Lambda^*$. Let $F$ be a closed subset of 
$\partial T$ and $\omega$ be a Borel measure on $X$. Then,
$$
\Lambda^*_F(A):=\Lambda^*(F\cap A),
$$
is the restriction of $\Lambda^*$ to $F$. Proposition \ref{lambdaomega} localizes to:

\begin{corollary}
\label{lastminute} 
Let $\omega$ be a positive Borel measure on $X$ and $F$ a closed set in $\partial T$. Then,
\begin{enumerate}
\item[(i)] $\supp(\Lambda^*_F\omega)\subseteq\Lambda^{-1}(\supp(\omega))\cap F$.
\item[(ii)] $\Lambda^*_F\omega(A)\approx\omega(\Lambda(F\cap A))$ when $A\subseteq\partial T$
is measurable.
\item[(iii)] $\Lambda_*(\Lambda^*_F\omega)(E)\approx\omega(E\cap\Lambda(F))$ when $E\subseteq X$
is measurable.
\end{enumerate}
\end{corollary}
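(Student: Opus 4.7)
The plan is to derive all three assertions directly from Proposition \ref{lambdaomega} together with the definition $\Lambda^*_F\omega(A)=\Lambda^*\omega(F\cap A)$. Since $F$ is closed, hence Borel, the set $F\cap A$ is Borel whenever $A$ is, so $\Lambda^*_F\omega$ is a well-defined Borel measure on $\partial T$ (inheriting countable additivity from $\Lambda^*\omega$). I expect no genuine obstacle: all the serious content, in particular the measurability machinery supplied by Lemma \ref{alidosi} and the support identities of Proposition \ref{lambdaomega}, has already been established.

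For assertion (ii), I would simply apply Proposition \ref{lambdaomega}(ii) to the Borel set $F\cap A$ in place of $A$, obtaining
$$
\frac{\omega(\Lambda(F\cap A))}{N}\le \Lambda^*_F\omega(A)=\Lambda^*\omega(F\cap A)\le \omega(\Lambda(F\cap A)),
$$
which is precisely the claimed $\approx$-equivalence.

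For assertion (i), I would establish two containments separately and intersect them. First, since
$$\Lambda^*_F\omega(\partial T\setminus F)=\Lambda^*\omega\bigl(F\cap(\partial T\setminus F)\bigr)=\Lambda^*\omega(\emptyset)=0$$
and $F$ is closed, $\supp(\Lambda^*_F\omega)\subseteq F$. Second, because $F\cap A\subseteq A$ for every Borel $A$, as measures $\Lambda^*_F\omega\le \Lambda^*\omega$; comparison of supports together with Proposition \ref{lambdaomega}(iii) gives
$$
\supp(\Lambda^*_F\omega)\subseteq\supp(\Lambda^*\omega)\subseteq\Lambda^{-1}(\supp(\omega)).
$$
Intersecting yields $\supp(\Lambda^*_F\omega)\subseteq \Lambda^{-1}(\supp(\omega))\cap F$.

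For assertion (iii), I would apply (ii) with $A=\Lambda^{-1}(E)$, which gives
$$
\Lambda_*(\Lambda^*_F\omega)(E)=\Lambda^*_F\omega(\Lambda^{-1}(E))\approx \omega\bigl(\Lambda(F\cap\Lambda^{-1}(E))\bigr),
$$
and then reduce the set on the right. The elementary identity $\Lambda(F\cap\Lambda^{-1}(E))=\Lambda(F)\cap E$ (using only that $\Lambda$ is a map onto $X$ by Lemma \ref{onto}: one inclusion is trivial, and for the reverse, any $y\in\Lambda(F)\cap E$ is $\Lambda(\zeta)$ for some $\zeta\in F$ that must then lie in $\Lambda^{-1}(E)$) concludes the proof of (iii). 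The bookkeeping of these set identities is essentially the only thing to verify; no new analytic input is needed.
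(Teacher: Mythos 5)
Your proposal is correct and follows essentially the same route as the paper, which simply cites parts (i)--(iii) of Proposition \ref{lambdaomega}; you have merely filled in the routine details (the closedness of $F$ for the support containment, and the set identity $\Lambda(F\cap\Lambda^{-1}(E))=\Lambda(F)\cap E$, which in fact holds for any map, so the appeal to surjectivity is not even needed). No gaps.
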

\begin{proof}
(i) follows from (iii) in Proposition \ref{lambdaomega}. (ii) follows from (ii) in Proposition
\ref{lambdaomega}. (iii) follows form (i) and (ii) in Proposition \ref{lambdaomega}.
\end{proof}

\section{The Muckenhoupt-Wheeden Inequality on Graphs}
\label{sectmw}

\it

In this section we prove the Muckenhoupt-Wheeden inequality on graphs. In the linear case, the inequality can be proved by Fubini's Theorem and easy geometric considerations. 
The main point of the inequality consists in inverting ``on average'' 
the $\ell^1\subseteq\ell^\infty$ inclusion. We need its corollary: an inversion ``on average'' 
of the $\ell^1\subseteq\ell^{p^\prime}$ inclusion, which was independently proved by T. Wolff by means 
of a completely different argument. The other half, which inverts $\ell^{p^\prime}\subseteq\ell^\infty$, 
was independently proved in \cite{AR}. The usefulness of the inequality will be shown in the next section.

\rm

\medskip

Fix $0<s<1$ and $q\ge1$.  For each $x$ in $X$, let $P_G(x)=\{\alpha\in G:\ d_G(\alpha,\Lambda^{-1}(x))\le1\}$, i.e., the set of those $\alpha$ in $G$ having distance at most one from a tree-geodesic ending at $x$.

Given a Borel measure $\omega$ on $X$, define

$$
\II_G\omega(x)=\sum_{\alpha\in P_G(x)}\frac{\omega(\alpha)}{ m(\alpha)^s},
$$
and
$$
\SSS_G\omega(x)=\sup_{\alpha\in P_G(x)}\frac{\omega(\alpha)}{ m(\alpha)^s}.
$$

Here, $\omega(\alpha):=\omega(\overline{Q^k_\alpha})$ and $ m(\alpha):= m(\overline{Q^k_\alpha})= m(Q^k_\alpha)$ are the measures of the corresponding sets. 

Clearly, $\SSS_G\omega\le\II_G\omega$, pointwise. The following surprising theorem of Muckenhoupt and Wheeden shows that, on average, the opposite inequality holds, as well.
\begin{theorem}
 \label{thmb} 
Given $q\ge1$, there is a constant $c_{7}$ such that 
\begin{equation}
\label{mw}
\int_X\II_G\omega(x)^qd m(x)\le c_{7}\int_X\SSS_G\omega(x)^qd m(x).
\end{equation}
\end{theorem}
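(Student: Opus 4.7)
The plan is to prove \eqref{mw} via a good-$\lambda$ inequality. A preliminary reduction flattens the graph to a tree: for each vertex $\beta \in T$ define the \emph{aggregated coefficient}
$$
\tilde a_\beta \;:=\; \sum_{\alpha \in P_G(y) \cap \{d(\alpha)=d(\beta)\}} \frac{\omega(\alpha)}{m(\alpha)^s}
\qquad (y \in Q_\beta),
$$
which, by bounded graph connectivity (Lemma \ref{edue}), is a sum of $\le c_6$ terms and depends only on $\beta$ (it bundles $\beta$ itself, its horizontal $G$-neighbors, and its tree-siblings). One then checks directly that
$$
\II_G\omega(y) \;=\; \sum_{\beta \in P_T(y)} \tilde a_\beta,
\qquad
\SSS_G\omega(y) \;\le\; \max_{\beta \in P_T(y)} \tilde a_\beta \;\le\; c_6\, \SSS_G\omega(y),
$$
where $P_T(y)$ is the tree geodesic from $o$ to $y$, so it suffices to compare the tree-sum and tree-supremum of the family $\{\tilde a_\beta\}$.

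Fix $\lambda>0$ and $\gamma \in (0,1/(4c_6))$, and let $B := \{\II_G\omega > 2\lambda,\ \SSS_G\omega \le \gamma\lambda\}$. Perform a Calder\'on--Zygmund stopping-time decomposition: for each $y$ with $\II_G\omega(y)>\lambda$ let $\hat\alpha(y)$ be the shallowest ancestor with $\sum_{\beta \supsetneq \hat\alpha(y)} \tilde a_\beta > \lambda$. The cubes $\mathcal F_\lambda := \{\hat\alpha(y)\}$ form a disjoint family whose union is $\{\II_G\omega>\lambda\}$. By minimality of $\hat\alpha$, $\sum_{\beta \supsetneq \hat\alpha} \tilde a_\beta \le \lambda + \tilde a_{\hat\alpha^-}$ where $\hat\alpha^- = \operatorname{parent}(\hat\alpha)$; for $y \in B \cap \hat\alpha$ both $\tilde a_{\hat\alpha^-}$ and $\tilde a_{\hat\alpha}$ are $\le c_6 \gamma\lambda$, so the \emph{inner potential}
$$
J_{\hat\alpha}(y) \;:=\; \sum_{\beta \subsetneq \hat\alpha,\ y \in \beta} \tilde a_\beta
$$
must exceed $(1 - 2c_6\gamma)\lambda \ge \lambda/2$ on $B \cap \hat\alpha$.

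The core estimate is $\int_{\hat\alpha} J_{\hat\alpha}\,dm \lesssim \gamma\lambda\, m(\hat\alpha)$. Swapping sum and integral gives $\int_{\hat\alpha} J_{\hat\alpha}\,dm = \sum_{\beta \subsetneq \hat\alpha} \tilde a_\beta\, m(\beta)$; Ahlfors regularity ($m(\beta) \approx m(\alpha)$ for same-level $G$-neighbors $\alpha,\beta$) makes this comparable to $\sum_{\alpha} \omega(\alpha)\, m(\alpha)^{1-s}$ over cubes $\alpha$ at levels $k>d(\hat\alpha)$ that are $G$-adjacent to a tree-descendant of $\hat\alpha$. Writing $\omega(\alpha) = \int_\alpha d\omega$ and swapping again, a single $z$ contributes at most $\sum_{k > d(\hat\alpha)} \delta^{Qk(1-s)} \approx m(\hat\alpha)^{1-s}$ (a convergent geometric series, since $1-s>0$), and only $z$ lying in the graph-neighborhood $N^*(\hat\alpha)$ of $\hat\alpha$ appears. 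Thus
$$
\int_{\hat\alpha} J_{\hat\alpha}\,dm \;\lesssim\; \omega(N^*(\hat\alpha))\, m(\hat\alpha)^{1-s}.
$$
Each level-$d(\hat\alpha)$ cube in $N^*(\hat\alpha)$ belongs to $P_G(y)$ for every $y \in B \cap \hat\alpha$, so $\SSS_G\omega \le \gamma\lambda$ on $B$ forces $\omega(N^*(\hat\alpha)) \lesssim \gamma\lambda\, m(\hat\alpha)^s$. Hence $\int_{\hat\alpha} J_{\hat\alpha}\,dm \lesssim \gamma\lambda\, m(\hat\alpha)$, and Chebyshev gives $m(B \cap \hat\alpha) \lesssim \gamma\, m(\hat\alpha)$. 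Summing over $\mathcal F_\lambda$ yields the good-$\lambda$ inequality $m(B) \le C\gamma\, m(\{\II_G\omega > \lambda\})$.

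Multiplying by $q\lambda^{q-1}$ and integrating over $\lambda$, then choosing $\gamma$ so small that $C\gamma \cdot 2^q < 1/2$, lets us absorb the $\II_G\omega$ term and conclude \eqref{mw}. Absorption is made rigorous by first applying the argument to the truncated potentials $\II_G^{(N)}\omega(y) := \sum_{\alpha \in P_G(y),\,d(\alpha)\le N} a_\alpha$, which are bounded (hence in $L^q(m)$), and then passing $N \to \infty$ by monotone convergence. The main obstacle is the bookkeeping of graph-neighbor contributions in the inner sum: $N^*(\hat\alpha)$ may slightly spill outside $\hat\alpha$, and one must exploit both the bounded connectivity of $G$ and the uniform bound $\SSS_G\omega \le \gamma\lambda$ on $B \cap \hat\alpha$ simultaneously, without disturbing the geometric cancellation that requires $s < 1$.
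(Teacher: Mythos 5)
Your argument is correct in substance and shares the paper's overall architecture --- a good-$\lambda$ inequality obtained from a stopping-time decomposition of $\{\II_G\omega>\lambda\}$ into maximal cubes, combined with the dichotomy on whether $\SSS_G\omega\le\gamma\lambda$ holds somewhere on the cube --- but the engine of the local estimate is genuinely different. The paper first proves the distributional inequality $m(\II_G\omega>\lambda)\le c(\|\omega\|_1/\lambda)^{1/s}$ (Step 1: the maximal function $M_G^{m}$, its weak $(1,1)$ bound, and an optimization over the splitting level $k$), and then applies it to the localized measure $\omega_1=\omega|_{\tilde S(\alpha)}$, whose total mass is controlled by $\SSS_G\omega(x_0)\,m(\alpha)^s\le\epsilon\lambda\,m(\alpha)^s$. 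You bypass the maximal function entirely: you bound $\int_{\hat\alpha}J_{\hat\alpha}\,dm$ by Fubini together with the geometric series $\sum_{k>d(\hat\alpha)}\delta^{Qk(1-s)}\approx m(\hat\alpha)^{1-s}$ (this is where $s<1$ enters, exactly where it enters the paper's Step 1) and finish with Chebyshev. This yields a shorter self-contained proof whose good-$\lambda$ constant is linear in $\gamma$ (the paper gets $\epsilon^{1/s}$, smaller but irrelevant for absorption); what it gives up is the weak-type estimate, which has independent interest. Your preliminary aggregation of the graph coefficients onto the tree is also a clean way to package the bounded-connectivity bookkeeping that the paper handles through the sets $\tilde S(\alpha)$. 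The one step needing explicit repair is the claim that every level-$d(\hat\alpha)$ cube meeting $N^*(\hat\alpha)$ lies in $P_G(y)$ for $y\in\hat\alpha$: since $N^*(\hat\alpha)$ reaches out to distance comparable to $(1+c_4)\delta^{d(\hat\alpha)}$, such a cube may sit at graph distance $2$ or $3$ from $\hat\alpha$ rather than $1$. The standard fix is to test $\SSS_G\omega(y)\le\gamma\lambda$ instead on the boundedly many cubes at a fixed coarser level $d(\hat\alpha)-h$ (with $\delta^{-h}$ dominating the geometric constants) that cover $N^*(\hat\alpha)$; each of these is a $G$-neighbor of the level-$(d(\hat\alpha)-h)$ ancestor of $\hat\alpha$, hence does belong to $P_G(y)$, and its $m$-measure is still $\approx m(\hat\alpha)$. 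With that adjustment, and the paper's own convention of discarding the null set $F$ of cube boundaries so that $\Lambda^{-1}(y)$ is a single geodesic, the proof closes.
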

The inequality was proved in \cite{MW} for Riesz potentials. In \cite{HW}, T. Wolff independently rediscovered, with a different proof, one-half of it. In our context, Wolff's inequality reads:

$$
\int_X\II_G\omega(x)^qd m(x)=\int_X\left(\sum_{\alpha\in P_G(x)}
\frac{\omega(\alpha)}{ m(\alpha)^{s}}\right)^qd m(x)
\le C\int_X\sum_{\alpha\in P_G(x)}\frac{\omega(\alpha)^q}{ m(\alpha)^{sq}}d m(x).
$$

The left hand side is related with a classical capacity on $X$, the right hand side with a new capacity on the tree, 
as we will see below. The equivalence of the two definitions was the key to extend to the nonlinear case important features of linear potential theory \cite{HW}. The other half of (\ref{mw}),

\begin{equation}\label{ar}
\int_X\sum_{\alpha\in P_G(x)}\frac{\omega(\alpha)^q}{ m(\alpha)^{sq}}d m(x)
\le C \int_X\SSS_G\omega(x)^qd m(x),
\end{equation}
was later rediscovered in \cite{AR}, with a completely different proof. 

For ease of the reader, we give the proof of Theorem \ref{thmb} in the present context.  First, define the maximal function
$$
M_G^ m\omega(x)=\sup_{\alpha\in P_G(x)}\frac{\omega(\alpha)}{ m(\alpha)}.
$$
We then have the following well-known lemma whose proof is a standard argument.
\begin{lemma}\label{maximal}
 The maximal function $M_G^ m$ is bounded on $L^\infty$ and on $L^{1,\infty}$,
$$
 m(\{M_G^ m\omega>\lambda\})\lambda\le c\|\omega\|_1
$$
\end{lemma}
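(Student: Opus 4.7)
The $L^\infty$ bound is immediate: if $d\omega = f\,dm$ with $\|f\|_{L^\infty(dm)} \le C$, then for every $\alpha \in G$ we have $\omega(\alpha)/m(\alpha) = m(\alpha)^{-1}\int_{\overline{Q}_\alpha} f\,dm \le C$, so the supremum defining $M_G^m\omega(x)$ is bounded by $C$ at every point. The substance of the lemma is therefore the weak-type $(1,1)$ estimate, which I will prove by a dyadic Vitali-style argument that pairs a tree-maximal selection with the bounded connectivity of the graph $G$.

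Fix $\lambda>0$ and let $E_\lambda=\{x\in X:M_G^m\omega(x)>\lambda\}$. For each $x\in E_\lambda$ there is some $\alpha\in P_G(x)$ with $\omega(\alpha)>\lambda m(\alpha)$; let $\mathcal{A}_\lambda$ denote the family of all such vertices $\alpha$ (an assertion about $\alpha$ alone, independent of $x$). Let $\mathcal{A}_\lambda^\ast$ be the subfamily of elements that are \emph{maximal in the tree order} on $T$, i.e.\ whose proper tree-ancestors do not belong to $\mathcal{A}_\lambda$; every element of $\mathcal{A}_\lambda$ has a unique ancestor in $\mathcal{A}_\lambda^\ast$, because the ancestry chain of any vertex is finite (it terminates at the root $o$ at level $0$). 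For each $\alpha\in\mathcal{A}_\lambda^\ast$ set
$$
N(\alpha)=\bigcup\{\overline{Q}_\beta:\beta\in G,\ d_G(\alpha,\beta)\le 1\}.
$$
I claim that $E_\lambda\subseteq\bigcup_{\alpha\in\mathcal{A}_\lambda^\ast}N(\alpha)$. Indeed, given $x\in E_\lambda$, pick $\gamma\in P_G(x)\cap\mathcal{A}_\lambda$ and let $\alpha\in\mathcal{A}_\lambda^\ast$ be its tree-maximal ancestor (possibly $\alpha=\gamma$). By definition of $P_G(x)$, $\gamma$ is $G$-adjacent (within one graph edge) to some vertex $\gamma'$ on a tree-geodesic $P_T(\xi)$ with $\Lambda(\xi)=x$; then the tree-ancestors of $\gamma'$ at all levels $\le d(\alpha)$ also sit on $P_T(\xi)$, so $x$ lies in the closure of one of them, and this ancestor is either $\alpha$ itself or a $G$-neighbor of $\alpha$ at the same level (by comparing the two tree-geodesics and using that $d_G(\gamma,\gamma')\le 1$ propagates a single-edge discrepancy upward). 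In either case $x\in N(\alpha)$.

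The remaining step is the measure estimate. By Lemma \ref{edue}(ii) the graph $G$ has uniformly bounded connectivity, so the union defining $N(\alpha)$ has at most $c_6+1$ summands. Since all the $\overline{Q}_\beta$ appearing in $N(\alpha)$ are at level $d(\alpha)$ or $d(\alpha)\pm 1$, Ahlfors regularity \eqref{ahlfors} gives $m(\overline{Q}_\beta)\approx m(\overline{Q}_\alpha)$, hence $m(N(\alpha))\le c\,m(\alpha)$. The cubes $\{Q_\alpha\}_{\alpha\in\mathcal{A}_\lambda^\ast}$ are pairwise disjoint as open sets (antichain in the tree), so using $\omega(\alpha)>\lambda m(\alpha)$ and the finite multiplicity of closure overlaps from Lemma \ref{edue}(i) together with $m(F)=0$ (Corollary \ref{eleffe}) we obtain
$$
m(E_\lambda)\le\sum_{\alpha\in\mathcal{A}_\lambda^\ast}m(N(\alpha))\le c\sum_{\alpha\in\mathcal{A}_\lambda^\ast}m(\alpha)\le\frac{c}{\lambda}\sum_{\alpha\in\mathcal{A}_\lambda^\ast}\omega(Q_\alpha)\le\frac{c}{\lambda}\|\omega\|_1,
$$
as desired. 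The main subtlety is the covering step: the operator uses graph neighborhoods $P_G(x)$ rather than only tree-geodesics, so the pure tree-maximal selection must be enlarged by a single $G$-step, and the verification that this captures every $x\in E_\lambda$ hinges on Lemma \ref{edue}(ii) bounding the horizontal spread, and on the observation that single-edge $G$-discrepancies between two tree-geodesics only inflate the witnessing cube by a bounded $G$-neighborhood.
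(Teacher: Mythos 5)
The paper offers no proof of this lemma at all --- it is introduced as ``well-known'' with ``a standard argument'' --- so your job here was to supply the standard Vitali-type covering argument adapted to the graph $G$, and you have done so correctly in its essential structure: select the tree-maximal vertices $\alpha$ with $\omega(\alpha)>\lambda m(\alpha)$, enlarge each by a single $G$-step to a neighborhood $N(\alpha)$ of comparable measure (bounded connectivity, Lemma \ref{edue}(ii), plus Ahlfors regularity), and sum over the resulting antichain. Two steps deserve to be tightened. First, the ``propagation upward'' of a horizontal $G$-edge is true but should be stated as the explicit monotonicity fact that makes it work: if $\gamma\genfrac{}{}{0pt}{}{\sim}{G}\gamma'$ at level $n$ and $\gamma_k,\gamma'_k$ are their level-$k$ ancestors with $k\le n$, then $\rho(Q_{\gamma_k},Q_{\gamma'_k})\le\rho(Q_\gamma,Q_{\gamma'})\le\delta^n\le\delta^k$, so $d_G(\gamma_k,\gamma'_k)\le1$; and in the one remaining case ($\alpha=\gamma$ a child of the geodesic vertex $\gamma'$) the witnessing cube is the parent, a $G$-neighbor at level $d(\alpha)-1$ rather than ``at the same level,'' though it still lies in your $N(\alpha)$. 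Second, your final inequality $\sum_{\alpha\in\mathcal{A}_\lambda^*}\omega(\overline{Q}_\alpha)\le c\|\omega\|_1$ cannot be justified by $m(F)=0$: the boundaries $F$ are $m$-null but may carry positive $\omega$-mass, and Lemma \ref{edue}(i) only bounds the overlap of closures \emph{at a fixed level}, whereas your antichain mixes levels. The correct justification is that at most $c_5$ members of any antichain can contain a given point $x$ in their closures: the vertices $\beta$ with $x\in\overline{Q}_\beta$ form a subtree $J$ with at most $c_5$ vertices per level in which every vertex has a child (by Lemma \ref{localizing}), so the disjoint descendant sets of the antichain elements inject into a single level of $J$. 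With that pointwise multiplicity bound the sum is at most $c_5\|\omega\|_1$ and the weak $(1,1)$ estimate follows as you wrote it.
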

\begin{proof}[Proof of Theorem \ref{thmb}]
Again, we separate the proof into some easier steps.

\bf Step 1. \rm Fix $k\in\NN$. Let $\|\omega\|_1=\int_Xd\omega$ and 
using Ahlfors-regularity,

\begin{eqnarray}
\label{stimauno}
\II_G\omega(x)&=&\sum_{\alpha\in P_G(x),\ d(\alpha)< k}\frac{\omega(\alpha)}{ m(\alpha)^s}+
\sum_{\alpha\in P_G(x),\ d(\alpha)\ge k}\frac{\omega(\alpha)}{ m(\alpha)^s}\crcr
&\le&c\|\omega\|_1\delta^{-Qsk}+cM_G^ m\omega(x)\delta^{Q(1-s)k}.
\end{eqnarray}

Choose $k$ in such a way $\delta^{Qk}\approx \|\omega\|_1/M_G^ m\omega(x)$, so that the two summands are approximatively equal,
$$
\|\omega\|_1\delta^{-Qsk}\approx M_G^ m\omega(x)\delta^{Q(1-s)k}.
$$
Then,
\begin{equation}\label{fermo}
 \II_G\omega(x)\le c\|\omega\|_1^{1-s}\left(M_G^ m\omega(x)\right)^s.
\end{equation}

Using Lemma \ref{maximal} and the estimates for $\II_G$, we obtain
\begin{eqnarray}
 m(\II_G\omega>\lambda)&\le& m(c\|\omega\|_1^{1-s}
\left(M_G^ m\omega(x)\right)^s>\lambda)\crcr
&\le&c\|\omega\|_1\lambda^{-\frac{1}{s}}\|\omega\|_1^{\frac{1-s}{s}}
=c\left(\frac{\|\omega\|_1}{\lambda}\right)^{1/s}.
\end{eqnarray}

\bf Step 2. \rm We will show that there exist $a>1$, $b>1$ such that, for all $\epsilon\in(0,1)$ and all $\lambda>0$,

\begin{equation}
\label{goodlambda}
m(\II_G\omega>a\lambda)\le b\epsilon^q m(\II_G\omega>\lambda)+m(\SSS_G\omega>\epsilon\lambda).
\end{equation}

By Corollary \ref{eleffe}, we can replace each of the three sets $A$ whose $ m$-measure is considered in (\ref{goodlambda}) by $A\setminus F$, considering only those points $x$ in $X$ 
such that $\sharp\Lambda^{-1}(x)=1$.

First, we extend the definitions of $\II_G\omega$ and $\SSS_G\omega$ to points $\alpha$ of $T\equiv G$. Let $P_G(\alpha)=\{\beta\in G:\ d_G(\beta,[o,\alpha])\le1\ \mbox{and}\ d(\beta)\le d(\alpha)\}$ and set
$$
\II_G\omega(\alpha):=\sum_{\beta\in P_G(\alpha)}\frac{\omega(\beta)}{ m(\beta)^s},
$$
and
$$
\SSS_G\omega(\alpha):=\sup_{\beta\in P_G(\alpha)}\frac{\omega(\beta)}{ m(\beta)^s}.
$$

If $\II_G\omega(x)>\lambda$ and $x\notin F$, then there is $\alpha\in P_G(x)$ such that $\II_G\omega(\alpha)>\lambda$ and it is clear that, if $\alpha>\beta$ in $T$, then $\II_G\omega(\alpha)\ge\II_G\omega(\beta)$. Let $A(\lambda)$ be the set of the $\alpha$ in $T$ which are maximal with the property $\II_G\omega(\alpha)>\lambda$.

Then, the set $\{\xi\in (X\setminus F)\cup G:\ \II_G\omega(\xi)>\lambda\}$ is the disjoint union of the sets  $S_1(\alpha)=S(\alpha)\cup(\overline{Q^{d(\alpha)}_\alpha}\setminus F)$, as $\alpha$ ranges in $A(\lambda)$.

Observe that $\partial_{\overline{G}} S(\alpha)=\overline{Q^{d(\alpha)}_\alpha}$ is the boundary of $S(\alpha)$ with respect to the metric $\orho$ in $\overline{G}$.

By maximality, if $\alpha$ is in $A(\lambda)$, then $\II_G\omega(\alpha^{-1})\le\lambda<\II_G\omega(\alpha)$.

Fix $\alpha$ in $A(\lambda)$.  We consider two cases. Suppose that for $ m$-almost all $x$ in $\overline{Q^{d(\alpha)}_\alpha}$ we have $\SSS_G\omega(x)>\epsilon\lambda$. Then,
$$
 m(\{\II_G\omega>a\lambda\}\cap \partial_{\overline{G}} S(\alpha))\le m(\partial_{\overline{G}} S(\alpha))
= m(\{\SSS_G\omega>\epsilon\lambda\}\cap \partial_{\overline{G}} S(\alpha)),
$$
and we have nothing to prove.

Suppose that, on the contrary, there is $x_0$ in $\overline{Q_\alpha}\setminus F$ such that $\SSS_G\omega(x_0)\le\epsilon\lambda$. Let 
$$
\tilde{S}(\alpha)=\bigcup_{\genfrac{}{}{0pt}{}{\beta:d(\beta)=d(\alpha)}{d_G(\alpha,\beta)\le1}}\overline{Q_\beta},
$$ 
and let $\omega_1=\omega|_{\tilde{S}(\alpha)}$. 

We write $\partial_\orho\tilde{S}(\alpha)=\cup_{\beta}\partial_\orho S(\beta)$, the union being over the $\beta$'s used in the definition of $\tilde{S}(\alpha)$.By Step 1,

\begin{equation}
\label{dozza}
m(\{\II_G\omega_1>\frac{a\lambda}{2}\}\cap\overline{Q_\alpha})\le\frac{c}{a^{1/s}}\left(\frac{\|\omega_1\|_1}{\lambda}\right)^{1/s}.
\end{equation}

We estimate
\begin{equation}
\label{toscanella}
\|\omega_1\|_1=\omega(\partial_\orho\tilde{S}(\alpha))\le c\SSS_G\omega(\alpha) m(\alpha)^s.
\end{equation}

In fact,
\begin{eqnarray*}
\omega(\partial_\orho\tilde{S}(\alpha))&\le&\sum_{\genfrac{}{}{0pt}{}{\beta:d(\beta)=d(\alpha)}{d_G(\alpha,\beta)\ge1}}\omega(\partial_\orho S(\beta))\crcr
&\le& c\max_{\genfrac{}{}{0pt}{}{\beta:d(\beta)=d(\alpha)}{d_G(\alpha,\beta)\ge1}}\omega(\partial_\orho S(\beta))\crcr
&\ &\mbox{because\ there\ are\ boundedly\ many\ such\ }\beta\mbox{'s}\crcr
&\le&c\left(\max_{\genfrac{}{}{0pt}{}{\beta:d(\beta)=d(\alpha)}{d_G(\alpha,\beta)\ge1}}\frac{\omega(\beta)}{ m(\beta)^s}\right) m(\alpha)^s\crcr
&\le&c\SSS_G\omega(\alpha) m(\alpha)^s.
\end{eqnarray*}

Inserting (\ref{toscanella}) in (\ref{dozza}), 

\begin{eqnarray}
 \label{imola}
 m\left(\{\II_G\omega_1>\frac{a\lambda}{2}\}\cap\alpha\right)&\le&
c\frac{ m(\alpha)}{a^{1/s}}\left(\frac{\SSS_G\omega(\alpha)}{\lambda}\right)^{1/s}\crcr
&\le&\frac{c}{a^{1/s}} m(\alpha)\epsilon^{1/s}\crcr
&\ &\mbox{since\ }\SSS_G\omega(\alpha)\le\SSS_G\omega(x_0)\le\epsilon\lambda,\crcr
&\le&c m(\alpha)\epsilon^{1/s}.
\end{eqnarray}

Consider now $\omega_2:=\omega-\omega_1$. Then,
$$
\II_G\omega_2(\alpha^{-1})\le\II_G\omega(\alpha^{-1})\le\lambda,
$$
by the maximality of $\alpha$. If $x\in\alpha\setminus F$, then,
$$
\II_G\omega_2(x)=\II_G\omega_2(\alpha^{-1})\le\lambda<\frac{a\lambda}{2}.
$$
In the first equality we use the information on $\supp(\omega_2)$. 

Thus,
\begin{eqnarray}
 \label{bubano}
\{\II_G\omega>a\lambda\}\cap(\alpha\setminus F)&=&
\left[\{\II_G\omega_1>\frac{a\lambda}{2}\}\cap(\alpha\setminus F)\right]\cup\left[\{\II_G\omega_2>\frac{a\lambda}{2}\}\cap(\alpha\setminus F)\right]\crcr
&=&\left[\{\II_G\omega_1>\frac{a\lambda}{2}\}\cap(\alpha\setminus F)\right].
\end{eqnarray}
The wished estimate follows from (\ref{imola}) and (\ref{bubano}), proving Step 2.

\smallskip

\bf Step 3. \rm A ``standard argument'' (see \cite{AH}) shows that the estimate follows from the good-$\lambda$ inequality in Step 2.
\end{proof}

For our purposes, the main point of interest of Theorem \ref{thmb} is the following.

\begin{corollary}
\label{mordano}
 $$
\int_X \left(\sum_{\alpha\in P_G(x)}\frac{\omega(\alpha)}{ m(\alpha)^s}\right)^qd m(x)
\le c\int_X\sum_{\alpha\in P_G(x)}\left(\frac{\omega(\alpha)}{ m(\alpha)^s}\right)^qd m(x).
$$
\end{corollary}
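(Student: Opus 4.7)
The plan is to derive the corollary as an immediate consequence of Theorem \ref{thmb} combined with a trivial pointwise inequality. I would not need any new machinery beyond what has already been set up.

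First I would rewrite the left-hand side of the stated inequality as $\int_X \II_G\omega(x)^q\,d m(x)$, using the very definition of $\II_G\omega$. Applying Theorem \ref{thmb} directly then gives
$$
\int_X \II_G\omega(x)^q\,d m(x)\le c_7\int_X \SSS_G\omega(x)^q\,d m(x).
$$
So everything reduces to controlling the maximal function $\SSS_G\omega$ pointwise by the tail-sum expression on the right-hand side.

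Next I would observe the elementary pointwise bound: for any $x\in X$,
$$
\SSS_G\omega(x)^q=\sup_{\alpha\in P_G(x)}\left(\frac{\omega(\alpha)}{ m(\alpha)^s}\right)^q\le\sum_{\alpha\in P_G(x)}\left(\frac{\omega(\alpha)}{ m(\alpha)^s}\right)^q,
$$
since the sup of a family of nonnegative numbers is dominated by their sum (and raising to the $q$-th power commutes with the sup of nonnegative quantities). Integrating this inequality over $X$ against $d m$ and combining with the bound from Theorem \ref{thmb} yields the claimed corollary, with constant $c=c_7$.

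There really is no obstacle here: the content of the result is entirely in Theorem \ref{thmb}. The only thing to notice is the structural point that Theorem \ref{thmb} produces $\SSS_G\omega^q$ on the right, whereas Corollary \ref{mordano} asks for the termwise $q$-th power sum, and this substitution is free because passing from a supremum of nonnegative terms to a sum only increases the quantity. Thus the proof is essentially a one-line observation after invoking the theorem.
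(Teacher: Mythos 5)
Your proof is correct and is exactly the route the paper intends: Corollary \ref{mordano} is stated without separate proof precisely because it follows from Theorem \ref{thmb} together with the trivial pointwise bound $\SSS_G\omega(x)^q=\sup_{\alpha\in P_G(x)}\bigl(\omega(\alpha)/m(\alpha)^s\bigr)^q\le\sum_{\alpha\in P_G(x)}\bigl(\omega(\alpha)/m(\alpha)^s\bigr)^q$. Your identification of the left-hand side with $\int_X\II_G\omega^q\,dm$ and the one-line domination of the supremum by the sum match the paper's (implicit) argument, so there is nothing to add.
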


\section{Proof of Theorem \ref{maine}}\label{sectcap}

\it

In this section, we first write the energy $\EE(\omega)$ of a measure $\omega$ on $X$ in a way which resembles one side of the Muckenhoupt-Wheeden inequality, the use Wolff's half of the inequality to approximatively write the energy in a different way. The catch is that when the same procedure is applied to a measure on $\partial T$, one gets a very similar expression: the approximating expression for the energy, in fact, does not depend on the \bf graph geometry\it, but only on the \bf tree geometry\it. Using the fact that Borel measures on $\partial T$ and on $X$ can be moved forth and back through $\Lambda$, and observing that energies of corresponding measures are equivalent, we easily obtain a proof of the equivalence of capacities in Theorem \ref{maine}.

\rm

\medskip

\begin{lemma}
 \label{lemmaII}
There are positive constants $c^\prime$, $c^{\prime\prime}$ independent of the Borel measure $\omega$ and 
of the point $x$ in $X$ such that
$$
c^\prime\sum_{\alpha\in P_G(x),\ y\in\alpha} m(\alpha)^{-s}\le K(x,y)\le c^{\prime\prime}\sum_{\alpha\in P_G(x),\ y\in\alpha} m(\alpha)^{-s}
$$
and
$$
c\II_G\omega(x)\le K\omega(x)\le c\II_G\omega(x).
$$
\end{lemma}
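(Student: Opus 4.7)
The plan is to prove the pointwise estimate on $K(x,y)$ first and then derive the estimate on $K\omega(x)$ by integrating against $\omega$ and using Fubini--Tonelli on a countable non-negative sum, which immediately yields
$$
K\omega(x)=\int_X K(x,y)\,d\omega(y)\approx\sum_{\alpha\in P_G(x)}\frac{1}{m(\alpha)^s}\int_X\chi_{\overline{Q}_\alpha}(y)\,d\omega(y)=\sum_{\alpha\in P_G(x)}\frac{\omega(\alpha)}{m(\alpha)^s}=\II_G\omega(x).
$$
So the real work is in the first inequality.

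For that, fix $x,y\in X$ with $x\ne y$, let $k\in\NN$ be defined by $\delta^{k+1}<\rho(x,y)\le\delta^k$, and recall that by Ahlfors-regularity $K(x,y)\approx m(B(x,\rho(x,y)))^{-s}\approx\delta^{-Qsk}$. Also, by Ahlfors-regularity and Christ's theorem, $m(\alpha)\approx\delta^{Qd(\alpha)}$, so each term in the target sum at level $j$ contributes $\delta^{-Qsj}$ up to multiplicative constants. The two things I need to prove are then: (a) the set $N_j(x,y):=\{\alpha\in P_G(x):\ y\in\overline{Q}_\alpha,\ d(\alpha)=j\}$ has cardinality bounded by an absolute constant; and (b) $N_j(x,y)$ is empty when $j$ is too large and non-empty for every level $0\le j\le k$. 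These together with a geometric-series estimate give
$$
\sum_{\alpha\in P_G(x),\ y\in\alpha}m(\alpha)^{-s}\approx\sum_{j=0}^{k}\delta^{-Qsj}\approx\delta^{-Qsk}\approx K(x,y),
$$
as required.

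Bounded cardinality in (a) is immediate from Lemma \ref{edue}(i): at each level there are at most $c_5$ boxes whose closures contain $y$. For the upper bound on admissible $j$, I observe that if $\alpha\in P_G(x)$ has level $j$ and $\beta\in P_G(x)$ lies on a tree-geodesic to $x$ at level $j$ with $d_G(\alpha,\beta)\le1$, then by the definition of $\orho$ and \eqref{rhorho} we get $\rho(x,y)\le\mathrm{diam}(Q_\beta)+\rho(Q_\beta,Q_\alpha)+\mathrm{diam}(Q_\alpha)\lesssim\delta^j$, hence $j\le k+C$. For non-emptiness at levels $j\le k$, I pick $\beta$ on the geodesic to $x$ at level $j$ (so $x\in\overline{Q}_\beta$) and any $\alpha$ at level $j$ with $y\in\overline{Q}_\alpha$ (which exists by Lemma \ref{euno}); since $\rho(Q_\alpha,Q_\beta)\le\rho(y,x)\le\delta^k\le\delta^j$, it follows that $\alpha\genfrac{}{}{0pt}{}{\sim}{G}\beta$ or $\alpha=\beta$, so $\alpha\in P_G(x)$.

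The step I expect to be the only delicate one is the lower-bound existence at each level, since $x$ and $y$ can sit on opposite sides of a dyadic boundary and so the tree-geodesic to $x$ need not itself pass through $y$; the above argument sidesteps this by using that $P_G(x)$ was defined to include graph neighbors of the geodesic, precisely to absorb this kind of near-the-boundary behavior. Once (a) and (b) are in hand, the upper and lower bounds in the first displayed inequality follow by summing the geometric series $\sum_{j=0}^{k}\delta^{-Qsj}\approx\delta^{-Qsk}$, and then the second displayed inequality is obtained by the Fubini--Tonelli step above.
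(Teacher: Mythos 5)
Your proposal is correct and follows essentially the same route as the paper: both reduce the statement to the pointwise kernel estimate via a Fubini--Tonelli interchange, and both establish that estimate by noting that the boxes $\alpha\in P_G(x)$ with $y\in\overline{Q}_\alpha$ occur in bounded number per level, only at levels $j\lesssim k$ where $\delta^k\approx\rho(x,y)$, and then summing the resulting geometric series $\sum_j\delta^{-Qsj}\approx\delta^{-Qsk}\approx K(x,y)$. Your write-up merely makes explicit the level-counting and the non-emptiness at each level $j\le k$ (which the paper leaves implicit in the step ``$\approx m(x,\orho(x,y))^{-s}$''), so there is nothing to change.
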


\proof
We begin with the estimate from below.
\begin{eqnarray}
\label{usa}
\II_G\omega(x)&=&\sum_{\alpha\in P_G(x)}\frac{\omega(\alpha)}{ m(\alpha)^s}\crcr
&=&\int_{X}\left(\sum_{\alpha\in P_G(x)}\frac{\chi_\alpha(y)}{ m(\alpha)^s}\right)d\omega(y)\crcr
&=&\int_{X}\left(\sum_{\alpha\in P_G(x)\cap P^0_G(y)} m(\alpha)^{-s}\right)d\omega(y),
\end{eqnarray}
where $P^0_G(y)$ is the union of the tree geodesics ending at $y$. Let $\alpha$ be one of the indexes over which the sum is taken. Then, $y\in\alpha$ and there is $\beta\genfrac{}{}{0pt}{}{\sim}{G}\alpha$ such that $x\in\beta$. Then, $\orho(x,y)\le c\delta^{d(\alpha)}$. Also, at each level $k$, there are boundedly many $\alpha$ indexing the sum. Thus,

\begin{eqnarray}\label{canada}
\left(\sum_{\alpha\in P_G(x)\cap P^0_G(y)} m(\alpha)^{-s}\right)&\approx&\sum_{\alpha:\ x\in\alpha,\ \orho(x,y)\le\delta^{d(\alpha)}} m(\alpha)^{-s}\crcr
&\approx& m(x,\orho(x,y))^{-s}\approx K(x,y).
\end{eqnarray}

This is the first estimate in the lemma; the second follows by inserting (\ref{canada}) in (\ref{usa}).
\endproof

Given the kernel $K$ on $X$, we define a new kernel $K_{\partial T}$ on $\partial T$, with $\tilde{ m}$ instead of $ m$ and with $\rho_T$ instead of $\rho\approx\orho$.

\begin{theorem}
\label{treeandgraph}
Given a measure $\omega$ in $X$, let ${\Lambda^*(\omega)}$ be the measure on $\partial T$ defined by (\ref{pulback}), 
and let $\EE_{\partial T}$ be the energy defined by the kernel $K_{\partial T}$. Then,
\begin{equation}
\label{paris}
\EE_X(\omega)\approx\EE_{\partial T}({\Lambda^*\omega}).
\end{equation}

On the other hand, if $\nu$ is a measure on $\partial T$, then

\begin{equation}
\label{toulouse}
\EE_{\partial T}(\nu)\approx\EE_X(\Lambda_*\nu).
\end{equation}
\end{theorem}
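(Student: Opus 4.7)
My plan is to reduce both $\EE_X(\omega)$ and $\EE_{\partial T}(\nu)$ to the same tree-indexed discrete expression of the form $\sum_{\alpha\in T} m(\alpha)^{1-s\pp}\mu(\alpha)^{\pp}$, with the mass $\mu(\alpha)$ standing for $\omega(\overline{Q}_\alpha)$ on the $X$-side and for $\nu(\partial S(\alpha))$ on the $\partial T$-side, and then match the two discrete sums using the transport properties of $\Lambda_*$ and $\Lambda^*$ established in Proposition \ref{lambdaomega} and Corollary \ref{lastminute}.

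For the discretization on $X$, Lemma \ref{lemmaII} gives $K\omega\approx\II_G\omega$ pointwise, so $\EE_X(\omega)\approx\int_X(\II_G\omega)^{\pp}\,dm$. Corollary \ref{mordano} (the combined Muckenhoupt--Wheeden and Wolff halves of Theorem \ref{thmb}, with $q=\pp$) trades the $\pp$-th power of the sum defining $\II_G\omega$ for the sum of $\pp$-th powers, after which Fubini recasts the result as
\[
\sum_{\alpha\in T}\left(\frac{\omega(\overline{Q}_\alpha)}{m(\alpha)^s}\right)^{\pp} m\bigl(\{x\in X : \alpha\in P_G(x)\}\bigr).
\]
The tent $\{x : \alpha\in P_G(x)\}$ has $m$-measure comparable to $m(\alpha)$ by bounded $G$-connectivity (Lemma \ref{edue}) and Ahlfors regularity, giving $\EE_X(\omega)\approx\sum_\alpha m(\alpha)^{1-s\pp}\omega(\overline{Q}_\alpha)^{\pp}$.

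The same argument applies verbatim to $(\partial T,\rho_T,\tilde m)$: formula \eqref{rhot} shows that disjoint basic sets $\partial S(\alpha),\partial S(\beta)$ at the same tree-level are separated in $\rho_T$ by more than $\delta^{d(\alpha)}$, so Christ's decomposition of $\partial T$ coincides with the tree $T$ itself and the associated graph has no horizontal edges. Using $\tilde m(\partial S(\alpha))=m(\alpha)$, the same chain of steps yields $\EE_{\partial T}(\nu)\approx\sum_\alpha m(\alpha)^{1-s\pp}\nu(\partial S(\alpha))^{\pp}$. Identity \eqref{paris} then follows by setting $\nu=\Lambda^*\omega$, since Corollary \ref{lastminute}(ii) with $F=\partial T$ gives $\Lambda^*\omega(\partial S(\alpha))\approx\omega(\Lambda(\partial S(\alpha)))=\omega(\overline{Q}_\alpha)$, so the two discrete sums agree term-by-term up to constants.

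For \eqref{toulouse} with $\omega=\Lambda_*\nu$, the easy direction $\omega(\overline{Q}_\alpha)\ge\nu(\partial S(\alpha))$ follows from $\partial S(\alpha)\subseteq\Lambda^{-1}(\overline{Q}_\alpha)$, giving $\EE_X(\Lambda_*\nu)\gtrsim\EE_{\partial T}(\nu)$. For the reverse, a preimage of $x\in\overline{Q}_\alpha$ lies in some $\partial S(\gamma)$ with $d(\gamma)=d(\alpha)$ and $\overline{Q}_\gamma\cap\overline{Q}_\alpha\ne\emptyset$, i.e.\ $\gamma\sim_G\alpha$; Lemma \ref{edue} caps the number of such $\gamma$ by a universal constant, so
\[
\omega(\overline{Q}_\alpha)=\nu(\Lambda^{-1}(\overline{Q}_\alpha))\le\sum_{\gamma\sim_G\alpha,\,d(\gamma)=d(\alpha)}\nu(\partial S(\gamma)),
\]
and raising to the $\pp$-power (bounded number of terms, power-mean inequality), multiplying by $m(\alpha)^{1-s\pp}$, and reindexing over $\gamma$ absorbs the combinatorial factors into a universal constant. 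The only delicate step is exactly this last one: because $\Lambda$ is not injective, the pushforward $\Lambda_*\nu$ can inflate the cube masses by collecting contributions from neighboring basic sets, and controlling this inflation is possible only thanks to the uniform multiplicity bound in Lemma \ref{onetoone} together with the bounded $G$-connectivity of Lemma \ref{edue}.
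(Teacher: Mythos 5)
Your proof is correct and follows essentially the same route as the paper: discretize $\EE_X(\omega)$ via Lemma \ref{lemmaII}, Corollary \ref{mordano} and Fubini to get $\sum_\alpha m(\alpha)^{1-s\pp}\omega(\overline{Q}_\alpha)^{\pp}$, observe that the same computation on the Ahlfors-regular space $(\partial T,\rho_T,\tilde m)$ yields the purely tree-theoretic sum, and then transfer masses through $\Lambda^*$ and $\Lambda_*$, handling the edge effect in \eqref{toulouse} by the inclusion $\Lambda^{-1}(\overline{Q}_\alpha)\subseteq\bigcup_{\gamma\genfrac{}{}{0pt}{}{\sim}{G}\alpha}\partial S(\gamma)$ and bounded connectivity. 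Your explicit verification via \eqref{rhot} that the graph associated with $\partial T$ has no horizontal edges is a slightly more detailed justification of the paper's remark that the discrete sum ``does not depend on the graph structure, but only on the structure of $T$,'' but it is the same argument.
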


\proof

We first prove (\ref{paris})

\begin{eqnarray}\label{lauretta}
 \EE_X(\omega)&:=&\int_XK\omega^\pp d m\crcr
&\approx&\int_X[\II_G\omega]^\pp d m \ \mbox{by\ Lemma\ \ref{lemmaII}}\crcr
&\approx&\int_X\sum_{\alpha\in P_G(x)}
\left(\frac{\omega(\alpha)}{ m(\alpha)^s}\right)^\pp d m(x)\mbox{\ by\ Theorem\ B}\crcr
&=&\sum_{\alpha\in T}\frac{\omega(\alpha)^\pp}{ m(\alpha)^{s\pp}}
\int_X\chi(x\in X:\ \alpha\in P_G(x))d m(x)\crcr
&\approx&\sum_{\alpha\in T}\frac{\omega(\alpha)^\pp}{ m(\alpha)^{s\pp-1}}\crcr
&\approx&\sum_{\alpha\in T}
\frac{{\Lambda^*(\omega)}(\alpha)^\pp}{\tilde{ m}(\alpha)^{s\pp-1}},
\end{eqnarray}
by Proposition \ref{lambdaomega}.

The key observation is that the last expression in the chain of equivalences does not depend on the graph 
structure, but only on the structure of $T$. In fact, particularizing the calculations of (\ref{lauretta})
to the case $X=\partial T$, $G=T$, and to the measure $\mu=\Lambda^*(\omega)$ instead of $\omega$,
we obtain
\begin{equation}\label{angelicaribelle}
 \EE_{\partial T}(\Lambda^*(\omega))\approx \sum_{\alpha\in T}\frac{{\Lambda^*(\omega)}(\alpha)^\pp}{\tilde{ m}(\alpha)^{s\pp-1}}.
\end{equation}
We have then the desired estimate $\EE_{\partial T}({\Lambda^*(\omega)})\approx \EE_X(\omega)$.

We now consider (\ref{toulouse}). Recall the identification $\alpha\equiv\overline{Q_\alpha}$ between points in $G$ and the closure of dyadic sets in $X$.  In the tree $T$, $\alpha$ is identified with a dyadic subregion $[\alpha]\subseteq\partial T$ in the same fashion, and  $[\alpha]\subset\Lambda^{-1}(\alpha)$. The inclusion is generally proper because of the ``edge effect'': elements in the boundary of $\overline{Q_\alpha}$ have preimages in dyadic subsets on $\partial T$ at the same level of $[\alpha]$, but different from $[\alpha]$. 

However, it is still true that $\Lambda^{-1}(\alpha)\subset\cup_{\beta \genfrac{}{}{0pt}{}{\sim}{G}\alpha}[\beta]$, and this, together with Ahlfors-regularity and the uniform bound on the number of such $\beta$'s, suffices to show that (using the identification of the measures $ m$ on $X$ with the measure $\Lambda^* m$ on $\partial T$, see in Remark \ref{noteonmu}):

\begin{eqnarray*}
\EE_{\partial T}(\nu)&\approx&\sum_\alpha\frac{\nu([\alpha])^\pp}{ m(\alpha)^{s\pp-1}}\crcr
&\approx&\sum_\beta\frac{\Lambda_*\nu(\beta)^\pp}{ m(\beta)^{s\pp-1}}\crcr
&\approx&\EE_X(\Lambda_*\nu).
\end{eqnarray*}

\endproof

We are now ready for the proof of Theorem \ref{maine}.







\proof[Proof of Theorem \ref{maine}]
 Observe that the first assertion (\ref{mickey}) is implied 
by the second, since (\ref{mouse}) implies that
$$
\mbox{cap}_{\partial T}(\Lambda^{-1})(E)\approx\mbox{cap}_X(\Lambda(\Lambda^{-1})(E))=\mbox{cap}_X(E).
$$
Let $F\subseteq\partial T$ be closed. We first show that
$$
\mbox{cap}_X(\Lambda(F))\lesssim\mbox{cap}_{\partial T}(F).
$$ 
Let $\omega$ be a positive Borel measure on $X$, supported on $\Lambda(F)$. Then, $\mbox{supp}(\Lambda^*_F\omega)\subseteq F$. Also,

\begin{eqnarray*}
\|\Lambda^*_F\omega\|_1&=&\Lambda^*\omega(F)\crcr
&=&\int_{X}\frac{\sharp(\Lambda^{-1}(x)\cap F)}{\sharp(\Lambda^{-1}(x))}d\omega(x)\crcr
&\approx&\omega(\Lambda(F))=\|\omega\|_1.
\end{eqnarray*}

About energies, since we restrict the sets to $F$, the estimate in (\ref{paris}) is one-side only:

\begin{eqnarray*}
\EE_X(\omega)&\approx&\sum_{\alpha\in T}
\frac{\omega(\overline{Q_\alpha})^\pp}{ m(Q_\alpha)^{\pp s-1}}\crcr
&\approx&\sum_{\alpha\in T}
\frac{\Lambda^*\omega(\partial S(\alpha))^\pp}{ m(Q_\alpha)^{\pp s-1}}\crcr
&\gtrsim&\sum_{\alpha\in T}
\frac{\Lambda^*_F\omega(\partial S(\alpha))^\pp}{ m(Q_\alpha)^{\pp s-1}}\crcr
&\approx&\EE_{\partial T}(\Lambda^*_F\omega).
\end{eqnarray*}

These three information together give:

\begin{eqnarray*}
\mbox{cap}_X(\Lambda(F))&=&
\sup\left\{\frac{\|\omega\|_1^p}{\EE_{X}(\omega)^{p-1}}:
\ \mbox{supp}(\omega)\subseteq\Lambda(F)\right\}\crcr
&\lesssim&
\sup\left\{\frac{\|\Lambda^*_F(\omega)\|_1^p}{\EE_{\partial T}(\Lambda^*_F\omega)^{p-1}}:
\ \mbox{supp}(\Lambda^*_F(\omega))\subseteq F\right\}\crcr
&\le&\mbox{cap}_{\partial T}(F).
\end{eqnarray*}

In the other direction, we show that, if $F$ is closed in $\partial T$, then
$$
\mbox{cap}_X(\Lambda(F))\gtrsim\mbox{cap}_{\partial T}(F).
$$ 
Let $\nu$ be a positive Borel measure supported on $F$. Then, 
$\mbox{supp}(\Lambda_*\nu)=\Lambda(\mbox{supp}(\nu))\subseteq\Lambda(F)$, by Lemma \ref{lambdaomega} (iv). 
Hence,
\begin{eqnarray*}
\mbox{cap}_{\partial T}(F)&=&
\sup\left\{\frac{\|\nu\|_1^p}{\EE_{\partial T}(\nu)^{p-1}}:
\ \mbox{supp}(\nu)\subseteq F\right\}\crcr
&\lesssim&\sup\left\{\frac{\|\Lambda_*\nu\|_1^p}{\EE_{X}(\Lambda_*\nu)^{p-1}}:
\ \mbox{supp}(\Lambda_*\nu)\subseteq \Lambda(F)\right\}\crcr
&\le&\cpc_X(\Lambda(F)).
\end{eqnarray*}
\endproof

\section{Potential Theory on Trees}\label{pttII}

\it

In this section, we develop the basic potential theory on trees.  Most material is covered, with different degrees of generality, in various sources (\cite{So} and \cite{LyPe}, for instance). 

We give a self-contained exposition, only requiring the basic facts of nonlinear potential theory, 
for which we refer to \cite{AH} (Sect. 2.3-2.5). 
The reader will find these facts translated into tree language in the 
Appendix. We list the main features of the section. 
\begin{itemize}
\item[(i)] In \ref{pttone}, we identify the capacity $\Cpc_{\partial T}$ on $\partial T$ considered before with (the restriction to $\partial T$ of) a new tree capacity $\Cpc$ defined on $\oT$. This is analogous to the identification, for subsets of the unit circle, of logarithmic capacity in the complex plane with the linear $1/2$-Bessel capacity on circle. In the Ahlfors-regular case, as a consequence, we have the identification of $\Cpc_X$ with the new tree capacity $\Cpc$.
\item[(ii)] In \ref{pttoneandahalf}, we estimate the capacity $\Cpc$ of points and sets having the form $\partial S(\alpha)$ ($\alpha\in T$). As a corollary, we have estimates for the capacity of metric balls in Ahlfors-regular spaces. It is here explained why the cases $s\in[1/\pp,1)$ are the only interesting ones in the Ahlfors-regular case and why $s=1/\pp$ is especially interesting.
\item[(iii)] We prove in Section \ref{pttthree} a recursive formula to compute capacities of closed subsets of  $\overline{T}$. Indeed, the formula gives two-sided estimates for capacities in Ahlfors-regular metric spaces.
\item[(iv)] We give a simple proof, in Section \ref{ptttwo}, of the \bf Trace Theorem \it (or Carleson Measure Theorem) on trees. The measures satisfying the Trace Inequality (or Carleson imbedding, in the language of most complex analysts) will be characterized by a testing condition, in the spirit of \cite{KS}.

 By the general machinery developed earlier, the Trace Theorem on trees implies an analogous theorem in Ahlfors-regular spaces.

\item[(v)] In \ref{pttfour}, we show that the capacity of a set in the tree can be defined by means of the ``Carleson measures'' supported on this set.
\item[(vi)] In \ref{pttfive}, we give a direct proof -using (v) and a monotonicity property- that the testing condition of the Trace Theorem is equivalent to a certain capacitary condition. An \bf indirect proof \it of this fact is that the capacitary condition, as well, characterizes the Carleson measures.  
\end{itemize}

\rm

\subsection{Tree Capacity Seen from ``Inside'' the Tree.}
\label{pttone}

Let $T$ be a tree, having (combinatorial) boundary $\partial T$ with respect to a fixed root $o$ in $T$, and let $\oT=T\cup\partial T$ be the compactification of $T$. The tree $T$ is endowed with a positive weight $\pi$, $\pi(x)>0\ \forall x\in T$. We consider exponents $1<p<\infty$ and $p^{-1}+\pp^{-1}=1$.  

Let $\xi$ in $\oT$. The predecessor set of $\xi$ is $P(\xi)=\{y:\ o\le y\le \xi\}\cap T$. 
The successor set in $\oT$ of $x\in T$ is $S(x)=\{\xi\in\oT:\ \xi\ge x\}$.  
The natural distance in $T$ is denoted by $d$. Given $x$ and $x$ in $T$, we define
\begin{equation}\label{salonicco}
 d_\pi(x,y)=\sum_{z\in[x,y]}\pi(z)^{1-\pp},
\end{equation}
where $[x,y]$ is the unique geodesic for the distance $d$, here considered as a set of vertices, 
joining $x$ and $y$. The function $d_\pi$ might be considered as the length-distance associated 
with the weight $\pi^{1-\pp}$, but for the fact that $d_\pi(x,x)=\pi(x)^{1-\pp}\ne0$. If we needed
the full force of a distance, we might define the weight $\pi$ on the set of the edges, rather than  
on the set of the vertices; this is what is generally done when studying electrical networks \cite{So}.
Weights defined on vertices, however, perfectly serve our present purposes.

The \it Hardy operator \rm $I$ is defined as
\begin{equation}
\label{hardy}
If(\xi)=\sum_{y\in P(\xi)}f(y),\ \xi\in\oT. 
\end{equation}

Its formal adjoint is
$$
I^*\omega(x)=\int_{S(x)}d\omega(y), x\in T.
$$
where $\omega$ is a Borel measure on $\oT$.

We consider the kernel $g:\oT\times T\to\RR$ defined by
$$
g(\zeta,\alpha)=\chi_{\PPP(\zeta)}(\alpha)=\chi_{\SSS(\alpha)}(\zeta).
$$
Here, we think of $(T,\pi^{1-\pp})$ as a measure space. The set $\oT$ is given the length-metric structure 
$\pi_T:\oT\times\oT\to[0,\infty)$
which assigns to each
edge $(\alpha,\alpha^{-1})$ the length $2^{-d(\alpha,o)}$. Its restriction to the boundary $\partial T$
of the tree $T$ is $\pi_T(\zeta,\xi)=C\cdot2^{-d(\zeta\wedge\xi)}$ 
(the metric itself will play here no r\^ole: we are only interested in the topology).

It is well explained in \cite{AH}
how to develop the potential theory associated with the kernel $g$. 
 See also the Appendix to the present article.
The \it potential \rm of a 
function $f=\pi^{\pp-1}\varphi:T\to\RR$ is $Gf=I(f\pi^{1-\pp})=I\varphi:\oT\to\RR$; while the \it (dual) potential \rm of a positive, Borel measure $\omega$ on $\oT$ is given by $\check{G} \omega=I^* \omega=\int_{S(y)}d \omega$. 

The \it energy \rm of the measure $\omega$ is
$$
\EE(\omega)=\sum_{\alpha\in T}(I^*\omega (\alpha))^{\pp}\pi(\alpha)^{1-\pp}.
$$

We can finally define the \it capacity \rm $\cpc(E)$ of a closed subset $E$ of $\oT$ as
\begin{eqnarray}\label{capacissimo}
\cpc(E)&=&\inf\left\{\|\varphi\|^p_{\ell^p(\pi)}:\ \varphi\ge0,\ I\varphi\ge1\ \mbox{on}
\ E\right\}\crcr
&=&\sup\left\{\frac{\omega(E)^p}{\EE(\omega)^{p-1}}:\ \supp(\omega)\subseteq E\right\}.
\end{eqnarray}

The second equality is a deep result in potential theory (see the appendices), whose proof relies on a min/max principle.

In the important case when $T$ is the tree of Christ's dyadic boxes of an Ahlfors $Q$-regular metric space $(X,\rho, m)$ (which without essential loss of generality, in view of Theorem \ref{maine} and Proposition 
\ref{bazzano}, might be taken to be $\partial T$ itself), and $0<s<1$ is fixed, we consider the weight
$$
\pi_s(\alpha)= m(\overline{Q}_\alpha)^{\frac{s\pp-1}{\pp-1}}= m(\alpha)^{\frac{s\pp-1}{\pp-1}}.
$$ 

The case $s\pp=1$ correspond to unweighted potential theory on trees, which, as we shall see, leads to ``logarithmic'' potentials.

As before, we have identified the measure $ m$ on $X$ with the corresponding measure $\Lambda^* m$ on $\partial T$.

\begin{theorem}
\label{saragozza}
Let $T$ be the tree associated with the metric measure space $(X,\rho, m)$, $0<s<1$ fixed and $\pi=\pi_s$ be the weight just defined. Let $\cpc$ the capacity on $\oT$ associated with $\pi_s$.  Then, there are constants $C_1<C_2$ such that, for all compact subsets $K$ of $\partial T$,
$$
C_1\cpc(K)\le\cpc_X(\Lambda(K))\le C_2\cpc(K).
$$
and
$$
\cpc_X(F)\approx\cpc(\Lambda^{-1}(F))
$$
whenever $F$ is closed in $\partial T$.
\end{theorem}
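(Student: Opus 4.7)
The plan is to identify the tree capacity $\Cpc$ on $\oT$ (with weight $\pi_s$) with the Ahlfors-regular capacity $\Cpc_{\partial T}$ on $\partial T$, and then transfer to $X$ via Theorem \ref{maine}. I will start with the purely definitional observation that, for $\omega$ supported in $\partial T$, the tree energy reduces to a clean dyadic sum. Computing $\pi_s(\alpha)^{1-\pp}=m(\alpha)^{1-s\pp}$ and noting $I^{*}\omega(\alpha)=\omega(\partial S(\alpha))$ gives
$$
\EE(\omega)=\sum_{\alpha\in T}\bigl(I^{*}\omega(\alpha)\bigr)^{\pp}\pi_s(\alpha)^{1-\pp}
=\sum_{\alpha\in T}\frac{\omega(\partial S(\alpha))^{\pp}}{m(\alpha)^{s\pp-1}},
$$
which is precisely the dyadic expression that appeared in the proof of Theorem \ref{treeandgraph}.

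The main step will be to show that this dyadic sum is comparable to $\EE_{\partial T}(\omega)$. Since $(\partial T,\rho_T,\tilde m)$ is Ahlfors $Q$-regular by Proposition \ref{bazzano} and the metric balls in $\partial T$ are exactly the cl-open sets $\partial S(\alpha)$ (by the explicit formula \eqref{rhot}), Christ's dyadic decomposition of $\partial T$ can be taken to coincide with the family $\{\partial S(\alpha)\}_{\alpha\in T}$, and the graph it produces is, up to bounded equivalence in connectivity, the same $G$. The chain of estimates leading to \eqref{paris} in the proof of Theorem \ref{treeandgraph}---namely, Lemma \ref{lemmaII} followed by the Muckenhoupt-Wheeden-Wolff bound of Corollary \ref{mordano}---then applies verbatim with $X$ replaced by $\partial T$ and yields
$$
\EE_{\partial T}(\omega)\approx\sum_{\alpha\in T}\frac{\omega(\partial S(\alpha))^{\pp}}{m(\alpha)^{s\pp-1}}=\EE(\omega).
$$

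To conclude, I will note that the admissible measures in the sup-formula \eqref{capacissimo} for $\Cpc(K)$ and in \eqref{capacity} for $\Cpc_{\partial T}(K)$ coincide as soon as $\supp(\omega)\subseteq K\subseteq\partial T$, so the energy equivalence above passes through the sup to give $\Cpc_{\partial T}(K)\approx\Cpc(K)$. Both assertions of the theorem then follow by composing with Theorem \ref{maine}: for compact $K\subseteq\partial T$, \eqref{mouse} gives $\Cpc_X(\Lambda(K))\approx\Cpc_{\partial T}(K)\approx\Cpc(K)$; for $F$ closed in $X$, continuity of $\Lambda$ makes $\Lambda^{-1}(F)$ closed in $\partial T$, and \eqref{mickey} combined with the same equivalence yields $\Cpc_X(F)\approx\Cpc(\Lambda^{-1}(F))$. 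The only delicate point is the identification of Christ's decomposition of $\partial T$ with the original tree $T$ so that the Muckenhoupt-Wheeden-Wolff machinery can be reused intact; once this is pinned down, the remainder is formal.
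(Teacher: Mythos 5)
Your proposal is correct and follows essentially the same route as the paper: the authors dispose of Theorem \ref{saragozza} by remarking that it is implicit in the proof of \eqref{paris}, since the tree energy with weight $\pi_s$ is exactly the dyadic sum $\sum_{\alpha\in T}\omega(\partial S(\alpha))^{\pp}m(\alpha)^{1-s\pp}$ produced there by the Muckenhoupt--Wheeden--Wolff chain (which, as they note, depends only on the tree structure and hence equals $\EE_{\partial T}$ up to constants), and comparable energies give comparable capacities, after which one composes with Theorem \ref{maine}. Your only addition is to make explicit the observation that $\{\partial S(\alpha)\}$ serves as the Christ decomposition of $(\partial T,\rho_T,\tilde m)$, which is a harmless elaboration of the paper's remark rather than a different argument.
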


Theorem \ref{saragozza} is actually implicit in the proof of (\ref{paris}): comparable energies give 
comparable capacities. We have decide to state it separately in this section because the relation 
between $\cpc$ and $\cpc_{\partial T}$, especially in the important case $s\pp=1$, is similar to the 
relation between logarithmic capacity (in the complex plane) and Bessel $1/2$-capacity (on the real line) 
for subsets of the real line.

To better contextualize the capacities in Theorem \ref{saragozza}, consider that, when $\pi=\pi_s$,

\begin{eqnarray*}
\EE(\omega)&:=&\sum_{\alpha\in T}(I^*\omega (\alpha))^{\pp}\pi_s(\alpha)^{1-\pp}\crcr
&=&\sum_{\alpha\in T}\frac{I^*\omega (\alpha))^{\pp}}{ m(\alpha)^{s\pp-1}}\crcr
&=&\int_{\partial T}I\left[(I^*\omega)^{\pp-1}\pi_s^{1-\pp}\right](\zeta)d\omega(\zeta)\crcr
&=&\int_{\partial T}\sum_{\alpha\in T}\frac{I^*\omega^\pp(\alpha)}{ m(\alpha)^{s\pp}}d m(\zeta).
\end{eqnarray*}

Both equalities follow from straightforward applications of Fubini's Theorem.

The integrand on the third line,
\begin{equation}
\label{wolffpotential}
W(\omega)=I\left[(I^*\omega)^{\pp-1}\pi^{1-\pp}\right],
\end{equation}
is the \it Wolff potential \rm of the measure $\omega$; which was introduced by Wolff in \cite{HW}. 
The integral in the fourth line is the intermediate term in Theorem \ref{thmb}, with $q=\pp$. 
A deep analysis of the Wolff potential and of its applications can be found in \cite{COV} and \cite{KV}.

For the remaining part of this section, we will consider general weights $\pi$ on a generic tree $T$: 
proofs are not more difficult and notation is actually more transparent. We will point out, in some cases 
of interest, how the results specialize to the case when $T$ is the tree of the dyadic boxes introduced by 
Christ.

\subsection{Capacity of Special Sets.}
\label{pttoneandahalf}

We begin with a general fact: points having positive capacity are points having finite ``potential theoretic distance'' from the root.

\begin{proposition}
\label{heavypoint}
Let $\zeta_0$ be a point of $\partial T$. Then,
$$
\cpc(\{\zeta_0\})>0\ \iff\ d_\pi(o,\zeta_0)<\infty.
$$
More precisely, $\cpc(\{\zeta_0\})=d_\pi(0,\zeta_0)^{1-p}$ whenever $\zeta_0\in\oT$.
\end{proposition}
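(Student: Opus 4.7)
The plan is to work from the infimum formulation of capacity. Since the constraint $I\varphi\ge 1$ on the singleton $\{\zeta_0\}$ collapses to the single linear inequality $\sum_{y\in P(\zeta_0)}\varphi(y)\ge 1$, the problem reduces to a one-variable H\"older extremal along the geodesic $P(\zeta_0)$; recall $d_\pi(o,\zeta_0)=\sum_{y\in P(\zeta_0)}\pi(y)^{1-\pp}$.

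Assuming $d_\pi(o,\zeta_0)<\infty$, I would establish the lower bound $\cpc(\{\zeta_0\})\ge d_\pi(o,\zeta_0)^{1-p}$ via H\"older's inequality with conjugate exponents $p,\pp$: writing $\varphi(y)=\varphi(y)\pi(y)^{1/p}\cdot\pi(y)^{-1/p}$ along $P(\zeta_0)$,
$$
1\le\sum_{y\in P(\zeta_0)}\varphi(y)\le\Bigl(\sum_{y\in P(\zeta_0)}\varphi(y)^p\pi(y)\Bigr)^{1/p}\Bigl(\sum_{y\in P(\zeta_0)}\pi(y)^{-\pp/p}\Bigr)^{1/\pp}.
$$
The identity $-\pp/p=1-\pp$ identifies the second factor as $d_\pi(o,\zeta_0)^{1/\pp}$, and raising to the $p$-th power gives $\|\varphi\|_{\ell^p(\pi)}^p\ge d_\pi(o,\zeta_0)^{1-p}$ for every admissible $\varphi$. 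For the matching upper bound I would exhibit the H\"older extremizer
$$
\varphi_0(y):=d_\pi(o,\zeta_0)^{-1}\,\pi(y)^{1-\pp}\chi_{P(\zeta_0)}(y),
$$
which is admissible since $\sum_{y\in P(\zeta_0)}\varphi_0(y)=1$, and, using $p(1-\pp)+1=1-\pp$, satisfies $\|\varphi_0\|_{\ell^p(\pi)}^p=d_\pi(o,\zeta_0)^{-p}\sum_{y\in P(\zeta_0)}\pi(y)^{1-\pp}=d_\pi(o,\zeta_0)^{1-p}$.

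When $d_\pi(o,\zeta_0)=\infty$ (which forces $\zeta_0\in\partial T$), I would truncate: for $P_n\subset P(\zeta_0)$ consisting of the first $n$ ancestors of $\zeta_0$, the analogue $\varphi_n$ of $\varphi_0$ built from $P_n$ is still admissible at $\zeta_0$ (its $I$-potential there equals $1$ since $P_n\subset P(\zeta_0)$) and has $\|\varphi_n\|_{\ell^p(\pi)}^p=\bigl(\sum_{y\in P_n}\pi(y)^{1-\pp}\bigr)^{1-p}\to 0$, so $\cpc(\{\zeta_0\})=0$, matching the convention $\infty^{1-p}=0$. The dichotomy $\cpc(\{\zeta_0\})>0\Leftrightarrow d_\pi(o,\zeta_0)<\infty$ follows.

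I do not foresee a real obstacle: the only care needed is the bookkeeping with the exponents $p,\pp,1-\pp$ and the verification of admissibility and equality cases, since the single-point constraint trivializes the capacity problem to a one-dimensional H\"older extremal.
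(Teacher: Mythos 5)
Your proof is correct, but it runs through the opposite side of the duality from the paper's. The paper works with the sup-over-measures formulation in \eqref{capacissimo}: since every measure supported on $\{\zeta_0\}$ is a multiple of the Dirac mass, and the ratio $\omega(E)^p/\EE(\omega)^{p-1}$ is invariant under scaling $\omega\mapsto c\omega$ (because $\EE(c\omega)=c^{\pp}\EE(\omega)$ and $\pp(p-1)=p$), it suffices to compute $\EE(\delta_{\zeta_0})=\sum_{\alpha\in P(\zeta_0)}(I^*\delta_{\zeta_0})^{\pp}(\alpha)\,\pi^{1-\pp}(\alpha)=\sum_{\alpha\in P(\zeta_0)}\pi^{1-\pp}(\alpha)=d_\pi(o,\zeta_0)$, which gives $\cpc(\{\zeta_0\})=d_\pi(o,\zeta_0)^{1-p}$ in one line. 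You instead work from the inf-over-functions formulation and solve the one-dimensional H\"older extremal problem along $P(\zeta_0)$; your exponent bookkeeping ($-\pp/p=1-\pp$ and $p(1-\pp)+1=1-\pp$) is right, your extremizer is admissible with the claimed norm, and your truncation argument correctly handles the case $d_\pi(o,\zeta_0)=\infty$. What your route buys is that it uses only the primal definition of capacity (the infimum in \eqref{capacitybis}) and elementary H\"older, with no appeal to the min/max duality that equates the two expressions in \eqref{capacissimo}; what the paper's route buys is brevity, since the energy of a point mass is computed immediately from $I^*\delta_{\zeta_0}=\chi_{P(\zeta_0)}$. The two computations together in fact verify the duality on singletons.
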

\begin{proof}
It suffices to compute the energy of a unit mass $\delta_{\zeta_0}$ concentrated at $\zeta_0$:
\begin{eqnarray*}
\EE(\delta_{\zeta_0})&=&\sum_{\alpha}(I^*\delta_{\zeta_0})^\pp(\alpha)\pi^{1-\pp}(\alpha)\crcr
&=&\sum_{\alpha}\pi^{1-\pp}=d_\pi(\alpha).
\end{eqnarray*}
\end{proof}

It is obvious from the definition of capacity in $\oT$ that $\cpc(\{\alpha\})\ge\cpc({\SSS(\alpha)})$ (if $I\varphi$ is one on $\alpha$, it can be taken to be one on $\partial{\SSS(\alpha)}$ without increasing the norm of $\varphi$). Interesting  cases are those in which the opposite inequality holds as well; for instance, the Ahlfors-regular case.

\begin{lemma}
\label{lastoftheday}
If $0<s<1$ and $\pi=\pi_s$, then $\cpc(\{\alpha\})\approx\cpc(\partial{\SSS(\alpha)})$.
\end{lemma}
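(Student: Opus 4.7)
The inequality $\cpc(\partial S(\alpha)) \leq \cpc(\{\alpha\})$ is already noted just before the statement: any nonnegative $\varphi$ with $I\varphi(\alpha)\geq 1$ satisfies $I\varphi(\zeta)\geq I\varphi(\alpha)\geq 1$ for every $\zeta\in\partial S(\alpha)$, since $P(\alpha)\subseteq P(\zeta)$. Moreover, by Proposition \ref{heavypoint}, $\cpc(\{\alpha\})=d_\pi(o,\alpha)^{1-p}$. So the real task is the lower bound $\cpc(\partial S(\alpha))\gtrsim d_\pi(o,\alpha)^{1-p}$, for which I would use the dual definition of capacity in \eqref{capacissimo} and exhibit an essentially optimal test measure supported on $\partial S(\alpha)$.

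The natural candidate is $\omega := \tilde m|_{\partial S(\alpha)}$, for which $\omega(\partial S(\alpha))=m(\alpha)$ and a direct computation gives
\[
I^*\omega(\beta)=\begin{cases} m(\alpha), & \beta\le\alpha,\\ m(\beta), & \beta\ge\alpha,\\ 0, & \text{otherwise.}\end{cases}
\]
Splitting the energy along these two regions and using $\pi_s(\beta)^{1-\pp}=m(\beta)^{1-s\pp}$,
\[
\EE(\omega)=m(\alpha)^{\pp} d_\pi(o,\alpha) + \sum_{\beta\ge\alpha} m(\beta)^{\pp(1-s)+1}.
\]

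The main technical point is estimating the tail sum $\Sigma_\alpha:=\sum_{\beta\ge\alpha}m(\beta)^{\pp(1-s)+1}$. Grouping by level and using Ahlfors regularity (which gives $m(\beta)\approx\delta^{Qd(\beta)}$ and $\approx\delta^{-Q(k-d(\alpha))}$ descendants of $\alpha$ at level $k$), one obtains a geometric series with ratio $\delta^{Q\pp(1-s)}<1$ — this is where the hypothesis $s<1$ is crucial. Summing gives $\Sigma_\alpha\approx m(\alpha)^{\pp}\cdot m(\alpha)^{1-s\pp}$. A short case check (and here $s=1/\pp$ must be handled separately, noting that $d_\pi(o,\alpha)\approx d(\alpha)\geq 1$, while $m(\alpha)^{1-s\pp}=1$; in the other two ranges $s\pp<1$ and $s\pp>1$ one computes $d_\pi(o,\alpha)$ as a geometric sum and directly compares) shows $m(\alpha)^{1-s\pp}\lesssim d_\pi(o,\alpha)$, hence $\EE(\omega)\approx m(\alpha)^{\pp}d_\pi(o,\alpha)$.

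Finally, using the identity $\pp(p-1)=p$,
\[
\cpc(\partial S(\alpha))\ge\frac{\omega(\partial S(\alpha))^p}{\EE(\omega)^{p-1}}\approx\frac{m(\alpha)^p}{m(\alpha)^{\pp(p-1)}d_\pi(o,\alpha)^{p-1}}=d_\pi(o,\alpha)^{1-p}=\cpc(\{\alpha\}),
\]
closing the chain. The main obstacle is really just the bookkeeping for the tail sum $\Sigma_\alpha$ and verifying it is absorbed by $m(\alpha)^{\pp}d_\pi(o,\alpha)$ uniformly across the three regimes $s\pp<1$, $s\pp=1$, $s\pp>1$; the algebraic identity $\pp(p-1)=p$ then produces the desired exponent and the constants only depend on the Ahlfors data and $\delta$.
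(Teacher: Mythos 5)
Your proposal is correct and follows essentially the same route as the paper: test the dual definition of capacity with $\omega=\tilde m|_{\partial S(\alpha)}$, split the energy into the predecessor and successor contributions, control the tail $\sum_{\beta\ge\alpha}m(\beta)^{\pp(1-s)+1}$ by a geometric series (using $s<1$), and conclude via $\pp(p-1)=p$. Your observation that $m(\alpha)^{1-s\pp}\le d_\pi(o,\alpha)$ holds trivially (the right side contains that very term) even lets you avoid the explicit three-case comparison, a minor streamlining of the paper's argument.
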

\begin{proof}
We have to prove that $\cpc(\{\alpha\})\lesssim\cpc(\partial{\SSS(\alpha)})$. A simple calculation shows that
$$
\cpc(\{\alpha\})=d_{\pi_s}(\alpha)\approx
\begin{cases}
d(\alpha)^{1-p}\ \mbox{if}\ s=1/\pp,\crcr
m(\alpha)^{p(s-1/\pp)}\ \mbox{if}\ 1/\pp<s<1.
\end{cases}
$$

We test the definition of capacity (\ref{capacissimo}) on the measure $\omega:=m|_{\partial S(\alpha)}$, so that $\omega(\partial S(\alpha))=m(\alpha)$.

We estimate the energy of $\omega$:
\begin{eqnarray*}
\EE(\omega)&=&\sum_{\beta\in[o,\alpha)}\frac{m(\alpha)^\pp}{m(\beta)^{s\pp-1}}
+\sum_{\gamma\in S(\alpha)}m(\gamma)^{\pp+1-s\pp}\crcr
&=&I+II.
\end{eqnarray*}

For the first term we have (recognizing a geometric series, by Ahlfors-regularity, when $1/\pp<s<1$):
$$
I\approx
\begin{cases}
d(\alpha)m(\alpha)^{\pp}\ \mbox{if}\ s=1/\pp,\crcr
m(\alpha)^{1+\pp-s\pp}\ \mbox{if}\ 1/\pp<s<1.
\end{cases}
$$

For the second term, using Ahlfors-regularity and the fact that $\pp(1-s)>0$,
\begin{eqnarray*}
II&\approx&
\sum_{n=0}^\infty\left(\sum_{\gamma\in S(\alpha),\ d(\gamma,\alpha)=n}m(\gamma)\right)
\delta^{Q(d(\alpha)+n)\pp(1-s)}\crcr
&=&m(\alpha)\sum_{n=0}^\infty\delta^{Q(d(\alpha)+n)\pp(1-s)}\crcr
&\approx&m(\alpha)^{1+\pp-s\pp}.
\end{eqnarray*}

Summing $I$ and $II$ and using the definition of capacity,

\begin{eqnarray*}
\cpc(\partial S(\alpha))&\ge&\frac{\omega(\alpha)^p}{\EE(\omega)^{p-1}}\crcr
&\approx&\begin{cases}
          \frac{m(\alpha)^p}{[m(\alpha)^\pp d(\alpha)]^{p-1}}\ \mbox{if}\ s=1/\pp,\crcr
\frac{m(\alpha)^p}{[m(\alpha)^{(1-s\pp+\pp)}]^{p-1}}\ \mbox{if}\ 1/\pp<s<1,
         \end{cases}\crcr
&=&\begin{cases}
d(\alpha)^{1-p}\ \mbox{if}\ s=1/\pp,\crcr
m(\alpha)^{p(s-1/\pp)}\ \mbox{if}\ 1/\pp<s<1.
\end{cases}\crcr
&=&\cpc(\{\alpha\}),
\end{eqnarray*}
as wished.
\end{proof}











By Theorem \ref{maine}, these conclusions apply to the case of the trees coming from Ahlfors-regular spaces and $0<s<1$.  A simple geometric series argument, Proposition \ref{heavypoint} and Lemma \ref{lastoftheday} imply:

\begin{corollary}
\label{heavypointsahlfors}
Let $(X,m,\rho)$ be a (bounded) Ahlfors $Q$-regular space and $0<s<1$. Then,
\begin{enumerate}
 \item[(i)] For all points $x$ in $X$, $\cpc_X(\{x\})=0$ if and only if $\frac{1}{p^\prime}\le s<1$; 
 \item[(ii)] More generally, 
$$
\cpc_X(B(x,r))\approx
\begin{cases}
\log\frac{1}{r}\ \mbox{if}\ s=1/p^\prime,\crcr
r^{Q(sp^\prime-1)}\ \mbox{if}\ 1/p^\prime\le s<1.
\end{cases}
$$
\end{enumerate}
\end{corollary}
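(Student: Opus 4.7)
The plan is to push both statements down to the tree $T$ via Theorem \ref{saragozza}, where the capacities of singletons in $\oT$ have been determined in Proposition \ref{heavypoint} and those of boundary cones $\partial S(\alpha)$ have been determined in Lemma \ref{lastoftheday}.

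For (i), I would apply Theorem \ref{saragozza} to the closed singleton $\{x\}\subseteq X$ to obtain $\cpc_X(\{x\})\approx\cpc(\Lambda^{-1}(\{x\}))$. By Lemma \ref{onetoone}, $\Lambda^{-1}(\{x\})=\{\zeta_1,\dots,\zeta_n\}$ with $n\le c_5$, so monotonicity and subadditivity of $\cpc$ give $\cpc(\Lambda^{-1}(\{x\}))\approx\max_i\cpc(\{\zeta_i\})$. Proposition \ref{heavypoint} then yields $\cpc(\{\zeta_i\})=d_{\pi_s}(o,\zeta_i)^{1-p}$, and since $\pi_s(\beta)^{1-\pp}=m(\beta)^{1-s\pp}$ and $m(\beta)\approx\delta^{Qd(\beta)}$ by Ahlfors regularity,
\[
d_{\pi_s}(o,\zeta_i)\;\approx\;\sum_{j=0}^{\infty}\delta^{Qj(1-s\pp)},
\]
a geometric series that is finite iff $1-s\pp>0$. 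Hence $\cpc_X(\{x\})>0$ iff $s<1/\pp$, which is exactly (i).

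For (ii), I would fix $k$ with $\delta^{k+1}<r\le\delta^k$, so $r\approx\delta^k$. Christ's Theorem part (v) supplies a cube $\alpha$ at level $k+h$ (for a universal $h$ depending only on $c_4$) with $\overline{Q}_\alpha\subseteq B(x,r)$, while Lemma \ref{edue} yields cubes $\alpha_1,\dots,\alpha_N\in I_k$, with $N\le c_5$, such that $B(x,r)\subseteq\bigcup_i\overline{Q}_{\alpha_i}$. Theorem \ref{saragozza} and Lemma \ref{lastoftheday} convert $\cpc_X(\overline{Q}_\alpha)$ and each $\cpc_X(\overline{Q}_{\alpha_i})$ into $\cpc(\{\alpha\})$ and $\cpc(\{\alpha_i\})$ respectively, all of the same order since $d(\alpha)\approx d(\alpha_i)\approx k$ and $m(\alpha)\approx m(\alpha_i)\approx\delta^{Qk}$. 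Monotonicity and subadditivity give
\[
\cpc_X(B(x,r))\;\approx\;\cpc(\{\alpha\}),
\]
and plugging $m(\alpha)\approx r^Q$ and $d(\alpha)\approx\log(1/r)$ into the explicit formula produced in the proof of Lemma \ref{lastoftheday} yields the stated asymptotics.

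The main obstacle lies in part (ii): trapping the open, non-dyadic set $B(x,r)$ between an interior dyadic cube at level $k+O(1)$ and a uniformly bounded union of exterior dyadic cubes at level $k$, and verifying that these sandwich sets have capacities of the same order — the latter being guaranteed precisely because the formula of Lemma \ref{lastoftheday} depends on $m(\alpha)\approx r^Q$ only up to a constant that is insensitive to a shift of $O(1)$ in level. In part (i) the only subtlety is the possible non-injectivity of $\Lambda$ over $x$, cleanly handled by the uniform bound $n\le c_5$ from Lemma \ref{onetoone}.
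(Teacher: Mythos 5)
Your proposal is correct and follows essentially the route the paper intends: the authors' entire proof of this corollary consists of invoking Proposition \ref{heavypoint}, Lemma \ref{lastoftheday} and ``a simple geometric series argument'', and you have supplied exactly the missing details (the transfer to the tree via Theorem \ref{saragozza}, the uniformly bounded fiber $\sharp\Lambda^{-1}(x)\le c_5$ together with subadditivity for part (i), and the dyadic sandwich $\overline{Q}_\alpha\subseteq B(x,r)\subseteq\bigcup_i\overline{Q}_{\alpha_i}$ with boundedly many cubes for part (ii) --- though the inclusion $\overline{Q}_\alpha\subseteq B(x,r)$ comes from the diameter bound (iv) of Christ's Theorem rather than from (v)). One caveat: your computation does not literally ``yield the stated asymptotics''. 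Plugging $d(\alpha)\approx\log\frac{1}{r}$ and $m(\alpha)\approx r^Q$ into the formula established in the proof of Lemma \ref{lastoftheday} gives
$$
\cpc_X(B(x,r))\approx\left(\log\frac{1}{r}\right)^{1-p}\ \mbox{if}\ s=1/p^{\prime},
\qquad
\cpc_X(B(x,r))\approx r^{Q(sp^{\prime}-1)(p-1)}\ \mbox{if}\ 1/p^{\prime}<s<1,
$$
whereas the display in the Corollary omits the exponent $1-p$ (resp.\ $p-1$); as written, the logarithmic case would even contradict monotonicity of capacity, since $\log\frac{1}{r}$ increases as $r\to0$. This is evidently a typo in the statement (for $p=2$ the power case agrees, and the log case should read $(\log\frac{1}{r})^{-1}$), but you should record the corrected exponents rather than assert agreement with the printed formulas.
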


As earlier anticipated, the value $s=1/\pp$ relates the capacity $\cpc_X$ with a logarithmic capacity in the tree. Ultimately, this fact is hidden in the Wolff inequality.

\subsection{An Algorithm to Compute Tree Capacities}
\label{pttthree} 
For the material in this subsection, relevant references are also \cite{So} and \cite{BP}.  First, we show that the capacity of a set can be computed as the ``derivative at infinity'' of a Green function. In this case, the r\^ole of the point at infinity is played by the root.

\begin{theorem}
\label{infinito}
Let $E$ be closed in $\oT$ and let $\varphi$ be the corresponding extremal function. Then,
$$
\Cpc(E)=\varphi^{p-1}(o)\pi(o).
$$
\end{theorem}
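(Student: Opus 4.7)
The plan is to represent the extremal function $\varphi$ through the equilibrium measure of $E$ and read off the capacity by evaluating at the root. The argument rests on the standard facts of nonlinear potential theory imported from Appendices~\hyperref[nlpt]{I} and \hyperref[pttI]{II}, together with the H\"older-conjugacy identity $(p-1)(\pp-1)=1$.

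First I would invoke the existence of the equilibrium measure $\mu=\mu_E$: a positive Borel measure supported on $E$ whose capacitary potential $V\mu:=I\bigl[(I^*\mu)^{\pp-1}\pi^{1-\pp}\bigr]$ equals $1$ quasi-everywhere on $E$, hence $\mu$-almost everywhere. Setting
$$
\varphi(\alpha):=(I^*\mu(\alpha))^{\pp-1}\pi(\alpha)^{1-\pp},\qquad \alpha\in T,
$$
one has $I\varphi=V\mu\ge 1$ q.e.\ on $E$, so $\varphi$ is admissible in the inf-formulation of \eqref{capacissimo}. Using the Hölder identities $p(\pp-1)=\pp$ and $p(1-\pp)+1=1-\pp$, a direct computation gives
$$
\|\varphi\|_{\ell^p(\pi)}^p=\sum_{\alpha\in T}(I^*\mu(\alpha))^{\pp}\pi(\alpha)^{1-\pp}=\EE(\mu),
$$
and a Fubini swap converts this into $\EE(\mu)=\int_{\oT}I\varphi\,d\mu=\mu(E)$, since $I\varphi=1$ $\mu$-a.e. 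Matching with the sup formulation $\Cpc(E)=\mu(E)^p/\EE(\mu)^{p-1}$, all four quantities coincide:
$$
\Cpc(E)=\mu(E)=\EE(\mu)=\|\varphi\|_{\ell^p(\pi)}^p,
$$
confirming that $\varphi$ is indeed the extremal function of the statement.

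Finally I evaluate $\varphi$ at the root. Since $S(o)=\oT$ contains $\supp(\mu)$, we have
$$
I^*\mu(o)=\int_{S(o)}d\mu=\mu(E)=\Cpc(E),
$$
so $\varphi(o)=\Cpc(E)^{\pp-1}\pi(o)^{1-\pp}$. Raising to the $(p-1)$-th power and multiplying by $\pi(o)$, the exponents collapse via $(\pp-1)(p-1)=1$ and $(1-\pp)(p-1)+1=0$, yielding
$$
\varphi(o)^{p-1}\pi(o)=\Cpc(E)^{(\pp-1)(p-1)}\pi(o)^{(1-\pp)(p-1)+1}=\Cpc(E),
$$
which is the desired formula. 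The only non-routine step is the identification of $\varphi$ via the equilibrium measure together with the equilibrium identity $V\mu=1$ q.e.\ on $E$; once these are imported from the appendices, the theorem reduces entirely to the Hölder duality $(p-1)(\pp-1)=1$.
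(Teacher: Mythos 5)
Your proposal is correct and follows essentially the same route as the paper: both identify the extremal function as $\varphi=(I^*\sigma)^{\pp-1}\pi^{1-\pp}$ for the capacitary measure $\sigma$, use $\Cpc(E)=\sigma(E)=I^*\sigma(o)$, and collapse the exponents via $(p-1)(\pp-1)=1$. You merely spell out the intermediate identities $\Cpc(E)=\mu(E)=\EE(\mu)=\|\varphi\|_{\ell^p(\pi)}^p$ that the paper imports wholesale from its Appendix II.
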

The Green function in question is the equilibrium potential $\Phi=I\varphi$.

\begin{proof}
Let $E$ be a closed subset of $\oT$ and $\sigma$, $\varphi=(I^* \sigma)^{\pp-1}\pi^{1-\pp}$ be, respectively, its capacitary measure and capacitary function. Then,
$$
\Cpc(E)= \sigma(E)=I^* \sigma(o)=\pi(o)\varphi(o)^{p-1}.
$$
\end{proof}

Let $d_\pi$ be the distance associated with $\pi$: $d_\pi(x)=\sum_{y\in P(x)}\pi^{1-\pp}(y)$. The capacity of a subset of $T$ can be computed by means of a recursive algorithm. 
\begin{theorem}
\label{esplicito}
Let $Z$ be a set in $\oT$, $x_0$ a point in $T$ and let $x_1,\dots,x_n$ be its children. For each $j$, let $Z_j=Z\cap S(x_j)$. If $\Cpc_{x_j}$ denotes capacity with respect to the root $x_j$, then
$$
\Cpc(Z)=\frac{\sum_j\Cpc_{x_j}(Z_j)}{\left\{1+d_\pi(x)\left(\sum_j\Cpc_{x_j}(Z_j)\right)^{\pp-1}\right\}^{p-1}}.
$$
\end{theorem}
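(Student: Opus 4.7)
My plan is to use the infimum characterization of capacity proved earlier,
$$
\Cpc(Z)=\inf\Bigl\{\sum_{\alpha\in T}\varphi(\alpha)^{p}\pi(\alpha):\ \varphi\ge0,\ I\varphi\ge1\ \text{on}\ Z\Bigr\},
$$
and to carry out the minimization by separating the contribution of $\varphi$ along the path $\PPP(x_{0})$ from its contributions on the subtrees $\SSS(x_{j})$. Since $Z=\bigsqcup_{j}Z_{j}\subseteq\SSS(x_{0})\setminus\{x_{0}\}$, any admissible $\varphi$ may be replaced by its restriction to $\PPP(x_{0})\cup\bigsqcup_{j}\SSS(x_{j})$ (set equal to $0$ elsewhere) without increasing the energy or violating the constraints. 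Parametrize such a $\varphi$ by the number $c:=I\varphi(x_{0})=\sum_{y\in\PPP(x_{0})}\varphi(y)$. For any $\zeta\in Z_{j}$, since $\PPP(\zeta)=\PPP(x_{0})\sqcup\PPP_{x_{j}}(\zeta)$, we have $I\varphi(\zeta)=c+I_{x_{j}}\varphi(\zeta)$, so admissibility becomes: the restriction of $\varphi$ to $\SSS(x_{j})$ satisfies $I_{x_{j}}\varphi\ge 1-c$ on $Z_{j}$ (trivial if $c\ge 1$).

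The total energy splits as the sum of a path energy $\sum_{y\in\PPP(x_{0})}\varphi(y)^{p}\pi(y)$ and the subtree energies $\sum_{j}\sum_{y\in\SSS(x_{j})}\varphi(y)^{p}\pi(y)$, which can be minimized independently. On each subtree, a scaling $\varphi=(1-c)\tilde\varphi$ (for $c<1$) turns the constraint into $I_{x_{j}}\tilde\varphi\ge 1$ on $Z_{j}$, whose minimal energy is by definition $\Cpc_{x_{j}}(Z_{j})$; hence the minimum of the $j$-th piece is $(1-c)_{+}^{p}\Cpc_{x_{j}}(Z_{j})$. For the path, Lagrange multipliers applied to $\min\sum_{y\in\PPP(x_{0})}\varphi(y)^{p}\pi(y)$ under $\sum_{y\in\PPP(x_{0})}\varphi(y)=c$ give the extremal $\varphi(y)=(c/d_{\pi}(x_{0}))\pi(y)^{1-\pp}$, and a direct computation (using $(1-\pp)p+1=1-\pp$) yields minimum value $c^{p}/d_{\pi}(x_{0})^{p-1}$.

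It remains to optimize the one-variable function
$$
f(c)=\frac{c^{p}}{d_{\pi}(x_{0})^{p-1}}+(1-c)_{+}^{p}\,S,\qquad S:=\sum_{j}\Cpc_{x_{j}}(Z_{j}),
$$
over $c\ge 0$. Setting $f'(c)=0$ on $[0,1]$ gives $c/d_{\pi}(x_{0})=(1-c)S^{\pp-1}$, hence
$$
c^{\ast}=\frac{d_{\pi}(x_{0})S^{\pp-1}}{1+d_{\pi}(x_{0})S^{\pp-1}}\in(0,1),
$$
and substituting back, using $p(\pp-1)=\pp$ and $(\pp-1)(p-1)=1$, collapses $f(c^{\ast})$ to the asserted value $S/(1+d_{\pi}(x_{0})S^{\pp-1})^{p-1}$; the boundary value $f(1)=1/d_{\pi}(x_{0})^{p-1}$ dominates it since $(1+d_{\pi}(x_{0})S^{\pp-1})^{p-1}\ge d_{\pi}(x_{0})^{p-1}S$.

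The main obstacle is a conceptual rather than computational one: justifying that the infimum over all admissible $\varphi$ really decouples cleanly into independent optimizations on $\PPP(x_{0})$ and on each $\SSS(x_{j})$, indexed only by the single scalar $c$. This rests on the facts that the constraint $I\varphi\ge 1$ on $Z_{j}$ depends on $\varphi|_{\SSS(x_{j})}$ only through $I_{x_{j}}\varphi$ and on $\varphi|_{\PPP(x_{0})}$ only through the total sum $c$, together with nonnegativity of $\varphi$. Once this decoupling is in place, everything else is the explicit scalar minimization above.
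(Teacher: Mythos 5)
Your proof is correct, and it reaches the formula by a genuinely different route from the paper. The paper starts from the extremal function $\varphi$ for $\Cpc(Z)$ (whose existence and dual representation $\varphi=(I^*\sigma)^{\pp-1}\pi^{1-\pp}$ come from the appendix), shows by a splicing argument that its rescaled restriction to each $\SSS(x_j)$ is extremal for $\Cpc_{x_j}(Z_j)$, uses the $p$-harmonicity relation of Lemma \ref{pharmonic} to prove that $\varphi(y)^{p-1}\pi(y)$ is constant along $[o,x_0]$ and hence that the path energy equals $d_\pi(x_0)\Cpc(Z)^{\pp}$, and finally solves the resulting implicit identity for $\Cpc(Z)$. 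You instead work directly with the infimum definition: the decoupling you worry about at the end is sound, precisely because the energy is a sum over the disjoint vertex sets $\PPP(x_0),\SSS(x_1),\dots,\SSS(x_n)$ while the constraint at $\zeta\in Z_j$ involves $\varphi|_{\PPP(x_0)}$ only through $c=I\varphi(x_0)$ and $\varphi|_{\SSS(x_j)}$ only through $I_{x_j}\varphi(\zeta)$; this gives $\Cpc(Z)=\inf_{c\ge0}f(c)$ with both inequalities obtained by taking near-extremizers on each piece. Your path minimization via H\"older is exactly the content of the paper's computation \eqref{claimdue} (the extremizer $\varphi(y)=(c/d_\pi(x_0))\pi(y)^{1-\pp}$ is the profile the paper derives from Lemma \ref{pharmonic}), and your scaling of the subtree problem is the content of \eqref{claimuno}; but your argument never needs existence or uniqueness of extremal functions or the capacitary measure, and it produces the closed form directly rather than an equation to be inverted. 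What you give up is the by-product the paper gets for free, namely the explicit identification of the global extremal function in terms of the subtree extremals, which is used elsewhere in the paper. All your exponent arithmetic ($p(\pp-1)=\pp$, $(\pp-1)(p-1)=1$, $(1-\pp)p+1=1-\pp$) checks out, as does the boundary comparison $f(1)\ge f(c^\ast)$.
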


The theorem and its proof also hold if the set of the children is infinite.  We need the obvious lemma:

\begin{lemma}
\label{pharmonic}
Let $\varphi=(I^* \sigma)^{\pp-1}\pi^{1-\pp}$, where $ \sigma\in\MM(\oT)$ is a measure. For $x$ in $T$, let $C(x)$ be the set of the children of $x$ in $T$. Then,
$$
\pi(x)\varphi(x)^{p-1}= \sigma(x)+\sum_{x_j\in C(x)}\pi(x_j)\varphi(x_j)^{p-1}.
$$
\end{lemma}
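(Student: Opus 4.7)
The plan is to reduce the statement to the obvious additivity of the set function $A \mapsto \sigma(A)$ on the disjoint decomposition
\[
S(x) = \{x\} \sqcup \bigsqcup_{x_j \in C(x)} S(x_j),
\]
which is immediate from the definition of the successor set in $\oT$. The only genuine content beyond that is the algebraic identity that $\pi(x)\varphi(x)^{p-1}$ equals $I^{*}\sigma(x)$, so let me carry that out first.

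First I would compute, using the definition $\varphi = (I^{*}\sigma)^{\pp-1}\pi^{1-\pp}$,
\[
\pi(x)\varphi(x)^{p-1} \;=\; \pi(x)\bigl(I^{*}\sigma(x)\bigr)^{(\pp-1)(p-1)} \pi(x)^{(1-\pp)(p-1)}.
\]
Since $1/p + 1/\pp = 1$ gives $p + \pp = p\pp$, the exponent $(p-1)(\pp-1) = p\pp - p - \pp + 1 = 1$, so the right‑hand side collapses to $I^{*}\sigma(x)$. That is, $\pi(x)\varphi(x)^{p-1} = I^{*}\sigma(x) = \sigma(S(x))$ for every $x \in T$, and by the same token $\pi(x_j)\varphi(x_j)^{p-1} = \sigma(S(x_j))$ for each child $x_j$.

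Second, I would invoke the disjoint decomposition $S(x) = \{x\} \sqcup \bigsqcup_{x_j \in C(x)} S(x_j)$ (valid in $\oT$, since any $\xi \ge x$ in $\oT$ either equals $x$ or passes through a unique child of $x$), and countable additivity of the Borel measure $\sigma$, to write
\[
\sigma(S(x)) \;=\; \sigma(\{x\}) + \sum_{x_j \in C(x)} \sigma(S(x_j)).
\]
Substituting the two identities from the previous step and adopting the convention $\sigma(x) := \sigma(\{x\})$ used in the lemma yields the claimed formula. No obstacle is anticipated: the only thing to watch is the exponent bookkeeping $(p-1)(\pp-1)=1$, and, if one wants to be formal, the measurability of each $S(x_j)$ (which is clear since each is clopen in $\oT$ in the Gromov topology).
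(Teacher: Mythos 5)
Your proof is correct: the exponent computation $(p-1)(\pp-1)=1$ gives $\pi(x)\varphi(x)^{p-1}=I^*\sigma(x)=\sigma(S(x))$, and the disjoint decomposition $S(x)=\{x\}\sqcup\bigsqcup_{x_j\in C(x)}S(x_j)$ together with additivity of $\sigma$ finishes the argument. The paper omits the proof entirely, labelling the lemma ``obvious,'' and your argument is exactly the routine verification the authors had in mind.
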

We will apply the Lemma for $x$ outside the support of $\sigma$.

\begin{proof}[Proof of Theorem \ref{esplicito}]
Let $\varphi$ be the extremal function for $\Cpc(Z)$. We claim that 
\begin{equation}
\label{claimuno}
\varphi_j=\frac{\varphi}{1-I\varphi(x)},
\end{equation}
restricted to $S(x_j)$, is extremal for $\Cpc_{x_j}(Z_j)$.

For $\zeta\in Z$, $\sum_{y=x_j}^\zeta\varphi_j(y)=\frac{I\varphi(\zeta)-I\varphi(x)}{1-I\varphi(x)}\ge1$ q.e. $\zeta$, hence $\varphi_j$ is a candidate to be extremal for $\Cpc_{x_j}(Z_j)$. Suppose there exists another $\psi$ such that $\sum_{y=x_j}^\zeta\psi(y)\ge1$ q.e. $\zeta\in Z_j$ and $\|\psi\|_{L^p(S(x_j),\pi)}<\|\varphi_j\|_{L^p(S(x_j),\pi)}$. Reasoning as in the proof of Theorem \ref{cmcap} below, we can find a function $\varphi^\prime$ such that $\|\varphi^\prime\|_{L^p(\pi)}<\|\varphi\|_{L^p(\pi)}$ and $I\varphi^\prime\ge1$ q.e. on $Z$, contradicting the extremality of $\varphi$. The claim is proved.

Next, we claim that
\begin{equation}
\label{claimdue}
\sum_{y=o}^x\varphi(y)^p\pi(y)=d_\pi(x)\Cpc(Z)^\pp.
\end{equation}

Let $y_0=o,\dots,y_N=x$ be an enumeration of the points in the geodesic $[o,x]$ between $o$ and $x$.  Since $E$ is contained in $S(x)$, $\varphi$ has is supported on $S(x)\cup[o,x]$. 

By Lemma \ref{pharmonic}, 
$$
 \varphi(y_j)^{p-1}\pi(y_j)=\varphi(y_{j-1})^{p-1}\pi(y_{j-1})
$$
and, by iteration,
$$
\varphi(y_j)=\frac{\pi(o)^{\pp-1}}{\pi(y_j)^{\pp-1}}.
$$
Summing,
\begin{eqnarray*}
 \sum_{y=o}^x\pi(y)\varphi(y)^p&=&\sum_{y=o}^x\pi(y)^{1+p(1-\pp)}\pi(o)^{p(\pp-1)}\varphi(o)^p\crcr
&=&d_\pi(x)\pi(o)^\pp\varphi(o)^p=d_\pi(x)\Cpc(Z)^\pp,
\end{eqnarray*}
by Lemma \ref{infinito}. This proves \eqref{claimdue}.  Hence,
\begin{eqnarray*}
 \Cpc(Z)&=&\sum_j\sum_{S(x_j)}\varphi^p\pi+\sum_o^x\varphi^p\pi\crcr
&=&\sum_j\sum_{S(x_j)}\varphi_j^p\pi[1-I\varphi(x)]^p+d_\pi(x)\Cpc(Z)^\pp\crcr
&=&\sum_j\Cpc_{x_j}(Z_j)[1-\Cpc(Z)^{\pp-1}d_\pi(x)]^p+d_\pi(x)\Cpc(Z)^\pp (\mbox{by\ equations\ \eqref{claimuno}\ and\ \eqref{claimdue}}),
\end{eqnarray*}
since $I\varphi(x)=\sum_{y=o}^x\varphi(y)=\pi(o)^{\pp-1}\varphi(o)d_\pi(x)$. Thus,
$$
\Cpc(Z)=\sum_j\Cpc_{x_j}(Z_j)[1-\Cpc(Z)^{\pp-1}d_\pi(x)]^p+d_\pi(x)\Cpc(Z)^\pp.
$$

Rewrite the identity as
$$
\left(\sum_j\Cpc_{x_j}(Z_j)\right)^{\pp-1}=
\Cpc(Z)^{\pp-1}\left[1-\Cpc(Z)^{\pp-1}d_\pi(x)\right]^{(1-p)(\pp-1)}
$$
and solve with respect to $\cpc(Z)$.  The recursive formula in the statement of the theorem is proved.\end{proof}

\subsection{Trace Inequalities on Trees and Ahlfors-regular spaces.}
\label{ptttwo}

\subsubsection{Trace Inequalities on Trees.}
To each positive measure $\mu$ we associate
\begin{equation}
 \label{carnorm}
[\mu]:=\sup_{a\in T}\left\{\frac{I^*[(I^*\mu)^\pp\pi^{1-\pp}](a)}{I^*\mu(a)}\right\}^{p-1}.
\end{equation}

By definition, it is homogeneous of degree $d=1$ with respect to $\mu$ and $d=-1$ with respect to $\pi$. Those measures $\mu$ having $[\mu]<\infty$ are called \it Carleson measures for $(I,\pi,p)$\rm.

The following theorem was proved, on trees, in \cite{ARS1} and, in a slightly more general form, 
in \cite{ARS2}. Since the proof in \cite{ARS2} is short, we will give it here,  for convenience of 
the reader. 
See also \cite{ARS4} for the linear case with special weights: 
the proof of the special case is the same as that of the general one.

Trace inequalities and, more generally, weighted trace inequalities have a long history. We mention the approach via testing condition of \cite{KS} and, long before, that by means of capacitary conditions by Maz'ya \cite{Mazya2}.  
See also \cite{KV} for an approach relating trace inequalities and nonlinear equations.

\begin{theorem}
 \label{carlesonmeasures}
There is a constant $C(p)$ which only depends on $p$ such that
\begin{equation}
\label{carleson}
\int_\oT If^pd \mu\le C(p)[ \mu]\sum_{x\in T}f^p(x)\pi(x).
\end{equation}

In fact, we have the quantitative estimates
$$
[ \mu]\le|||I|||_{(L^p(\pi),L^p( \mu))}^p\le C(p)[ \mu].
$$
\end{theorem}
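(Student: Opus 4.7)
The plan is to prove both directions by standard arguments on trees: a testing-function argument for the lower bound $[\mu]\le|||I|||^p$, and a double-telescoping argument closed by the Carleson hypothesis for the upper bound $|||I|||^p\le C(p)[\mu]$.

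For the lower bound, I would fix $a\in T$ and test on the function $f_a:=(I^*\mu)^{\pp-1}\pi^{1-\pp}\chi_{S(a)}$. The exponents collapse nicely ($(\pp-1)p=\pp$ and $(1-\pp)p+1=1-\pp$), giving
$$\|f_a\|_{L^p(\pi)}^p=\sum_{y\ge a}(I^*\mu(y))^\pp\pi(y)^{1-\pp}=I^*[(I^*\mu)^\pp\pi^{1-\pp}](a).$$
A Fubini computation also yields $\int_{\oT} If_a\,d\mu=\sum_{y\ge a}f_a(y)I^*\mu(y)=\|f_a\|_{L^p(\pi)}^p$. Since $\supp(If_a)\subseteq\overline{S(a)}$, Hölder on the left gives $\int If_a\,d\mu\le\|If_a\|_{L^p(\mu)}\cdot I^*\mu(a)^{1/\pp}$. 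Combining this with the assumed norm bound $\|If_a\|_{L^p(\mu)}\le|||I|||\,\|f_a\|_{L^p(\pi)}$ and dividing through yields $\|f_a\|_{L^p(\pi)}^{p-1}\le|||I|||\cdot I^*\mu(a)^{1/\pp}$; raising to the power $\pp$ and rearranging gives $I^*[(I^*\mu)^\pp\pi^{1-\pp}](a)/I^*\mu(a)\le|||I|||^\pp$, and then taking the $(p-1)$-th power and the supremum over $a$ produces $[\mu]\le|||I|||^p$.

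For the upper bound, set $F:=If$. The convexity inequality $b^p-a^p\le pb^{p-1}(b-a)$ applied along the geodesic $o\le y_1\le\cdots\le y_N=\zeta$ to the partial sums $F(y_j)$ yields the fundamental telescoping bound
$$F(\zeta)^p\le p\sum_{y\le\zeta}F(y)^{p-1}f(y),\qquad\zeta\in\oT.$$
Integrating this against $d\mu$ and using Fubini, $\int F^p\,d\mu\le pA$ where $A:=\sum_{y\in T}F(y)^{p-1}f(y)I^*\mu(y)$. Hölder with exponents $p,\pp$, factoring $f\pi^{1/p}$ into the first factor, gives
$$A\le\|f\|_{L^p(\pi)}\Bigl(\sum_{y}F(y)^p(I^*\mu(y))^\pp\pi(y)^{1-\pp}\Bigr)^{1/\pp}=:\|f\|_{L^p(\pi)}\,B^{1/\pp}.$$
The crux is controlling $B$. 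Viewing $d\nu:=(I^*\mu)^\pp\pi^{1-\pp}$ as a measure on $T$ and re-applying the telescoping bound to $F$ against $\nu$, we get $B\le p\sum_y F(y)^{p-1}f(y)I^*\nu(y)$; but $I^*\nu=I^*[(I^*\mu)^\pp\pi^{1-\pp}]\le[\mu]^{\pp-1}I^*\mu$ is exactly the Carleson testing condition, so $B\le p[\mu]^{\pp-1}A$. Substituting back yields $A\le\|f\|_{L^p(\pi)}(p[\mu]^{\pp-1}A)^{1/\pp}$; solving for $A$ gives $A\le p^{p-1}[\mu]\|f\|_{L^p(\pi)}^p$, and hence $\int F^p\,d\mu\le p^p[\mu]\|f\|_{L^p(\pi)}^p$, i.e. $C(p)\le p^p$.

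The main technical obstacle is the closing manoeuvre: to solve $A\le c A^{1/\pp}$ we need $A<\infty$. I would handle this by first treating $f$ of finite support (where $A$ is visibly a finite sum), proving the inequality there, and then extending to arbitrary $f\ge 0$ by monotone convergence on both sides of the final estimate. The remaining points are bookkeeping: the telescoping bound extends to $\zeta\in\partial T$ by monotonicity of $F$ along the geodesic, and although $\mu$ is carried by $\oT$ while $\nu$ is carried by $T$, both Fubini applications involve only nonnegative summands and so are justified without qualification.
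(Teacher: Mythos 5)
Your proof is correct, but it follows a genuinely different route from the paper's. For the upper bound $|||I|||^p\le C(p)[\mu]$ the paper dualizes: it shows the equivalent boundedness of $I^*_\mu:L^{p'}(\mu)\to\ell^{p'}(\pi^{1-p'})$ by dominating the dual quantity with the maximal operator $\mathcal M_\mu g=\sup_{\beta\in P(\zeta)}I^*(g\,d\mu)/I^*\mu$ and then invoking a general two-weight maximal inequality (Theorem \ref{siblings}), proved by a weak $(1,1)$ bound plus Marcinkiewicz interpolation; the testing constant $[\mu]$ enters only at the very end as $\sup\mathcal M_\mu(\sigma)$ for the particular measure $\sigma=(I^*\mu)^{p'}\pi^{1-p'}$. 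You instead work directly on the primal inequality with the classical telescoping (``integration by parts'') bound $F(\zeta)^p\le p\sum_{y\le\zeta}F(y)^{p-1}f(y)$, apply H\"older, and close the resulting self-referential estimate $A\le\|f\|_{L^p(\pi)}(p[\mu]^{p'-1}A)^{1/p'}$ using the testing condition in the form $I^*\nu\le[\mu]^{p'-1}I^*\mu$. Your argument is more elementary and self-contained (no duality, no interpolation) and yields the explicit constant $C(p)\le p^p$, at the cost of the finiteness bootstrap you correctly flag (first finite support, then monotone convergence); the paper's argument yields the strictly stronger maximal-function inequality \eqref{angelica} and a reusable two-weight maximal theorem, which the authors indeed exploit again in Section \ref{pttfive}. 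Your lower-bound computation with the test function $f_a=(I^*\mu)^{p'-1}\pi^{1-p'}\chi_{S(a)}$ is a primal counterpart of the paper's testing of the dual inequality on $g=\chi_{S(a)}$, and the exponent bookkeeping checks out; the only point worth noting there is that to divide by $\|f_a\|_{L^p(\pi)}^{p}$ one should a priori truncate to finite subtrees when $I^*[(I^*\mu)^{p'}\pi^{1-p'}](a)$ is not yet known to be finite, a bookkeeping step the paper also elides.
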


We say that a measure is \it a Carleson measure \rm for the discrete potential $g$ 
 if it satisfies the imbedding 
(\ref{carleson}).
Recall that $g(\zeta,\alpha)=\chi_{P(\zeta)}(\alpha)$.
 We say that $\mu$ satisfies the \it testing condition \rm if $[\mu]<\infty$. 
The theorem says that Carleson measures are exactly those satisfying the testing condition.



\begin{proof}[Proof of the Theorem \ref{carlesonmeasures}]
Inequality (\ref{carleson}) means that 
$$
I:\ell^p(\pi)\to L^p(\mu)
$$
is bounded and, by duality, this is equivalent to the boundedness, with the same norm 
(which we call $|||\mu|||$ in the course of the proof),
of
$$
I^*_\mu:L^\pp(\mu)\to\ell^\pp(\pi^{1-\pp}),
$$
where, it can be easily checked, 
$$
I^*_\mu g(\alpha)=I^*(gd\mu):=\int_{S(\alpha)}gd\mu.
$$

For the duality, we use the $\ell^2$ inner product to have $[\ell^p(\pi)]^*=\ell^\pp(\pi^{1-\pp})$ 
and the $L^2(\mu)$ inner product to have $[L^p(\mu)]^*=L^\pp(\mu)$.

The boundedness of $I^*_\mu$ is expressed by the inequality
\begin{equation}
\label{gelateria}
\sum_{\alpha\in T}|I^*(gd\mu)(\alpha)|^\pp\pi^{1-\pp}(\alpha)\le|||\mu|||^\pp\int_{\oT}|g|^\pp d\mu,
\end{equation}
with a  finite constant $|||\mu|||$, which  obviously has to be checked on positive $g$'s only. We make the left hand side of (\ref{gelateria}) larger. Introduce the maximal operator
$$
\MMM_\mu g(\zeta):=\sup_{\beta\in\PPP(\zeta)}\frac{I^*(gd\mu)}{I^*\mu},
$$
with $\zeta$ in $\oT$ and $g$ positive and measurable on $\oT$. 

We will show that, in fact,
\begin{equation}
\label{angelica}
\sum_{\alpha\in T}[\MMM_\mu g(\alpha)]^\pp I^*(gd\mu)^{\pp}(\alpha)\pi^{1-\pp}(\alpha)
\le|||\mu|||\int_\oT g^\pp d\mu.
\end{equation}

In fact, (\ref{angelica}) follows from the more general inequality
\begin{equation}
\label{laura}
\int_\oT[\MMM_\mu g]^\pp d\sigma\le C(p)\int_\oT g^\pp \MMM_\mu(d\sigma)d\mu,
\end{equation}
which holds for a positive, Borel measure $\sigma$ on $\oT$ with a constant $C(p)$ which only depends on $p$ in $(1,\infty)$. Inequality (\ref{laura}) is proved in Theorem \ref{siblings} below.

Let us verify that (\ref{laura}) implies (\ref{angelica}). Let $\sigma$ be the discrete measure
$$
\sigma(\alpha)=I^*\mu(\alpha)^\pp\pi^{1-\pp}(\alpha),\ \mbox{if}\ \alpha\in T.
$$

The left hand side of (\ref{laura}) is the left hand side of (\ref{angelica}). On the right hand side, observe that
$$
\sup_{\zeta\in\oT}\MMM_\mu(\sigma)(\zeta)=[\mu],
$$
hence,
$$
\int_\oT g^\pp \MMM_\mu(d\sigma)d\mu\le[\mu]\int_\oT g^\pp d\mu.
$$
The proof that $|||\mu|||\le C(p)[\mu]$ is complete.

To prove the other direction, it suffices to test inequality (\ref{gelateria}) on functions of the form $g=\chi_{S(\alpha)}$, $\alpha$ in $T$.
\end{proof}

Observe that testing (\ref{angelica}) on the same functions $g=\chi_{S(\alpha)}$, one obtains that the stronger (\ref{angelica}) and the weaker (\ref{gelateria}) hold together with comparable constants, or together fail. 

\begin{theorem}
\label{siblings}
Given positive Borel measures $\mu$ and $\sigma$ on $\oT$, let 
$$
\MMM_\mu(\sigma):=
\sup_{\beta\in\PPP(\zeta)}\frac{I^*(d\sigma)}{I^*\mu}.
$$
Then, (\ref{laura}) holds.
\end{theorem}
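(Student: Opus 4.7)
The plan is to establish~\eqref{laura} by a Fefferman--Stein style argument: I view $\MMM_\mu$ as a sublinear operator from $L^\pp(\MMM_\mu\sigma\,d\mu)$ into $L^\pp(\sigma)$, prove the endpoint weak-type inequality at $L^1$, and then obtain the strong $(\pp,\pp)$ bound via a layer-cake truncation. The tree setting makes both ingredients essentially effortless: level sets of $\MMM_\mu g$ are disjoint unions of ``successor tents,'' and on each such tent one automatically has a pointwise lower bound for $\MMM_\mu\sigma$.

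\textbf{Step 1 (weak-type $(1,1)$).} Given $g\ge 0$ and $\lambda>0$, let $\mathcal{A}_\lambda \subset T$ be the family of vertices $\alpha$ that are maximal (closest to the root) subject to
$$
\int_{S(\alpha)} g\,d\mu > \lambda\,\mu(S(\alpha)).
$$
Since every predecessor set $\PPP(\zeta)$ is linearly ordered, one checks that $\{\MMM_\mu g > \lambda\} = \bigsqcup_{\alpha \in \mathcal{A}_\lambda} S(\alpha)$, the disjointness being a consequence of maximality (two non-comparable vertices have disjoint successor sets). For any $\zeta \in S(\alpha)$ with $\alpha \in \mathcal{A}_\lambda$, the very definition of $\MMM_\mu\sigma$ gives $\MMM_\mu\sigma(\zeta) \ge \sigma(S(\alpha))/\mu(S(\alpha))$; combining this with the stopping estimate,
$$
\lambda\,\sigma(S(\alpha)) \le \frac{\sigma(S(\alpha))}{\mu(S(\alpha))}\int_{S(\alpha)} g\,d\mu \le \int_{S(\alpha)} g\,\MMM_\mu\sigma\,d\mu,
$$
and summing over the disjoint $\alpha \in \mathcal{A}_\lambda$ yields the weak-type bound
$$
\sigma(\{\MMM_\mu g > \lambda\}) \le \frac{1}{\lambda} \int_{\oT} g\,\MMM_\mu\sigma\,d\mu.
$$

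\textbf{Step 2 (layer cake with truncation).} For each $\lambda > 0$ write $g = g_\lambda + g^\lambda$ with $g_\lambda := g\,\chi_{\{g > \lambda/2\}}$. Sublinearity of $\MMM_\mu$ together with the trivial bound $\MMM_\mu g^\lambda \le \|g^\lambda\|_\infty \le \lambda/2$ gives $\{\MMM_\mu g > \lambda\} \subseteq \{\MMM_\mu g_\lambda > \lambda/2\}$. Applying Step~1 to $g_\lambda$ and then Fubini in the layer-cake representation,
$$
\int_{\oT} [\MMM_\mu g]^\pp\,d\sigma = \pp \int_0^\infty \lambda^{\pp-1} \sigma(\{\MMM_\mu g > \lambda\})\,d\lambda \le 2\pp \int_{\oT} g(\zeta)\,\MMM_\mu\sigma(\zeta) \int_0^{2g(\zeta)} \lambda^{\pp-2}\,d\lambda\,d\mu(\zeta).
$$
The inner integral evaluates to $(2g(\zeta))^{\pp-1}/(\pp-1)$ (using $\pp>1$), so the right-hand side reduces to $\frac{2^\pp \pp}{\pp-1}\int_{\oT} g^\pp\,\MMM_\mu\sigma\,d\mu$, proving~\eqref{laura} with $C(p) = 2^\pp\pp/(\pp-1)$.

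The only genuine obstacle is the stopping-time identification in Step~1, but on a tree the highest vertices meeting the threshold always exist and produce disjoint tents by pure combinatorics; the pointwise lower bound $\MMM_\mu\sigma \ge \sigma(S(\alpha))/\mu(S(\alpha))$ on each tent is then immediate, and no information about the weight $\pi$ enters the argument, which is why this inequality is a purely dyadic/real-variable fact.
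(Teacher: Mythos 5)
Your proof is correct and follows essentially the same route as the paper: the identical stopping-time decomposition of $\{\MMM_\mu g>\lambda\}$ into disjoint tents $S(\alpha)$ and the same pointwise lower bound $\MMM_\mu\sigma\ge\sigma(S(\alpha))/\mu(S(\alpha))$ on each tent yield the weak $(1,1)$ estimate, after which the paper simply pairs it with the trivial $L^\infty$ bound and invokes Marcinkiewicz interpolation. Your Step 2 just unpacks that interpolation explicitly via the truncation $g=g_\lambda+g^\lambda$ and the layer-cake formula, which has the minor virtue of producing the explicit constant $2^{\pp}\pp/(\pp-1)$ but is not a genuinely different argument.
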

\begin{proof} We show that $\MMM_\mu$ satisfies a weak $(1,1)$ inequality. Let $\lambda>0$ and let $E=\{\zeta\in\oT:\ \MMM_\mu(g)(\zeta)>\lambda\}$. Then, $E$ is \it disjoint \rm union (by the monotonicity of $\MMM_\mu g$ with respect to to the natural partial order on $T$) of sets $S(\alpha_j)$, $j=1,\dots,n$ in $T$ and
$$
\int_{S(\alpha_j)}gd\mu>\lambda\mu(S(\alpha_j))
$$
(by the minimality of $\alpha_j$ in $S(\alpha_j)$).

Thus, $\mu(S(\alpha_j))>0$ and
\begin{eqnarray*}
\sigma(E)&=&\sum_j\sigma(S(\alpha_j))\crcr
&=&\sum_j\frac{\sigma(S(\alpha_j))}{\mu(S(\alpha_j))}\mu(S(\alpha_j))\crcr
&\le&\frac{1}{\lambda}\sum_j\MMM\sigma(\alpha_j)\int_{S(\alpha_j)}gd\mu\crcr
&\le&\frac{1}{\lambda}\sum_{j}\int_{S(\alpha_j)}g\MMM_\mu\sigma d\mu\crcr
&\le&\frac{1}{\lambda}\int_\oT g\MMM_\mu\sigma d\mu.
\end{eqnarray*}

The $L^\infty$ inequality is obvious. Suppose $g\le C$ $\MMM_\mu(\sigma)-a.e.$. We can assume that $\sigma\ne0$ (otherwise (\ref{laura}) holds trivially), hence, $\MMM_\mu\sigma(\zeta)>0$ for all $\zeta$ in $\oT$.  This implies that $g\le C$ $\mu-a.e.$, so that $\int_{S(\alpha)}gd\mu/\mu(S(\alpha))\le C$ for all $\alpha$ in $T$ (set it equal to $0$ if $\mu(S(\alpha))$). Then, $\MMM_\mu g\le C$ everywhere, hence $\sigma-a.e.$  Marcinkiewicz interpolation gives now the theorem.
\end{proof}

\medskip

\noindent
\subsubsection{Trace Inequalities on Ahlfors-regular spaces.} 
Let $K:X\times X\to[0,\infty]$ be the kernel defined in (\ref{nucleo}), with $s\in[1/\pp,1)$, where $X$ is a Ahlfors $Q$-regular space. We say that a positive Borel measure $\mu$ on $X$ \it satisfies the trace inequality \rm for the space $(X,m,\rho)$, the exponent $p$ and the kernel $K$ if the inequality
\begin{equation}
\label{traceineq}
\int_X\left(\int_XK(x,y)f(y)dm(y)\right)^pd\mu\le C(\mu)\int_X f^pdm
\end{equation}
holds for all positive, Borel $f$.

There is an enormous amount of literature on weighted trace inequalities (see, e.g., \cite{KS}, \cite{KV}, \cite{COV} and the literature quoted there) and we make no claim of originality for the results we are going present below. What we are interested in is the relationship between discrete and continuous trace inequalities, and between different necessary and sufficient conditions.

Let $T$ be the tree associated with $X$ and $K_T$ be the kernel on $\partial T$ which corresponds to the same choice of $p$ and $s$. 

\begin{theorem}
\label{tracciateorema}
The measure $\mu$ satisfies the trace inequality for $K$ if and only if $\Lambda^*\mu$ satisfies the trace inequality for $K_T$.
\end{theorem}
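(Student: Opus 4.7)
My plan is to reduce both sides of the equivalence to a common discrete testing condition indexed by the vertices of the tree. Starting with the trace inequality on $X$, duality (and the symmetry of $K$) recasts it as
$$
\int_X \bigl(K(g\,d\mu)\bigr)^{p'}\,dm \;\lesssim\; \int_X g^{p'}\,d\mu \qquad (g\ge 0).
$$
Applying Lemma~\ref{lemmaII} to replace $K(g\,d\mu)$ by the graph sum $I_G(g\,d\mu)$, and then invoking Wolff's half of the Muckenhoupt--Wheeden inequality (Corollary~\ref{mordano}) to pull the $p'$-th power inside the sum over $P_G(x)$, this becomes
$$
\sum_{\alpha\in T}\frac{(g\mu)(\overline{Q_\alpha})^{p'}}{m(\alpha)^{sp'-1}}\;\lesssim\;\int_X g^{p'}\,d\mu,
$$
after exchanging order of summation and integration and using Ahlfors regularity together with the bounded connectivity of $G$ (Lemma~\ref{edue}) to see that $m\{x:\alpha\in P_G(x)\}\approx m(\alpha)$.

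\textbf{The analogue on $\partial T$.} Performing the identical sequence of steps on $\partial T$ equipped with $\Lambda^*\mu$ and the kernel $K_T$---where $\partial T$ is Ahlfors regular by Proposition~\ref{bazzano} and the relevant ``graph'' is just the tree $T$ itself, so no nontrivial horizontal edges appear---the trace inequality for $K_T$ with measure $\Lambda^*\mu$ is equivalent to
$$
\sum_{\alpha\in T}\frac{(\tilde g\,\Lambda^*\mu)(\partial S(\alpha))^{p'}}{m(\alpha)^{sp'-1}}\;\lesssim\;\int_{\partial T}\tilde g^{p'}\,d\Lambda^*\mu\qquad (\tilde g\ge 0),
$$
where I have used $m_T(\partial S(\alpha))=m(\alpha)$ (Remark~\ref{noteonmu}) for the Fubini step.

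\textbf{Matching the two conditions.} Proposition~\ref{lambdaomega}(ii) gives $\Lambda^*\mu(\partial S(\alpha))\approx\mu(\overline{Q_\alpha})$ uniformly in $\alpha$; more generally, for $g\ge 0$ on $X$ the choice $\tilde g:=g\circ\Lambda$ yields $\int\tilde g^{p'}\,d\Lambda^*\mu=\int g^{p'}\,d\mu$ and $(\tilde g\,\Lambda^*\mu)(\partial S(\alpha))\approx (g\mu)(\overline{Q_\alpha})$. To handle $\tilde g$ not of the form $g\circ\Lambda$, I would invoke Theorem~\ref{carlesonmeasures}: its proof shows that each of the two discrete inequalities above is equivalent to a testing condition obtained by specialization to indicators $\chi_{\overline{Q_\alpha}}$ (respectively $\chi_{\partial S(\alpha)}$). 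The resulting testing conditions involve only the scalars $\mu(\overline{Q_\alpha})$ (resp.\ $\Lambda^*\mu(\partial S(\alpha))$) and $m(\alpha)$, hence coincide up to absolute constants, finishing the proof.

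\textbf{Main obstacle.} The most delicate step is the reduction on the $X$-side: replacing the continuous kernel $K$ by $I_G$ is cheap (Lemma~\ref{lemmaII}), but converting the $p'$-th power of the sum $I_G(g\,d\mu)$ into a sum of $p'$-th powers is genuinely nontrivial and relies on Wolff's half of Muckenhoupt--Wheeden via Corollary~\ref{mordano}. A secondary subtlety is that Theorem~\ref{carlesonmeasures} is stated for trees, so the equivalence with the testing condition on the $X$-side either requires a modest extension of its proof to the graph $G$ using bounded connectivity, or a preliminary transfer of the $X$-side discrete sum to the tree via $\Lambda^*\mu$ at the outset (which is possible precisely because the common discrete sum obtained in the first two paragraphs does not depend on the graph structure, only on the tree indexing).
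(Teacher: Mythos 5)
Your proof is correct, and for the harder implication it takes a genuinely different (and arguably more streamlined) route than the paper. Both arguments share the same skeleton for the easy direction: dualize the trace inequality, discretize the energy $\int_X K(g\,d\mu)^{p^\prime}dm$ via Lemma \ref{lemmaII} and Wolff's half of Muckenhoupt--Wheeden (this is exactly Theorem \ref{treeandgraph}), and use $\Lambda^*(g\,d\mu)=(g\circ\Lambda)\,d\Lambda^*\mu$ to see that the tree trace inequality, applied to the pulled-back data, yields the inequality on $X$. The divergence is in the converse, where the obstruction is precisely that the $X$-side inequality only controls test functions on $\partial T$ of the form $g\circ\Lambda$, and $\chi_{\partial S(\alpha)}$ is generally not of that form because of the edge effect. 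The paper resolves this by passing from the trace inequality on $X$ to the capacitary condition $\mu(E)\lesssim\Cpc_X(E)$, transferring it to $\partial T$ via Theorem \ref{maine}, and then citing the duality argument of Kerman--Sawyer to deduce the testing condition $[\Lambda^*\mu]<\infty$, which feeds into Theorem \ref{carlesonmeasures}. You instead specialize the discretized dual inequality directly to indicators $g=\chi_{\overline{Q_a}}$: since $\mu(\overline{Q_\alpha})\approx\Lambda^*\mu(\partial S(\alpha))$ by Proposition \ref{lambdaomega}(ii), this immediately produces $[\Lambda^*\mu]<\infty$, and Theorem \ref{carlesonmeasures} upgrades it to the full tree trace inequality, which you then convert back to the $K_{\partial T}$-trace inequality by running the Wolff discretization on the Ahlfors-regular space $\partial T$. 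Your route avoids the detour through capacities and the external reference entirely, at the price of having to note (as you do) that the discretized $X$-side sum depends only on the tree-indexed scalars $\mu(\overline{Q_\alpha})$ and $m(\alpha)$, so that no genuine extension of Theorem \ref{carlesonmeasures} to the graph $G$ is needed. One small reweighting of emphasis: the Wolff step you flag as the ``main obstacle'' is already packaged in the paper as Theorem \ref{treeandgraph}, whereas the step you call secondary --- handling test functions on $\partial T$ that are not pullbacks --- is the actual crux of the converse direction; your indicator-specialization argument does dispose of it correctly.
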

\begin{proof}
For a Borel measure $\omega$ on $X$, let $K\omega(x)=\int_X K(x,y)d\omega(y)$. By duality and symmetry of the kernel $K$, $\mu$ satisfies the trace inequality for $K$ if and only if the inequality

\begin{equation}
\label{ammaniti}
\int_X K(gd\mu)^\pp dm\le C(\mu)\int_X g^\pp d\mu
\end{equation}
hold for all positive Borel functions $g$.

That is,
\begin{eqnarray}
C(\mu)\int_X g^\pp d\mu&\ge&\EE_X(gd\mu)\crcr
&\approx&\EE_{\partial T}(\Lambda^*(gd\mu)).
\end{eqnarray}

For $g:X\to\RR^+$, define $\Lambda^*g:=g\circ\Lambda$.  Then one can easily show that
\begin{equation}
\label{noioso}
\Lambda^*(gd\mu)=(\Lambda^*g)d(\Lambda^*\mu).
\end{equation}

To verify \eqref{noioso}, first check the statement for simple $g$, then use Monotone Convergence Theorem.  By the \eqref{noioso}, 
$$
\EE_{\partial T}(\Lambda^*(gd\mu))=\EE_{\partial T}((\Lambda^*g)(d\Lambda^*\mu))
$$
and, since
$$
\int_Xg^\pp d\mu\approx\int_{\partial T}\Lambda^*(g^\pp d\mu)=\int_{\partial T}(\Lambda^*g)^\pp d\Lambda^*\mu,
$$
the wished inequality (\ref{ammaniti}) is equivalent to the tree inequality
$$
\EE_{\partial T}((\Lambda^*g)(\Lambda^*\mu)))\le C(\mu)\int_{\partial T}(\Lambda^*g)^\pp d\Lambda^*\mu.
$$

Set $G=\Lambda^* g$ and $M=\Lambda^*\mu$. The inequality
$$
\EE_{\partial T}(GdM)\le C(M)\int_{\partial T}G^\pp dM
$$
expresses the fact that $M$ satisfies the trace inequality for $K_{\partial T}$.  Hence, if $\Lambda^*\mu$ satisfies the trace inequality on $\partial T$, $\mu$ does on $X$.

\smallskip

In the other direction, if (\ref{ammaniti}) holds, then $\mu$ satisfies the capacitary inequality 
$\mu(E)\le C(\mu)\Cpc_X(E)$, when $E$ is a closed subset of $X$:  test the inequality dual to 
(\ref{ammaniti}) on those $f$ such that $f\ge1$ on $E$. By the equivalence of continuous and discrete 
capacities, this implies that $\Lambda^*\mu(F)\le C(\mu)\Cpc_{\partial T}(F)$ for closed subsets of 
$\partial T$. It is well known (see e.g. \cite{KS}) that the capacitary condition implies, via duality, 
the testing condition $[\Lambda^*\mu]<\infty$ 
(see also the next subsection) and this, in turn, implies the discrete trace inequality by 
Theorem \ref{carlesonmeasures}.
 \end{proof}

\begin{remark}
In view of the results of Section 5.1, the fact that $M$ satisfies the trace inequality for $K_{\partial T}$ is equivalent to the fact that $M$ satisfies (\ref{carleson}). Hence, we have a dyadic  characterization for the measures satisfying the trace inequalities on Ahlfors-regular spaces.
\end{remark}

\begin{corollary}
\label{ultimaspeme}
A nonnegative Borel measure $\mu$ on $X$ satisfies the trace inequality \eqref{traceineq} if and only if it satisfies the testing condition of Kerman-Sawyer type
\begin{equation}
\label{francaviglia}
\int_B\left[\int_B K(x,y)d\mu(y)\right]^\pp dm(x)\le C(\mu)\mu(B)
\end{equation}
uniformly over the metric balls $B$ in $X$. The constant $C(\mu)$ is comparable with $[\mu]$.
\end{corollary}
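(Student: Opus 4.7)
Necessity of \eqref{francaviglia} is immediate: by duality, the trace inequality \eqref{traceineq} is equivalent to
\[
\int_X[K(g\,d\mu)]^{\pp}\,dm\le C(\mu)\int_X g^{\pp}\,d\mu,
\]
and testing this with $g=\chi_B$ together with $\int_B\le\int_X$ yields \eqref{francaviglia} with constant $\lesssim C(\mu)$.

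For sufficiency, I chain together the equivalences from the preceding sections. By Theorem \ref{tracciateorema}, the trace inequality for $\mu$ on $X$ is equivalent to the trace inequality for $\Lambda^*\mu$ on $\partial T$, which by Theorem \ref{carlesonmeasures} is equivalent to the tree testing condition $[\Lambda^*\mu]<\infty$. Using Proposition \ref{lambdaomega} (which gives $\Lambda^*\mu(\partial S(\alpha))\approx\mu(\overline{Q}_\alpha)$) together with $\pi_s(\alpha)^{\pp-1}=m(Q_\alpha)^{s\pp-1}$, the tree testing condition unfolds to the dyadic condition
\[
(\ast)\qquad\sup_{\alpha\in T}\frac{1}{\mu(\overline{Q}_\alpha)}\sum_{\beta\ge\alpha}\frac{\mu(\overline{Q}_\beta)^{\pp}}{m(Q_\beta)^{s\pp-1}}<\infty,
\]
and the corollary reduces to showing $(\ast)\iff$\eqref{francaviglia}.

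To close this last equivalence I use Lemma \ref{lemmaII} and the energy--sum identity underlying Theorem \ref{treeandgraph}. For $x\in\overline{Q}_\alpha$, Lemma \ref{lemmaII} gives
\[
K(\chi_{\overline{Q}_\alpha}d\mu)(x)\approx\sum_{\beta\in P_G(x)}\frac{\mu(\overline{Q}_\alpha\cap\overline{Q}_\beta)}{m(Q_\beta)^{s}};
\]
the ancestor contribution ($\beta<\alpha$) forms a geometric-type sum dominated by its deepest term $\mu(\overline{Q}_\alpha)/m(Q_\alpha)^{s}$ via Ahlfors-regularity, which after raising to the $\pp$-th power and integrating over $\overline{Q}_\alpha$ is absorbed into the $\beta=\alpha$ term of the descendant sum, yielding
\[
\int_{\overline{Q}_\alpha}\bigl[K(\chi_{\overline{Q}_\alpha}d\mu)\bigr]^{\pp}\,dm\approx\sum_{\beta\ge\alpha}\frac{\mu(\overline{Q}_\beta)^{\pp}}{m(Q_\beta)^{s\pp-1}}.
\]
For $(\ast)\Rightarrow$\eqref{francaviglia}: cover any ball $B$ by the boundedly many dyadic cubes $\overline{Q}_{\alpha_1},\dots,\overline{Q}_{\alpha_N}$ at scale $\approx\mathrm{diam}(B)$ (with $N$ bounded by Lemma \ref{edue}), majorize $\chi_B\,d\mu\le\sum_i\chi_{\overline{Q}_{\alpha_i}}\,d\mu$, and apply $(\ast)$ to each piece. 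For the converse: inscribe $\overline{Q}_\alpha$ in a ball $B$ with $m(B)\approx m(Q_\alpha)$ and combine \eqref{francaviglia} with the monotonicity $K(\chi_{\overline{Q}_\alpha}d\mu)\le K(\chi_B\,d\mu)$.

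\textbf{Main obstacle.} The subtlest point is the converse direction \eqref{francaviglia}$\Rightarrow(\ast)$ without a doubling hypothesis on $\mu$: a ball $B\supseteq\overline{Q}_\alpha$ with $m(B)\approx m(Q_\alpha)$ may satisfy $\mu(B)\gg\mu(\overline{Q}_\alpha)$, and the naive argument delivers only an \emph{enlarged} form of $(\ast)$ in which the denominator is $\mu$ of a union of boundedly-many $G$-neighbors of $\overline{Q}_\alpha$. Bridging this gap requires either a stopping-time argument on $G$ exploiting bounded connectivity (Lemma \ref{edue}) that upgrades the enlarged testing to the sharp $(\ast)$, or, more cleanly, routing through the capacitary condition $\mu(E)\lesssim\Cpc_X(E)$ as in the proof of Theorem \ref{tracciateorema}, since \eqref{francaviglia} together with the energy--sum identity above already encodes this capacitary bound.
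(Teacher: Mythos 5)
Your overall route coincides with the paper's: necessity by testing the dual inequality \eqref{ammaniti} with $g=\chi_B$ and restricting the integration to $B$, and sufficiency by reducing to the implication \eqref{francaviglia} $\Rightarrow$ $[\Lambda^*\mu]<\infty$ and then invoking Theorems \ref{carlesonmeasures} and \ref{tracciateorema}. The necessity half is correct, as is the unfolding of $[\Lambda^*\mu]<\infty$ into your dyadic condition $(\ast)$ via Proposition \ref{lambdaomega}, and your lower bound $\int_{\overline{Q}_\alpha}[K(\chi_{\overline{Q}_\alpha}d\mu)]^{\pp}dm\gtrsim\sum_{\beta\ge\alpha}\mu(\overline{Q}_\beta)^{\pp}m(Q_\beta)^{1-s\pp}$ is sound. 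The ``main obstacle'' you flag is genuine, and it is exactly the step the paper dismisses with ``it is easy to see'': applying \eqref{francaviglia} to a ball $B\supseteq\overline{Q}_\alpha$ of comparable radius yields only $\sum_{\beta\ge\alpha}\mu(\overline{Q}_\beta)^{\pp}m(Q_\beta)^{1-s\pp}\lesssim\mu(B)$, and without doubling of $\mu$ the right-hand side is the $\mu$-measure of a union of boundedly many $G$-neighbours of $Q_\alpha$, not of $\overline{Q}_\alpha$ itself. Since the proof of Theorem \ref{carlesonmeasures} uses the sharp denominator $I^*\mu(a)$ (it is what makes $\sup_\zeta\MMM_\mu(\sigma)(\zeta)=[\mu]$), the enlarged condition cannot simply be substituted. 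So your proof is incomplete at precisely the point where the paper's own proof is an unproved assertion; you have located the gap but not closed it.

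Of your two proposed repairs, the capacitary one does not work as stated: the proof of Theorem \ref{tracciateorema} needs $\mu(E)\lesssim\Cpc_X(E)$ for \emph{all} closed $E$, and there it is obtained by testing the full dual trace inequality on admissible $f$ --- exactly what is unavailable when one assumes only \eqref{francaviglia}. The energy identity plus \eqref{francaviglia} gives at best a capacitary bound for balls (and even that requires an iteration to absorb $\mu(2B)$), and the capacity of a general compact set is not controlled by summing over a ball cover; deriving the full capacitary condition from ball testing is essentially the content of the Kerman--Sawyer theorem itself, so this route is circular. The stopping-time/enlarged-testing repair is the right kind of idea, but it is the actual missing work. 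A minor further point: your sketch of $(\ast)\Rightarrow$\eqref{francaviglia} by covering $B$ with cubes $\overline{Q}_{\alpha_i}$ produces $\sum_i\mu(\overline{Q}_{\alpha_i})$ on the right rather than $\mu(B)$; to localize one needs the monotonicity of the testing condition (Theorem \ref{monotono} applied to $\chi_B\mu\le\mu$). This direction is redundant anyway, since you already proved necessity directly from the trace inequality.
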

\begin{proof}
Clearly, the testing condition (\ref{francaviglia}) is implied by the dual (\ref{ammaniti}) to the trace inequality \eqref{traceineq}. In the other direction, it is easy to see that the testing condition (\ref{francaviglia}) implies the testing condition on trees, $[\Lambda^*\mu]<\infty$, which implies the discrete trace inequality, which implies in turn, by Theorem \ref{tracciateorema}, the trace inequality in $X$.
\end{proof}

\subsection{Defining Capacities via Carleson measures}
\label{pttfour}

\begin{theorem}
\label{cmcap}
Let $E\subseteq\oT$ be compact. Then,
\begin{equation}
\label{dimenticavo}
 \Cpc(E)=\sup_{supp( \mu)\subseteq E}\frac{ \mu(E)}{[ \mu]}.
\end{equation}
\end{theorem}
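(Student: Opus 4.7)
The plan is to prove the two inequalities separately, with the supremum attained (up to exact equality) by the capacitary (equilibrium) measure of $E$. The easy direction $\sup_\mu \mu(E)/[\mu] \le \Cpc(E)$ follows from testing the definition \eqref{carnorm} only at $a = o$. Indeed, for $\mu$ supported in $E$ one has $I^*\mu(o) = \mu(\oT) = \mu(E)$ and $I^*[(I^*\mu)^{\pp}\pi^{1-\pp}](o) = \sum_{\alpha \in T}(I^*\mu)^{\pp}(\alpha)\pi(\alpha)^{1-\pp} = \EE(\mu)$, so $[\mu]^{1/(p-1)} \ge \EE(\mu)/\mu(E)$. Rearranging and applying the dual formulation \eqref{capacissimo},
$$\frac{\mu(E)}{[\mu]} \le \frac{\mu(E)^p}{\EE(\mu)^{p-1}} \le \Cpc(E).$$

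For the reverse inequality I would exhibit the equilibrium measure $\sigma$ of $E$ as a measure that saturates the bound. The Frostman-type results recalled in Appendices~\hyperref[nlpt]{I}--\hyperref[pttI]{II} furnish $\sigma$ supported in $E$, normalized so that $\sigma(E) = \EE(\sigma) = \Cpc(E)$, with capacitary potential $\Phi = I\varphi$, $\varphi := (I^*\sigma)^{\pp-1}\pi^{1-\pp}$, satisfying $\Phi(\zeta) \le 1$ for $\sigma$-almost every $\zeta$. Direct inspection of \eqref{wolffpotential} shows $\Phi$ coincides with the Wolff potential $W(\sigma)$.

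The payoff is a short computation of $[\sigma]$. Fix $a \in T$, let $\sigma_a := \sigma|_{S(a)}$, and observe that $I^*\sigma_a(\beta) = I^*\sigma(\beta)$ for every $\beta \ge a$ since $S(\beta) \subseteq S(a)$. Fubini (applied exactly as in the derivation of $\EE(\sigma) = \int W(\sigma)\,d\sigma$ preceding \eqref{wolffpotential}) gives
$$I^*\bigl[(I^*\sigma)^{\pp}\pi^{1-\pp}\bigr](a) \;=\; \sum_{\beta \ge a}(I^*\sigma)^{\pp}(\beta)\pi(\beta)^{1-\pp} \;=\; \int_{S(a)} W_a(\sigma_a)\,d\sigma_a,$$
where $W_a$ denotes the Wolff potential rooted at $a$. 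Since $W_a(\sigma_a)(\zeta) \le W(\sigma)(\zeta) = \Phi(\zeta) \le 1$ for $\sigma$-a.e.\ $\zeta \in S(a)$, the right side is at most $\sigma(S(a)) = I^*\sigma(a)$, so $[\sigma]^{1/(p-1)} \le 1$. The $a = o$ bound from the first part gives the reverse, hence $[\sigma] = 1$ and $\sigma(E)/[\sigma] = \sigma(E) = \Cpc(E)$, closing the proof.

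The only nontrivial input is the equilibrium inequality $W(\sigma) = \Phi \le 1$ on $\supp(\sigma)$ — that is the hard part, but it is precisely the Frostman/min-max principle from axiomatic nonlinear potential theory that the authors have already quoted from \cite{AH} via the appendices. Once it is in hand, everything else is bookkeeping.
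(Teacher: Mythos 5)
Your proof is correct, and the easy half (testing $[\mu]$ at $a=o$ and invoking the dual formulation \eqref{capacissimo}) is identical to the paper's. The interesting difference is in the hard half. The paper also works with the equilibrium measure $\omega$ of $E$ and shows $[\omega]=1$, but it does so by proving the localization identity \eqref{rescale}: the restriction $\omega|_{E_a}$, rescaled by $(1-V(\omega)(a^{-1}))^{-1}$, is the extremal measure for $E\cap\oS(a)$ rooted at $a$. That claim requires a gluing/contradiction argument with admissible functions, and then homogeneity of the energy yields the exact value $(1-V(\omega)(a^{-1}))^{\pp-1}\le1$ for each ratio in \eqref{carnorm}. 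You bypass the localization of the extremal problem entirely: you write $I^*[(I^*\sigma)^{\pp}\pi^{1-\pp}](a)$ via Fubini as $\int_{S(a)}W_a(\sigma_a)\,d\sigma_a$, observe that the rooted Wolff potential $W_a(\sigma_a)$ is a partial sum (with nonnegative terms) of $W(\sigma)=V(\sigma)$, and then use only the equilibrium inequality $V(\sigma)\le1$ on $\supp(\sigma)$ from Theorem \ref{muV}. Your route is shorter and needs less from the extremal theory (no uniqueness or extremality of the localized measures), at the cost of losing the exact formula $(1-V(\omega)(a^{-1}))^{\pp-1}$ for the testing ratio, which the paper records in \eqref{bologna} and which pins down that equality holds only at $a=o$. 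Both arguments deliver $[\sigma]\le1$, and combined with the root computation $[\sigma]^{1/(p-1)}\ge\EE(\sigma)/\sigma(E)=1$ give $[\sigma]=1$, so the conclusion is the same. (One cosmetic point common to both proofs: the ratio in \eqref{carnorm} should be read as $0$ when $I^*\sigma(a)=0$, in which case the numerator vanishes too.)
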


To prove the theorem, we need another characterization of capacity, which holds for the classical capacity, but which is not included in \cite{AH}. Lacking a reference, we give a proof which works in the present context.


\begin{proposition}
 $$
\cpc(K)=\inf\{\EE( \mu):\ V( \mu)\ge1\ \mbox{on}\ K\}.
$$
\end{proposition}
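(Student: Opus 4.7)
The plan is to match the candidate quantity $\widetilde{\cpc}(K) := \inf\{\EE(\mu): V(\mu)\ge 1 \text{ on } K\}$ with $\cpc(K)$ by establishing two inequalities, where $V(\mu) = I[(I^{*}\mu)^{\pp-1}\pi^{1-\pp}]$ is the tree Wolff (nonlinear) potential from \eqref{wolffpotential}, and exploiting both definitions of capacity in \eqref{capacissimo}.

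For the direction $\cpc(K)\le \widetilde{\cpc}(K)$, I would convert any admissible measure $\mu$ into an admissible function for the ``function-infimum'' side of \eqref{capacissimo}. Set $f:=(I^{*}\mu)^{\pp-1}\pi^{1-\pp}$, so that $If=V(\mu)\ge 1$ on $K$, and $f$ is a candidate in \eqref{capacissimo}. The conjugate-exponent identities $p(\pp-1)=\pp$ and $p(1-\pp)+1=1-\pp$ give
\[
\|f\|_{\ell^{p}(\pi)}^{p}=\sum_{\alpha\in T}(I^{*}\mu(\alpha))^{p(\pp-1)}\pi(\alpha)^{p(1-\pp)+1}=\sum_{\alpha\in T}(I^{*}\mu(\alpha))^{\pp}\pi(\alpha)^{1-\pp}=\EE(\mu),
\]
so $\cpc(K)\le\|f\|_{\ell^{p}(\pi)}^{p}=\EE(\mu)$; taking the infimum gives the claim.

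For the reverse inequality $\widetilde{\cpc}(K)\le \cpc(K)$, I would invoke the capacitary (Frostman equilibrium) measure $\sigma$ supplied by the standard min-max duality recalled in Appendix \hyperref[nlpt]{I} and translated to the tree context in Appendix \hyperref[pttI]{II}. This measure is supported on $K$, satisfies $\sigma(K)=\EE(\sigma)=\cpc(K)$, and its nonlinear potential obeys $V(\sigma)\ge 1$ quasi-everywhere on $K$, together with $V(\sigma)\le 1$ on $\supp(\sigma)$ (so that $\int V(\sigma)\,d\sigma=\sigma(K)=\EE(\sigma)$ is the equality case of the duality between the two sides of \eqref{capacissimo}). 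Under the standard convention of nonlinear potential theory that ``$V(\mu)\ge 1$ on $K$'' means ``$V(\mu)\ge 1$ q.e.\ on $K$,'' the measure $\sigma$ is itself admissible and achieves $\widetilde{\cpc}(K)\le\EE(\sigma)=\cpc(K)$.

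The main obstacle is matching pointwise-versus-quasi-everywhere conventions. If the infimum is read literally as pointwise, an approximation is needed: the exceptional set $E:=\{\zeta\in K: V(\sigma)(\zeta)<1\}$ satisfies $\cpc(E)=0$, so for each $\varepsilon>0$ there is $\mu_{\varepsilon}$ with $V(\mu_{\varepsilon})\ge 1$ on $E$ and $\EE(\mu_{\varepsilon})<\varepsilon$ (obtained from the first direction applied to $E$). Since $\pp-1>0$, the map $\mu\mapsto V(\mu)$ is monotone in the sense $V(\sigma+\mu_{\varepsilon})\ge V(\sigma)\vee V(\mu_{\varepsilon})\ge 1$ pointwise on $K$; meanwhile, $\EE(\mu)^{1/\pp}=\|I^{*}\mu\|_{\ell^{\pp}(\pi^{1-\pp})}$ is a norm in $\mu$ by Minkowski's inequality, hence $\EE(\sigma+\mu_{\varepsilon})^{1/\pp}\le \cpc(K)^{1/\pp}+\varepsilon^{1/\pp}$. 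Letting $\varepsilon\to 0$ yields $\widetilde{\cpc}(K)\le\cpc(K)$ in the strict pointwise interpretation as well, completing the proof.
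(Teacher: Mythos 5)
Your two inequalities are exactly the ones the paper uses: the direction $\cpc(K)\le\widetilde{\cpc}(K)$ via the substitution $f=(I^{*}\mu)^{\pp-1}\pi^{1-\pp}$ and the exponent identities $p(\pp-1)=\pp$, $p(1-\pp)+1=1-\pp$, and the reverse direction by feeding in the equilibrium measure from the appendices, whose energy equals $\cpc(K)$ and whose nonlinear potential is $\ge1$ (q.e.) on $K$. In fact you are more explicit than the paper, which only treats $K\subseteq T$ finite (where every point has positive capacity, so the quasi-everywhere issue disappears) and leaves the second direction as a one-line appeal to the extremal measure.

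One step of your refinement is not justified as written. To upgrade from ``$V(\sigma)\ge1$ q.e.\ on $K$'' to a measure whose potential is $\ge1$ \emph{pointwise} on $K$, you claim that the null set $E=\{\zeta\in K:\ V(\sigma)(\zeta)<1\}$ admits measures $\mu_{\varepsilon}$ with $V(\mu_{\varepsilon})\ge1$ on $E$ and $\EE(\mu_{\varepsilon})<\varepsilon$, ``obtained from the first direction applied to $E$.'' The first direction only gives $\cpc(E)\le\widetilde{\cpc}(E)$, which for a null set is vacuous; what you need is the opposite inequality $\widetilde{\cpc}(E)\le\cpc(E)=0$, i.e.\ precisely the statement being proved, applied to $E$ --- and applying the extremal-measure theorem to $E$ again only yields $V\ge1$ up to another exceptional set. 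So this justification is circular. The fix is either to adopt the quasi-everywhere reading of the admissibility condition (which is the convention of Theorem \ref{extremalf} and Proposition \ref{elenco}, and evidently what the paper intends), or to give a direct construction of small-energy measures charging a given null set --- e.g.\ spreading mass $t$ over $\partial S(\alpha)$ for deep ancestors $\alpha$ of the null set and letting the depth grow --- which is possible on trees but is not a consequence of your first inequality. The rest of the approximation step (monotonicity of $V$ under $\mu\mapsto\mu+\mu_{\varepsilon}$ because $\pp-1>0$, and the Minkowski bound $\EE(\sigma+\mu_{\varepsilon})^{1/\pp}\le\EE(\sigma)^{1/\pp}+\EE(\mu_{\varepsilon})^{1/\pp}$) is correct.
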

\begin{proof}
Suppose first that $K\subseteq T$ and that it is finite. Given a measure $ \mu$, let $\varphi=(I^* \mu)^{\pp-1}\pi^{1-\pp}$. Then, $\EE( \mu)=\|\varphi\|_{L^p(\pi)}^p$ and $V( \mu)=I\varphi$. Let 
$$
{\mathcal C}(K)=\inf\{\EE( \mu):\ V( \mu)\ge1\ \mbox{on}\ K\}=\EE(\overline{ \mu}),
$$
where $\overline{ \mu}$ is the extremal measure (it exists by elementary compactness and finiteness of $K$), which satisfies $V(\overline{ \mu})=1$ on $K$ and it is unique.

Let $\overline{\varphi}=(I^*\overline{ \mu})^{\pp-1}\pi^{1-\pp}$. Then,
$$
\cpc(K)=\inf\{\|\varphi\|_{L^p(\pi)}^p:\ I\varphi\ge1\ \mbox{on}\ K\}
\le{\mathcal C}(K)=\EE(\overline{ \mu})=\|\overline{\varphi}\|_{L^p(\pi)}^p.
$$
Suppose there is $ \mu_0$ such that, with $\varphi_0=(I^* \mu_0)^{\pp-1}\pi^{1-\pp}$,
$$
\cpc(K)=\|\varphi_0\|_{L^p(\pi)}^p=\EE( \mu_0)
$$
and
$$
I\varphi_0=V( \mu_0)\ge1\ \mbox{on}\ K.
$$
\end{proof}

\begin{proof}[Proof of Theorem \ref{cmcap}]

We start with inequality $[\le]$.  For fixed $a\in T$ we denote by $\oT_a=\oS(a)$ the subtree of $\oT$ having root $a$ and we add a subscript $a$ to the corresponding tree objects: $Cap_a$ is the capacity of subsets of $\oT_a$,  $\EE_a$ is the energy in $\oT_a$, $\omega_a$ is the extremal measure in the definition of capacity, and so on. Let $E_a=E\cap\oT_a$. The extremal measure $\omega_a$ and the function 
$\varphi_a=(I_a^*\omega)^{\pp-1}\pi^\onepp$ satisfy

\begin{equation}
 \label{alltogethera}
\Cpc_a(E_a)=\omega_a(E_a)=\EE_a(\omega_a)=\|\varphi_a\|_{L^p(\oT_a,\pi)}^p.
\end{equation}

We \bf claim \rm that $\omega_a$ is a rescaling of the extremal measure for $E$ in $\oT$, $\omega$, restricted to $E_a$:
\begin{equation}
 \label{rescale}
\omega_a=\frac{\omega|_{E_a}}{1-I[(I^*\omega)^{\pp-1}\pi^{1-\pp}](a^{-1})}
=\frac{\omega|_{E_a}}{1-V(\omega)(a^{-1})}.
\end{equation}

In fact, $\omega_a$ minimizes $\EE_a( \mu)$ over all measures $ \mu$ such that $I_a[(I_a^* \mu)^{\pp-1}\pi^{1-\pp}](\xi)=V_a( \mu)(\xi)\ge1$ on $E_a$ (with the possible exception of a set having null-capacity). On the other hand, we claim that $\omega|_{E_a}$ minimizes $\EE_a( \mu)$ among all measures $ \mu$ on $E_a$ such that $V_a( \mu)(\xi)\ge1-V(\omega)(a^{-1})$ $q.e.$ on $E_a$. 

Suppose this is not the case. Then there exists a measure $\nu$ on $E_a$ such that $V_a(\nu)(\xi)\ge1-V( \omega)(a^{-1})$ for $q.e.$ $\xi\in E_a$ and 
$$
\EE_a(\nu)=\sum_{ T_a}(I^*\nu)^\pp\pi^\onepp<\sum_{ T_a}(I^*\omega)^\pp\pi^\onepp=\EE_a(\omega|_{E_a}).
$$

Let $\varphi$ be such that $V(\omega)=I\varphi$ in $\oT$ and $\varphi_1$ so that $V_a(\nu)=I_a\nu$ in $\oT_a$.  Define now a new function $\psi$ on $T$:
$$
\psi(x)=
\begin{cases}
 \varphi_1(x)\ \mbox{if}\ x\in T_a,\crcr
\varphi(x)\ \mbox{if}\ x\in T\setminus T_a.
\end{cases}
$$

We have
$$
I\psi(\xi)\ge 1\ q.e.\ \mbox{on}\ E,
$$
hence $\|\psi\|_{L^p(\pi)}^p\ge Cap(E)$. On the other hand,
$$
\|\psi\|_{L^p(\pi)}^p=\EE_a(\nu)+\left[\EE(\omega)-\EE_a(\omega)\right]<\EE(\omega)=Cap(E),
$$
and we have reached a contradiction.

The measure
$$
\lambda=\frac{\omega|_{E_a}}{1-V(\omega)(a^{-1})},
$$
then, minimizes $\EE_a( \mu)$ over the set of the measures $ \mu$ such that $V_a( \mu)(\xi)\ge1$ for $q.e.\ \xi$ in $E_a$, hence $\lambda=\omega_a$.  The \bf claim \rm is proved.


By the homogeneity of the energy,
\begin{eqnarray*}
 \EE_a(\omega|_{E_a})&=&(1-V(\omega)(a^{-1}))^\pp\EE_a(\omega_a)\crcr
&=&(1-V(\omega)(a^{-1}))^\pp\omega_a(E_a)\crcr
&=&(1-V(\omega)(a^{-1}))^{\pp-1}\omega(E_a).
\end{eqnarray*}

As a consequence,
\begin{equation}
 \label{bologna}
\frac{\sum_{x\ge a}(I^*\omega)^\pp\pi^{1-\pp}}{I^*\omega}=\frac{\EE_a(\omega|_{E_a})}{\omega(E_a)}
=(1-V(\omega)(a^{-1}))^{\pp-1}\le1,
\end{equation}
with equality if and only if $a=o$ (we use the default value $V(\omega)(o^{-1})=0$).

Hence, $[\omega]=1$ and
$$
Cap(E)=\omega(E)=\frac{\omega(E)}{[\omega]}.
$$
We now prove $[\ge]$. By definition of $[\cdot],$ $\EE( \mu)\le[ \mu]^{\pp-1} \mu(E)$ for all measures $ \mu$. Then,
$$
\frac{ \mu(E)}{[ \mu]}\le\frac{ \mu(E)}{\left(\frac{\EE( \mu)}{ \mu(E)}\right)^{p-1}}=
\frac{ \mu(E)^{p}}{\EE( \mu)^{p-1}}\le Cap(E),
$$
as wished.
\end{proof}

The proof above has an interesting consequence.

\begin{corollary}  
If $\omega$ is the extremal measure for $\cpc(E)$, with $E$ closed in $\overline{T}$, then $[\omega]=1$.
\end{corollary}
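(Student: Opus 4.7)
The plan is to read off the corollary directly from equation \eqref{bologna} established in the preceding proof of Theorem \ref{cmcap}. Recall the definition \eqref{carnorm}:
$$
[\omega] = \sup_{a\in T}\left\{\frac{I^*[(I^*\omega)^\pp\pi^{1-\pp}](a)}{I^*\omega(a)}\right\}^{p-1}.
$$
First, I would unfold the numerator using the tree identity $I^*[f](a)=\sum_{x\ge a}f(x)$, so that the ratio inside the supremum becomes exactly $\dfrac{\sum_{x\ge a}(I^*\omega)^\pp\pi^{1-\pp}}{I^*\omega(a)}$, the left-hand side of \eqref{bologna}.

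Next I would quote \eqref{bologna}, which asserts that this ratio equals $(1-V(\omega)(a^{-1}))^{\pp-1}$, with equality to $1$ if and only if $a=o$ (using the convention $V(\omega)(o^{-1})=0$). Since $V(\omega)\ge 0$ and $\pp-1>0$, the factor $(1-V(\omega)(a^{-1}))^{\pp-1}$ lies in $[0,1]$, so the supremum over $a\in T$ is achieved at $a=o$ and equals $1$. Raising to the power $p-1$ preserves this, giving $[\omega]=1$.

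There is no real obstacle here: the corollary is essentially a bookkeeping consequence of the identity \eqref{bologna} that was already derived in the course of proving Theorem \ref{cmcap}. The only point worth stating explicitly is the monotonicity: $a\mapsto V(\omega)(a^{-1})$ is nondecreasing along any geodesic from the root, so $(1-V(\omega)(a^{-1}))^{\pp-1}$ is maximized at $a=o$, which is why the supremum in $[\omega]$ is attained (and equals $1$) precisely at the root.
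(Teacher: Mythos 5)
Your proposal is correct and is essentially the paper's own argument: the authors state this corollary as an immediate consequence of the identity \eqref{bologna} established in the proof of Theorem \ref{cmcap}, exactly as you do. The only cosmetic difference is your added remark on the monotonicity of $a\mapsto V(\omega)(a^{-1})$, which is true but not needed since \eqref{bologna} already gives the bound $\le 1$ for every $a$ with equality at the root.
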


\subsection{Monotonicity of the Tree Condition}
\label{pttfive} 

First, we give a direct proof that the testing condition for the trace inequalities on trees ($[\mu]<\infty$) is monotone: if $\nu\le\mu$, then $[\nu]\lesssim[\mu]$. 

Then, we use this fact to give a direct proof that the testing condition and the capacitary condition are equivalent. This answers a question which was posed to us by Maz'ya a few years ago (private communication).

It is known (see \cite{KS}) that, by a duality argument, the capacitary condition implies the testing condition. We concentrate, then, on the opposite implication.  For a measure $ \mu$ on $\oT$, let $\sigma_\mu=(I^* \mu)^\pp\pi^{1-\pp}$.

\begin{theorem}
\label{monotono}
Let $ \mu$ be a measure on $\oT$ and let $\lambda$ be a measurable function on $\oT$, $0\le\lambda\le1$. If $I^*\sigma_\mu\le I^* \mu$ on $T$, then $I^*\sigma_{\lambda \mu}\le p\cdot I^*(\lambda \mu)$.
\end{theorem}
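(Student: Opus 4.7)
First, I would reduce the general case $0\le\lambda\le 1$ to the case $\lambda=\chi_A$ for a Borel set $A\subseteq\overline{T}$. Using the layer-cake decomposition $\lambda=\int_0^1\chi_{\{\lambda>s\}}\,ds$ and setting $\mu_s=\mu|_{\{\lambda>s\}}$, one has $I^*(\lambda\mu)=\int_0^1 I^*\mu_s\,ds$. If the indicator inequality
\[
\sum_{y\ge a}(I^*\mu_s(y))^{\pp}\pi(y)^{1-\pp}\le p\cdot I^*\mu_s(a)
\]
is established for every $s$, then Minkowski's integral inequality applied to the $\ell^{\pp}(S(a),\pi^{1-\pp})$-norm yields
\[
\Bigl(\sum_{y\ge a}(I^*(\lambda\mu)(y))^{\pp}\pi(y)^{1-\pp}\Bigr)^{1/\pp}\le \int_0^1 (p\cdot I^*\mu_s(a))^{1/\pp}\,ds,
\]
and Jensen's inequality (using concavity of $t\mapsto t^{1/\pp}$ on the uniform probability space $[0,1]$) bounds the right-hand side by $p^{1/\pp}\,I^*(\lambda\mu)(a)^{1/\pp}$, giving the desired inequality with the same constant~$p$.

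Having made this reduction, set $\nu=\mu|_A$, $f=I^*\nu$, $F=I^*\mu$, so that $0\le f\le F$ pointwise on $T$. Fix $a\in T$. Writing $f^{\pp}=f\cdot f^{\pp-1}$ and swapping the order of summation gives the Fubini identity
\[
I^*\sigma_\nu(a)=\sum_{y\ge a}f(y)^{\pp}\pi(y)^{1-\pp}=\int_{\overline{S(a)}}W_a(\zeta)\,d\nu(\zeta),
\]
where $W_a(\zeta)=\sum_{a\le y\le\zeta}f(y)^{\pp-1}\pi(y)^{1-\pp}$ is the truncated Wolff-type potential of $\nu$ rooted at $a$. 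The same computation applied with $F$ in place of $f$, combined with the assumption $I^*\sigma_\mu\le I^*\mu$, rewrites the hypothesis as $\int_{\overline{S(a)}}\tilde W_a\,d\mu\le F(a)$, where $\tilde W_a$ is built from $F$ in the analogous way.

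The remaining and principal task is to establish $\int_{\overline{S(a)}}W_a\,d\nu\le p\,f(a)$. The naive bound $W_a\le\tilde W_a$ (from $f\le F$) combined with $\lambda\le 1$ and the hypothesis only yields the weaker estimate $I^*\sigma_\nu(a)\le F(a)$, which is insufficient when $f(a)\ll F(a)$. To extract the sharper factor $p$, I would exploit the self-consistent nature of $f$ through the ratio $h(y)=f(y)/F(y)\in[0,1]$, which satisfies the convex recursion
\[
h(y)=\frac{\mu(\{y\})}{F(y)}\lambda(y)+\sum_{y_i\text{ child of }y}\frac{F(y_i)}{F(y)}h(y_i),
\]
whose coefficients are nonnegative and sum to $1$. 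Since $\sigma_\nu=h^{\pp}\sigma_\mu$ pointwise, convexity of $t\mapsto t^{\pp}$ applied to this recursion and iterated down the tree from $a$ controls $\sum_{y\ge a}h(y)^{\pp}\sigma_\mu(y)=I^*\sigma_\nu(a)$ by a weighted sum of $\lambda(\zeta)^{\pp}$ against $\mu$; since $\lambda^{\pp}\le\lambda$ on $[0,1]$, the hypothesis then furnishes the bound $p\,f(a)$. The principal obstacle, and the source of the constant $p$, lies in the careful bookkeeping that aligns the convex-combination iteration of $h$ with the weighting structure of the hypothesis; this is where Young-type inequalities balancing the exponents $\pp-1$ and $\pp$ become essential, and this is precisely the combinatorial step that Maz'ya's question asks one to perform directly.
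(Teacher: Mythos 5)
There is a genuine gap: the final step, which is where the constant $p$ must be produced, is only gestured at, and the mechanism you propose does not close. Your reduction to $\lambda=\chi_A$ via the layer-cake formula, Minkowski's integral inequality and Jensen is correct (though, as it turns out, unnecessary), and the Fubini identity rewriting $I^*\sigma_\nu(a)$ as an integral of a truncated Wolff-type potential is fine. But the convex recursion for $h=f/F$, when iterated down the tree, telescopes to nothing more than H\"older's inequality, $\nu(S(y))^{\pp}\le \mu(S(y))^{\pp-1}\,I^*(\lambda^{\pp}\mu)(y)$; substituting this back into $\sum_{y\ge a}h(y)^{\pp}\sigma_\mu(y)$ and applying Fubini leaves you with
$$
\int_{\overline{S(a)}}\Bigl[\sum_{a\le y\le\zeta}\bigl(I^*\mu(y)\bigr)^{\pp-1}\pi(y)^{1-\pp}\Bigr]\lambda(\zeta)^{\pp}\,d\mu(\zeta),
$$
i.e.\ the truncated Wolff potential of $\mu$ (not of $\nu$) integrated against $\lambda^{\pp}d\mu$. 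The hypothesis $I^*\sigma_\mu\le I^*\mu$ only controls the integral of that bracket against $d\mu$ over boxes; it gives no pointwise control, so no bound of the form $p\cdot I^*(\lambda\mu)(a)$ falls out. Your own closing sentence concedes that the "careful bookkeeping" aligning the iteration with the hypothesis is still to be done --- but that bookkeeping \emph{is} the theorem.

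The missing idea is a stopping-time/maximal-function argument. The paper's proof introduces $\MMM_\mu\lambda(x)=\max_{o\le y\le x}I^*(\lambda\mu)(y)/I^*\mu(y)$, bounds $I^*\sigma_{\lambda\mu}(o)\le\sum_{x}[\MMM_\mu\lambda(x)]^{\pp}\sigma_\mu(x)$, and evaluates the right side by the distribution function: the superlevel set $\{\MMM_\mu\lambda>t\}$ is a \emph{disjoint} union of boxes $S(x_j)$ with $x_j$ minimal, the hypothesis gives $\sigma_\mu(S(x_j))\le I^*\mu(x_j)$, minimality gives $t\,I^*\mu(x_j)\le I^*(\lambda\mu)(x_j)$, and disjointness lets one sum to $I^*(\lambda\mu)(o)$; integrating $\pp\int_0^1 t^{\pp-2}\,dt=\pp/(\pp-1)=p$ produces exactly the constant $p$. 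If you want to salvage your outline, replace the convexity recursion by this weak-type estimate for $\MMM_\mu$; note also that with this route the reduction to indicators is not needed, since the argument handles general $0\le\lambda\le1$ directly.
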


\begin{corollary}
\label{mono}
If $\nu\le \mu$ and $ \mu,\ \nu$ are measures on $\oT$, then $[\nu]\le p^{p-1}[\mu]$.
\end{corollary}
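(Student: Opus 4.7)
The plan is to reduce the assertion to Theorem~\ref{monotono} by a rescaling argument that exploits the homogeneity of the functional $[\cdot]$. The case $[\mu]=\infty$ is vacuous and the case $[\mu]=0$ forces $\mu=0$ (hence $\nu=0$), so we may assume $0<[\mu]<\infty$.

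First I would record two easy facts. On the one hand, the definition \eqref{carnorm} gives the direct translation
\begin{equation*}
[\mu]^{\pp-1}=\sup_{a\in T}\frac{I^{*}\sigma_{\mu}(a)}{I^{*}\mu(a)},
\end{equation*}
so the hypothesis of Theorem~\ref{monotono}, namely $I^{*}\sigma_{\mu}\le I^{*}\mu$ on $T$, is exactly the statement $[\mu]\le 1$. On the other hand, as noted after \eqref{carnorm}, $[\cdot]$ is homogeneous of degree $1$: $[c\mu]=c[\mu]$ for $c>0$, a fact one checks directly from $\sigma_{c\mu}=c^{\pp}\sigma_{\mu}$ and $(\pp-1)(p-1)=1$.

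Next I would rescale. Set $\tilde\mu=\mu/[\mu]$ and $\tilde\nu=\nu/[\mu]$. By homogeneity $[\tilde\mu]=1$, so $I^{*}\sigma_{\tilde\mu}\le I^{*}\tilde\mu$ on $T$, i.e.\ $\tilde\mu$ satisfies the hypothesis of Theorem~\ref{monotono}. Since $\nu\le\mu$ as measures, $\tilde\nu\le\tilde\mu$, so $\tilde\nu$ is absolutely continuous with respect to $\tilde\mu$; by Radon--Nikodym there is a Borel measurable function $\lambda$ on $\oT$ (adjusted on a $\tilde\mu$-null set if necessary) with $0\le\lambda\le 1$ everywhere and $\tilde\nu=\lambda\tilde\mu$. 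Now Theorem~\ref{monotono} applied to $\tilde\mu$ and $\lambda$ yields
\begin{equation*}
I^{*}\sigma_{\tilde\nu}\;\le\;p\cdot I^{*}\tilde\nu\qquad\text{on }T.
\end{equation*}

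Finally I would translate back. Taking the supremum over $a\in T$ of $I^{*}\sigma_{\tilde\nu}(a)/I^{*}\tilde\nu(a)$ gives $[\tilde\nu]^{\pp-1}\le p$, hence $[\tilde\nu]\le p^{p-1}$. By homogeneity $[\tilde\nu]=[\nu]/[\mu]$, and therefore $[\nu]\le p^{p-1}[\mu]$, as claimed. The only genuine work lies in Theorem~\ref{monotono} itself; once that is in hand, the corollary is essentially a bookkeeping exercise in homogeneity, and I do not anticipate any obstacle beyond making the rescaling and the Radon--Nikodym step precise.
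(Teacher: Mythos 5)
Your proposal is correct and follows essentially the same route as the paper: the displayed proof there is really the distribution-function proof of Theorem~\ref{monotono} (reduced to the root by re-rooting), and the corollary is then exactly the homogeneity-plus-Radon--Nikodym bookkeeping you spell out, with $[\mu]^{\pp-1}=\sup_a I^*\sigma_\mu(a)/I^*\mu(a)$ and $(\pp-1)(p-1)=1$. Your write-up just makes explicit the rescaling step the paper leaves implicit.
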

\begin{proof}
By rescaling, it suffices to verify the hypothesis at the root.   We use a simple argument based on distribution functions.

Let
$$
{\mathcal M}_\mu \lambda(x)=\max_{o\le y\le x}\frac{I^*(\lambda \mu)(y)}{I^* \mu(y)}
$$
be the discrete maximal function we used in \cite{ARS2}. If necessary, we can extend the definition to $x\in\partial T$ in the obvious way. Then,
\begin{eqnarray*}
I^*\sigma_{\lambda \mu}(o)&=&\sum_{x\in T}\left[\frac{I^*(\lambda \mu)(x)}{I^* \mu(x)}\right]^\pp\left(I^* \mu(x)\right)^\pp\pi^{1-\pp}(x)\crcr
&\le&\sum_{x\in T} \left[{\mathcal M}_\mu \lambda(x)\right]^\pp\sigma_\mu(x)\crcr
&=&2\int_0^1t^{\pp-1}\sigma_\mu(\zeta\in\oT:\ {\mathcal M}_\mu \lambda(\zeta)>t)dt.
\end{eqnarray*}

Now, $\{\zeta\in\oT:\ \mathcal{M}_\mu\lambda(\zeta)>t\}=\bigsqcup_{j}S(x_j)$ is 
the disjoint union of Carleson boxes in $\oT$ (by the definition of the maximal function, we do not need to consider the closure of $S(x_j)$ in $\oT$). Then,
\begin{eqnarray*}
t\sigma_\mu(\zeta\in\oT:\ {\mathcal M}_\mu \lambda(\zeta)>t)&=&\sum_jt\sigma_\mu(S(x_j))\crcr
&\le&\sum_jtI^* \mu(x_j)\crcr
&\le&\sum_jI^*(\lambda \mu)(x_j)\crcr
&\le&I^*(\lambda \mu)(o).
\end{eqnarray*}

Inserting this estimate in the previous one and integrating, we have
$$
I^*\sigma_{\lambda \mu}(o)\le\frac{\pp}{\pp-1}\cdot I^*\sigma_{ \mu}(o)=p I^*\sigma_{ \mu}(o).
$$
\end{proof}

An immediate consequence of Theorem \ref{monotono} is that in (\ref{dimenticavo}) we do need to restrict to measures supported in $E$.

\begin{corollary}
\label{corolla}
Let $E\subseteq\oT$ be compact. Then,
\begin{equation}
\label{dimenticavo2}
 \Cpc(E)\le\sup_{\mu}\frac{ \mu(E)}{[\mu]}\le p^{p-1}\Cpc(E).
\end{equation}
\end{corollary}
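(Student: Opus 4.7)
The inequality on the left is essentially automatic from Theorem~\ref{cmcap}: restricting the supremum to measures supported in $E$ can only make it smaller, so
$$
\Cpc(E)=\sup_{\supp(\mu)\subseteq E}\frac{\mu(E)}{[\mu]}\le\sup_{\mu}\frac{\mu(E)}{[\mu]},
$$
with no work beyond noting that the right-hand supremum is over a larger class.

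For the inequality on the right, the natural move is to \emph{truncate} an arbitrary test measure to $E$. Given $\mu$ on $\oT$, set $\nu:=\mu|_E$, which is the same as $\lambda\mu$ for $\lambda=\chi_E$. Then $\supp(\nu)\subseteq E$ and $\nu(E)=\mu(E)$, so Theorem~\ref{cmcap} applied to $\nu$ yields
$$
\mu(E)=\nu(E)\le[\nu]\,\Cpc(E).
$$
The key point is now to control $[\nu]$ in terms of $[\mu]$; this is exactly Corollary~\ref{mono}, which gives $[\nu]\le p^{p-1}[\mu]$. Combining,
$$
\frac{\mu(E)}{[\mu]}\le p^{p-1}\Cpc(E),
$$
and taking the supremum over $\mu$ finishes the proof.

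The substantive content of the argument sits in Corollary~\ref{mono}, i.e.\ in the monotonicity of the testing functional $[\,\cdot\,]$ under pointwise domination of measures. That in turn is Theorem~\ref{monotono} applied to $\lambda=d\nu/d\mu\in[0,1]$, after the preliminary rescaling $\mu\mapsto\mu/[\mu]$ (legitimate since $[\,\cdot\,]$ is positively homogeneous of degree $1$, because $(\pp-1)(p-1)=1$), which normalises the hypothesis $I^{*}\sigma_\mu\le I^{*}\mu$ on $T$. Once that monotonicity is in hand, the remainder of the proof of Corollary~\ref{corolla} is a one-line chain of the two displayed inequalities above; no further technical obstacle appears.
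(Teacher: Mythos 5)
Your proof is correct and is exactly the argument the paper intends: the paper dispatches this corollary with the remark that it is ``an immediate consequence of Theorem~\ref{monotono},'' and the intended mechanism is precisely your restriction $\nu=\mu|_E$ combined with Corollary~\ref{mono} and Theorem~\ref{cmcap} (the same chain appears verbatim in the paper's proof of Theorem~\ref{diretto}). Nothing further is needed.
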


\subsection{Trace Inequalities: The Testing Condition implies the Capacitary Condition}

We now give a direct proof that the testing condition $[\mu]<\infty$, equivalent to the test inequality \eqref{carleson}, is equivalent to a capacitary condition (see \eqref{capcond} below). Both condition s are known, in a fairly general context, to characterize the Trace inequality; hence they are \it a priori \rm equivalent.  Here, however, we give \ it direct \rm proof of their equivalence. Better, we show that \eqref{carleson} implies \eqref{capcond}, since the opposite (direct) implication is known
(see \cite{KS2}). 

\begin{theorem}
\label{diretto}
Let $\mu$ be a positive, Borel measure on $\oT$. If $ \mu$ satisfies
\begin{equation}
\label{treecond}
\sup_{x\in T}\frac{I^*\left([I^* \mu]^\pp\pi^{1-\pp}\right)(x)}{I^* \mu(x)}\le C_1( \mu).
\end{equation}
then $\mu$ satisfies, for all closed sets $E$ in $\oT$,
\begin{equation}
\label{capcond}
 \mu(E)\le C_2(\mu)\Cpc(E).
\end{equation}
Moreover, $C_2( \mu)\le p^{p-1}C_1(\mu)$.  Conversely, \eqref{capcond} implies \eqref{treecond}.
\end{theorem}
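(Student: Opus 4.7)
The plan is to extract Theorem \ref{diretto} directly from the groundwork already laid, with Corollary \ref{corolla} doing the real work. First I would observe that hypothesis \eqref{treecond} is, by the very definition \eqref{carnorm} of the Carleson norm, just the quantitative statement $[\mu]\le C_1(\mu)$. Next I would invoke Corollary \ref{corolla}, whose second inequality in \eqref{dimenticavo2} is valid for \emph{all} positive Borel measures $\mu$ on $\oT$---not merely those supported in $E$---to obtain
$$\mu(E)\le p^{p-1}[\mu]\,\Cpc(E).$$
Combining the two inputs immediately yields \eqref{capcond} with the advertised constant $C_2(\mu)\le p^{p-1}C_1(\mu)$.

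For the converse implication, that \eqref{capcond} implies \eqref{treecond}, I would follow the standard Kerman--Sawyer duality argument from \cite{KS2} (already alluded to in the paragraph preceding the theorem statement): testing the capacitary inequality at the closed sets $E=\overline{\SSS(\alpha)}$, whose capacities and $\mu$-masses translate directly into the ratio appearing in \eqref{treecond}, recovers the testing condition at each vertex $\alpha$. One can equivalently observe that, since the trace inequality \eqref{carleson} is characterized both by the testing condition (Theorem \ref{carlesonmeasures}) and by the capacitary condition (standard nonlinear potential theory, cf.\ \cite{AH}, \cite{Mazya2}), the two conditions are \emph{a priori} equivalent; but the point of the section is to give a direct combinatorial proof that bypasses this detour.

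The only real obstacle---monotonicity of the Carleson norm, $\nu\le\mu\Rightarrow[\nu]\lesssim[\mu]$---was already dispatched in Theorem \ref{monotono} and underlies the fact that Corollary \ref{corolla} extends the formula of Theorem \ref{cmcap} from measures supported in $E$ to arbitrary measures on $\oT$. Once that monotonicity is available, the present theorem is a bookkeeping step that packages together \eqref{treecond}, \eqref{dimenticavo2}, and the definition of $[\mu]$, and requires no further new idea.
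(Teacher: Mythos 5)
Your proposal is correct and follows essentially the same route as the paper: the paper's own proof restricts $\mu$ to $E$, applies Theorem \ref{monotono} to get $[\mu|_E]\le p^{p-1}[\mu]$, and then invokes Theorem \ref{cmcap}, which is exactly the argument that Corollary \ref{corolla} packages, and it likewise defers the converse to the duality argument of \cite{KS2}. The only caveat is a bookkeeping one shared with the paper's own statement: by definition \eqref{carnorm} the hypothesis \eqref{treecond} gives $[\mu]\le C_1(\mu)^{p-1}$ rather than $[\mu]\le C_1(\mu)$, so strictly speaking the chain yields $C_2(\mu)\le p^{p-1}C_1(\mu)^{p-1}$.
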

\begin{proof}
Without loss of generality, suppose that $\mu$ satisfies \eqref{treecond} with $C_1( \mu)=1$.  Then, $ \mu_E:= \mu|_{E}\le \mu$ satisfies $[ \mu_E]\le p^{p-1}$ by Theorem \ref{monotono}. Hence,
\begin{eqnarray*}
\Cpc(E)&=&\sup_{\mbox{supp}(\nu)\subseteq E}\frac{\nu(E)}{[\nu]}\crcr
&\ge&\frac{ \mu(E)}{[ \mu_E]}\crcr
&\ge&p^{1-p} \mu(E).
\end{eqnarray*}
\end{proof}

Conditions of testing and of capacitary type are also both known to characterize the \it Carleson measures \rm for the holomorphic Dirichlet space. Let us recall definitions and results. The Dirichlet space ${\mathcal D}$ contains those functions $f$ which are holomorphic in the unit disc $D$ of the complex plane for which the norm
$$
\|f\|_{\mathcal D}^2=|f(0)|^2+\int_D|f^\prime(z)|^2dxdy
$$
is finite. A measure $\mu$ on $\overline{D}$ is \it Carleson \rm for ${\mathcal D}$ if the imbedding ${\mathcal D}\hookrightarrow L^2(\mu)$ is bounded. 
The problem of the boundary values is indeed of interest, when $\mu(\partial D)\ne0$: 
we direct the interested reader to \cite{ARS2} and \cite{Beu} and to the references therein for a 
discussion of this problem. We just mention that such problem is intimately related with the 
characterization itself of the Carleson measures. 
Carleson measures satisfy a sort of ``holomorphic trace inequality'', 
and it is not surpising that they can be  characterized by both capacitary 
and testing conditions. 
For $z=re^{i\theta}$ in $D$, let $S(z)=\{\rho e^{i\varphi}:\ r\le\rho\le1,\ |\theta-\varphi|\le2\pi(1-r)\}$ be the usual \it Carleson box \rm with vertex $z$ and let $I(z)=\partial S(z)\cap\partial D\subset S(z)$ be the part of its boundary lying on $\partial D$. Stegenga \cite{Ste} proved that
the Carleson measures for ${\mathcal D}$ are exactly those satisfying
\begin{equation}
\label{stegenga}
\mu(\cup_{j=1}^n S(z_j))\le C(\mu)\cpc(\cup_{j=1}^n I(z_j)),
\end{equation}
for all finite subsets $\{z_j\}$ of $D$ such that the arcs $I(z_j)$ are pairwise disjoint. 
Here the capacity is nothing other than the logarithmic capacity which corresponds 
to $s=\frac{1}{2}$ and $p=2$ on the unit circle $\mathbb{T}$.  On the other hand, a finite measure 
$\mu$ is Carleson if and only if it satisfies -for all $a$ in $D$- the testing condition
\begin{equation}\label{arsibero}
\int_{S(a)}[\mu(S(z)\cap S(a))]^2\frac{dxdy}{(1-|z|^2)^2}\le C(\mu)\mu(S(a)).
\end{equation}
This condition can be written in a discrete fashion (see, e.g., \cite{ARS1} and \cite{ARS2}), in the 
spirit of the present paper.
We remind the reader that the first testing condition for Carleson measures was found in \cite{KS} and 
it is different from \eqref{arsibero} (it is what we get from \eqref{arsibero} after applying the 
Muckenhoupt-Wheeden-Wolff inequality.  The left hand side of \eqref{arsibero} is analogous to the left 
hand side of \eqref{ar}, while the left hand side of the testing condition in \cite{KS} is analogous to 
the right hand side of \eqref{ar}).

Now, Stegenga's condition involves a measure in the interior and a capacity for a boundary set. 
In Theorem \ref{diretto}, by contrast, we have that measure and capacity either both live in the interior,
 or both live on the boundary. It is then interesting, we believe, to know whether the capacity of the 
interior set is comparable with that of its ``shadow'' on the boundary. The answer on trees, which might 
be transfered to various metric situations, is positive, under a mild assumption on the weight $\pi$.

\begin{lemma}
\label{scatole}
Suppose that, for all $x$ in $T$, 
\begin{equation}
\label{chescatole}
\Cpc(\partial S(x))\gtrsim d_\pi(x)^{1-p}
\end{equation}
(i.e., that $d_\pi(x)^{1-p}\approx\Cpc(\{x\})\approx\Cpc(\partial S(x))$). Then,
$$
\Cpc(S(E))\lesssim\Cpc(E)
$$
(i.e., $\Cpc(S(E))\lesssim\Cpc(E)$) whenever $E=\cup_j\partial S(x_j)$ is finite union of sets of the form $\partial S(x_j)$.
\end{lemma}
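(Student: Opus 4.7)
After discarding any $x_k$ that strictly descends from another $x_j$ (which changes neither $E$ nor $S(E)$), I may assume the $x_j$ are pairwise incomparable in $T$, and set $F:=\{x_1,\dots,x_n\}$. Since $I\varphi$ is nondecreasing along geodesics from the root, any $\varphi\ge0$ with $I\varphi(x_j)\ge1$ for every $j$ satisfies $I\varphi\ge1$ on all of $\overline{S(x_j)}$; consequently $\Cpc(S(E))\le\Cpc(F)$, and it suffices to prove $\Cpc(F)\lesssim\Cpc(E)$.

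For the reverse inequality I pass to the dual formulation of capacity and turn a competitor for $\Cpc(F)$ into one for $\Cpc(E)$ with comparable mass and energy. Given $\omega=\sum_j a_j\delta_{x_j}$ on $F$, set $B_j:=\Cpc_{x_j}(\partial S(x_j))$ and pick a probability measure $\mu_j$ on $\partial S(x_j)$ whose subtree energy satisfies $\EE_{x_j}(\mu_j)\le 2B_j^{1-\pp}$ (the infimum of subtree energies of probability measures equals $B_j^{1-\pp}$). Put $\nu:=\sum_j a_j\mu_j$, a measure on $E$ with $\nu(E)=\omega(F)$. Incomparability of the $x_j$ implies that for every $y\in T$ exactly one possibility occurs: either $y\le x_j$ for some $j$'s, in which case $I^*\nu(y)=\sum_{j:\,y\le x_j}a_j=I^*\omega(y)$; or $y$ strictly descends from a unique $x_j$, in which case $I^*\nu(y)=a_j\mu_j(\partial S(y))$; or $I^*\nu(y)=0$. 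Summing $(I^*\nu)^\pp\pi^{1-\pp}$ over $y\in T$ gives
\[
\EE(\nu)\le \EE(\omega)+\sum_j a_j^\pp\,\EE_{x_j}(\mu_j)\le \EE(\omega)+2\sum_j a_j^\pp B_j^{1-\pp}.
\]

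The remainder is absorbed into $\EE(\omega)$ as follows. Extending any function admissible for $\Cpc_{x_j}(\partial S(x_j))$ by zero outside $S(x_j)$ yields a function admissible for $\Cpc(\partial S(x_j))$, so $B_j\ge\Cpc(\partial S(x_j))\gtrsim d_\pi(x_j)^{1-p}$ by the hypothesis; since $(1-p)(\pp-1)=-1$, this gives $B_j^{1-\pp}\lesssim d_\pi(x_j)$. Using the superadditivity $\sum_k b_k^\pp\le (\sum_k b_k)^\pp$ (valid for $b_k\ge0$ and $\pp\ge1$) and Fubini,
\[
\sum_j a_j^\pp d_\pi(x_j)=\sum_{y\in T}\pi^{1-\pp}(y)\sum_{j:\,y\le x_j}a_j^\pp\le\sum_{y\in T}\pi^{1-\pp}(y)\,I^*\omega(y)^\pp=\EE(\omega).
\]
Therefore $\EE(\nu)\lesssim\EE(\omega)$, and the dual characterization of capacity yields $\Cpc(E)\ge\nu(E)^p/\EE(\nu)^{p-1}\gtrsim\omega(F)^p/\EE(\omega)^{p-1}$; taking the supremum over $\omega$ supported on $F$ gives $\Cpc(F)\lesssim\Cpc(E)$, which combined with the first step completes the proof.

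The main obstacle is the middle energy estimate: spreading a point mass at $x_j$ into a probability measure on $\partial S(x_j)$ introduces extra ``downward'' subtree energy $a_j^\pp B_j^{1-\pp}$ absent in $\EE(\omega)$, and it is precisely the hypothesis $\Cpc(\partial S(x))\gtrsim d_\pi(x)^{1-p}$ (through $B_j^{1-\pp}\lesssim d_\pi(x_j)$), together with superadditivity of $t\mapsto t^\pp$ for $\pp\ge1$, that makes this remainder absorbable into the ``upward'' energy $\EE(\omega)$.
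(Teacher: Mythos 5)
Your proof is correct, but it runs in the opposite direction from the paper's. The paper argues on the \emph{function} side: it takes the extremal function $\varphi$ for $\Cpc(E)$, splits the vertices $x_j$ into $A=\{x_j:\ I\varphi(x_j^{-1})\ge 1/2\}$ and $B=\{x_j:\ I\varphi(x_j^{-1})<1/2\}$, observes that $2\varphi$ restricted to $\cup_{a\in A}[o,a]$ is admissible for $\Cpc(A)$ while the rescaled tail $\varphi/(1-I\varphi(b^{-1}))$ on $S(b)$ is admissible for $\Cpc_b(\partial S(b))\ge\Cpc(\partial S(b))\gtrsim d_\pi(b)^{1-p}\approx\Cpc(\{b\})$, and concludes by subadditivity of capacity. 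You argue on the \emph{measure} side: starting from an arbitrary $\omega=\sum_j a_j\delta_{x_j}$, you spread each atom onto $\partial S(x_j)$ via a near-equilibrium probability measure, and show the extra ``downward'' energy $\sum_j a_j^{\pp}B_j^{1-\pp}$ is absorbed into $\EE(\omega)$ using the hypothesis (through $B_j^{1-\pp}\lesssim d_\pi(x_j)$) together with the superadditivity $\sum_k b_k^{\pp}\le(\sum_k b_k)^{\pp}$ and Fubini. The two arguments invoke the hypothesis \eqref{chescatole} at the analogous point, but yours needs no extremal function for $E$ (only near-minimizers on each $\partial S(x_j)$, which exist by definition of the infimum), makes the constant explicit ($\EE(\nu)\le(1+2C)\,\EE(\omega)$ with $C$ the implied constant in \eqref{chescatole}), and your preliminary reduction to pairwise incomparable $x_j$ is a tidiness the paper's displayed sum over $\cup_{a\in A}[o,a]$ and $\cup_{b\in B}S(b)$ implicitly relies on anyway. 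The paper's version is slightly shorter given the equilibrium machinery of the appendices; yours is more self-contained and quantitative. All the individual steps you use (the identification $\Cpc(S(E))\le\Cpc(F)$, the trichotomy for $I^*\nu(y)$ under incomparability, the identity $\inf\{\EE_{x_j}(\mu):\ \mu(\partial S(x_j))\ge 1\}=B_j^{1-\pp}$, and the exponent bookkeeping $(1-p)(1-\pp)=1$) check out.
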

Observe that the converse inequality, $\Cpc(S(E))\ge\Cpc(E)$, follows from trivial comparison.

\begin{proof} Let $\varphi$ be the extremal function for $E$ and let $A=\{x_j:\ I\varphi(x_j^{-1})\ge1/2\}$ and $B=\{x_j:\ I\varphi(x_j^{-1})<1/2\}$. Then,
\begin{eqnarray}
\label{unattimo}
\Cpc(E)&=&\sum_{x\in T}\varphi^p(x)\pi(x)\crcr
&\ge&\sum_{x\in\cup_{a\in A}[o,a]}\varphi^p(x)\pi(x)+\sum_{b\in B}\sum_{x\in S(b)}\varphi^p(x)\pi(x)\crcr
&\ge&2^{-p}\Cpc(A)+\sum_{b\in B}(1-I\varphi(b^{-1}))^p\sum_{x\in S(b)}\left(\frac{\varphi(x)}{1-I\varphi(b^{-1})}\right)^{p}\pi(x).
\end{eqnarray}

Since, when $\zeta$ belongs to $S(b)$ for some $b\in B$,
$$
\sum_{x\in[b,\zeta]}\frac{\varphi(x)}{1-I\varphi(b^{-1})}\ge1,
$$
the function ${\varphi(x)}/{(1-I\varphi(b^{-1}))}$ is admissible for $\Cpc_b(\partial S(b))$, hence, 

\begin{eqnarray*}
\sum_{x\in S(b)}\left(\frac{\varphi(x)}{1-I\varphi(b^{-1})}\right)^{p}\pi(x)&\ge&\Cpc_b(\partial S(b))\crcr
&\ge&\Cpc(\partial S(b))\crcr
&\gtrsim&d_\pi(b)^{1-p},
\end{eqnarray*}
by \eqref{chescatole}. Inserting this last inequality in \eqref{unattimo},
\begin{eqnarray*}
\Cpc(E)&\gtrsim&\left[\Cpc(A)+\sum_{b\in B}d_\pi(b)^{1-p}\right]\crcr
&\gtrsim&\Cpc(A)+\sum_{b\in B}\Cpc(\{b\})\crcr
&\ge&\Cpc(A\cup B)=\Cpc(S(E)).
\end{eqnarray*}
\end{proof}

\begin{corollary}\label{finalissima}
Suppose that condition \eqref{chescatole} holds for the weight $\pi$. Then, for a mesaure $\mu$ on $\overline{T}$, the testing condition \eqref{treecond} is equivalent to the capacitary condition
$$
\mu(S(E))\lesssim\Cpc(E),
$$
to be checked over all $E$ subsets of $\partial T$ having the form $E=\cup_j\partial S(x_j)$ (with the set of the $x_j$'s being finite).
\end{corollary}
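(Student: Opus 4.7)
My plan is to split into the two implications. For the direction from \eqref{treecond} to the restricted capacitary condition, I would simply combine Theorem \ref{diretto} with Lemma \ref{scatole}: the testing hypothesis yields $\mu(F)\lesssim\Cpc(F)$ for every closed $F\subseteq\overline{T}$ (by Theorem \ref{diretto}), and specializing to $F=S(E)$---which is closed in $\overline{T}$, being a finite union of the clopen sets $S(x_j)$---and chaining with $\Cpc(S(E))\lesssim\Cpc(E)$ from Lemma \ref{scatole} gives at once $\mu(S(E))\lesssim\Cpc(E)$. This is the only place where the hypothesis \eqref{chescatole} is used.

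The converse is where the substance lies. My plan is to derive from the restricted capacitary hypothesis the Carleson imbedding \eqref{carleson}, which by Theorem \ref{carlesonmeasures} is equivalent to the testing condition \eqref{treecond}. Fix $f\ge 0$ on $T$. Since $f$ is nonnegative, $If$ is non-decreasing along every tree geodesic, so for each $t>0$ the super-level set $\{If>t\}\subseteq\overline{T}$ is an upper set in the tree order and decomposes as
$$
\{If>t\}=\bigsqcup_{j}S(x_j^t),
$$
where $\{x_j^t\}$ is the (at most countable) antichain of minimal vertices of $T$ at which $If$ exceeds $t$. Writing $E_t^{(n)}=\bigcup_{j=1}^n\partial S(x_j^t)$, each finite truncation is admissible for the restricted capacitary hypothesis, so
$$
\mu\Bigl(\bigcup_{j\le n}S(x_j^t)\Bigr)\lesssim\Cpc(E_t^{(n)})\le\Cpc(\{If>t\}),
$$
the last inequality using monotonicity of $\Cpc$ under the inclusion $E_t^{(n)}\subseteq\{If>t\}$. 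Letting $n\to\infty$ and invoking countable additivity of $\mu$ yields $\mu(\{If>t\})\lesssim\Cpc(\{If>t\})$.

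To close the argument I would combine this pointwise-in-$t$ bound with the layer cake identity $\int(If)^p\,d\mu=p\int_0^\infty t^{p-1}\mu(\{If>t\})\,dt$ and Maz'ya's strong capacitary inequality $\int_0^\infty t^{p-1}\Cpc(\{If>t\})\,dt\lesssim\|f\|^p_{\ell^p(\pi)}$, arriving at the Carleson imbedding. The strong capacitary inequality is a classical fact in nonlinear potential theory (the continuous version is Theorem~7.1.1 of \cite{AH}, and the tree adaptation follows by the same scheme, cf. the material recalled in the appendices). The main technical obstacle I foresee is precisely the finite-to-countable passage above: the restricted hypothesis only concerns finite antichains, but the natural antichain $\{x_j^t\}$ may be infinite. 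The monotone exhaustion $E_t^{(n)}\nearrow E_t$, combined with plain inclusion-monotonicity of $\Cpc$ and countable additivity of $\mu$ on the left, is what lets one bypass the need for continuity of $\Cpc$ from below and complete the argument.
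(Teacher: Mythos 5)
Your argument is correct, and the forward implication is exactly the paper's (implicit) route: Theorem \ref{diretto} gives $\mu(F)\lesssim\Cpc(F)$ for closed $F$, one takes $F=S(E)$, and Lemma \ref{scatole} converts $\Cpc(S(E))$ into $\Cpc(E)$; this is indeed the only place \eqref{chescatole} enters. For the converse the paper offers no argument at all -- it tacitly leans on the final clause of Theorem \ref{diretto} (``\eqref{capcond} implies \eqref{treecond}''), which is itself only a citation of the Kerman--Sawyer duality and, strictly read, assumes the capacitary condition for \emph{all} closed sets, not merely for finite unions of boxes with $\mu(S(E))$ on the left. Your route is therefore not just different but arguably fills a genuine gap: the level-set decomposition $\{If>t\}=\bigsqcup_j S(x_j^t)$, the finite-to-countable exhaustion using only inclusion-monotonicity of $\Cpc$ and countable additivity of $\mu$, Maz'ya's strong capacitary inequality, and finally Theorem \ref{carlesonmeasures} to pass from the imbedding \eqref{carleson} back to \eqref{treecond}, together use nothing beyond the restricted hypothesis. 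The price is importing the strong capacitary inequality from \cite{AH}; since Theorem 7.1.1 there is stated for the general kernel framework reproduced in Appendix \hyperref[nlpt]{I}, and the paper freely imports that framework, this is a legitimate citation rather than a gap. The one point worth making explicit in a final write-up is that every $\zeta\in\partial T$ with $If(\zeta)>t$ already lies above some vertex $x$ with $If(x)>t$ (the potential at a boundary point is the supremum of its values along the geodesic), so that the minimal-vertex antichain really does exhaust the whole super-level set in $\oT$ and not just its trace on $T$.
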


\section{Appendix: Potential Theory on Trees}\label{pttI}
In \cite{AH}, Sect. 2.3-2.5, the basics of Nonlinear Potential Theory are developed, based on a kernel
$g:X\times M\to[0,+\infty]$, where:
\begin{itemize} 
\item $(M,\nu)$ is a measure space; 
\item $(X,\delta)$ is a separable, complete, locally compact metric space; 
\item $g(\cdot, y)$ is lower 
semicontinuous on $X$ for each $y\in M$; 
\item  $g(x,\cdot)$ is measurable for each $x\in X$.
\end{itemize}
Moreover, an exponent $p$ is chosen in $(1,\infty)$.

With these data, the \it capacity \rm of a set $E$ in $X$ is defined as
$$
\Cpc(E):=\left\{\|f\|^p_{L^p(\nu)}:\ f\ge0,\ \int_Mg(x,y)f(y)d\nu(y)\ge1\ \text{if}\ x\in E\right\}.
$$
Basic object in the theory are the potentials of a nonnegative, measurable function:
\begin{equation}
 \label{potential}
\GG f(x)=\int_M g(x,y)f(y)d\nu(y),\ x\in X;
\end{equation}
and of a Borel, nonnegative measure:
\begin{equation}
 \label{copotential}
\GGc m(y)=\int_X g(x,y)d m(x),\ y\in M.
\end{equation}

On the tree $T$ having root $o$, we set $X=\oT$, where $\oT$ is endowed with any reasonable metric (e.g. the 
length metric obtained by assigning to each edge $(\alpha,\alpha^{-1})$ the length $2^{-d(\alpha)}$) 
and $M=T$ with the discrete measure $\nu=\pi^{1-\pp}$. Our kernel is
$$
g(\xi,x)=\chi_{P(\xi)}(x)=\chi_{S(x)}(\xi).
$$
We shall also let $\varphi=f\nu$, hence, $\varphi^p\pi=f^p\nu$.

With these definitions and renormalizations, the following property summarizes the main results 
from \cite{AH}, Sect. 2.3-2.5.
\begin{proposition}
\label{elenco}
\hfill
\begin{enumerate}
\item $Gf(\zeta)=I(f\nu)(\zeta)=I\varphi$;
\item $\check{G} m(y)=I^* m(y)=\int_{S(y)}d m$;
\item $\Cpc(E)=\inf\left\{\sum \varphi^p\pi:\ \varphi\ge0,\ I\varphi\ge1\ \mbox{on}\ E\right\}$;
\item Let $E\subseteq\oT$. Then, $\Cpc(E)=0$ if and only if there is $\varphi\in L^p_+(\pi)$ such that 
$E\subseteq\{x:\ I\varphi(x)=+\infty\}$. In particular, $E\subseteq\partial T$;
\item Let $\overline{\Omega}_E$ be the closure of $\Omega_E$ in $L^p(\pi)$. Then,
$$
\overline{\Omega}(E):=\left\{\varphi\in L^p_+(\pi):\ I\varphi(\xi)\ge1\ \mbox{for}\ q.e.\ \xi\in E\right\};
$$
\item Let $E\subset \oT$. Then,
$\exists!\ \varphi^E$ such that $\varphi^E\in L_+^p(\pi)$, $I\varphi^E\ge1$ $q.e.$ on $E$, $\Cpc(E)=\sum \varphi^E(x)^p\pi(x)$;
\item The \it mutual energy \rm of $ m$ and $f$ is
$\EEE( m,f)=\int_{\oT}I(f\nu)d m=\sum_T I^* m\cdot f\cdot\nu=\EE( m,\varphi)$;
\item Let $K\subset\oT$ be compact. Then,
\begin{eqnarray*}
\Cpc(K)^{1/p}&=&\sup\left\{ m(K):\mbox{\ supported\ in\ }K,\ \sum_T[I^* m]^{p^\prime}\pi^{1-p^\prime}\le1\right\};
\end{eqnarray*}
\item There exists an extremal, positive  measure $ m^K$ with support in $K$, such that
$$
\varphi^K=[I^* m^K]^{p^\prime-1}\pi^{1-\pp}.
$$
Moreover,
$$
\Cpc(K)= m^K(K)=\sum_T[I^* m^K]^{p^\prime}\pi^{1-\pp}=\int_{\oT}I[\varphi^K] d m^K; 
$$
\item The \it nonlinear potential \rm of a measure $ m$ is
\begin{eqnarray*}
V( m)(\zeta)&=&I(\pi^{1-p^\prime}[I^* m]^{p^\prime-1})(\zeta).
\end{eqnarray*}
The energy of a measure is 
\begin{eqnarray*}
\EE( m)&=&\int_{\oT}V( m)d m=\sum_{x}(I^* m)^{p^\prime}\pi^{1-p^\prime}=
\int_{\oT}I[(I^* m)^{p^\prime-1}\pi^{1-p^\prime}]d m;
\end{eqnarray*}
\item Let $K$ be compact in $\oT$. Then,
$$
I\varphi^K=V( m^K)\le1\ \mbox{on\ supp}( m^K) 
$$
and 
$$
\Cpc(K)=\max\{ m(K):\  m\ge0\ \mbox{and\ supported\ in\ }K,\ \mbox{such that}\ V( m)\le1\ \mbox{on\ supp}( m)\};
$$
\item Let $E\subset\oT$ is such that $\Cpc(E)<\infty$. Then, there is a measure $ m^E$ such that $\varphi^E$, the capacitary function, is given by $\varphi^E=\left(I^* m^E\right)^{\pp-1}\pi^{1-\pp}$. Moreover,
$$
I\varphi^E\ge1\ q.e.\ \mbox{on}\ E,\ I\varphi^E\le1\ q.e.\ \mbox{on}\ \mbox{supp}( m^E)
$$
and
$$
 m^E(\overline{E})=\sum_T\left(I^* m^E\right)^\pp\pi^{1-\pp}=\int_\oT I\varphi^E d m^E=
\Cpc(E).
$$
\end{enumerate}
The capacity functional has the following basic properties:
\begin{proposition}
\label{basic}
\hfill
\begin{enumerate}
\item $\Cpc(\emptyset)=0$; 
\item If $E\subseteq F$, then $\Cpc(E)\le\Cpc(F)$;
\item If $K_i\searrow K$ are compact sets, then $\Cpc(K_i)\searrow\Cpc(K)$;
\item If $E_i \nearrow E$ are arbitrary sets, then $\Cpc(E_i)\nearrow\Cpc(E)$.
\end{enumerate}
\end{proposition}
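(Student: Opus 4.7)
Items (1) and (2) are formal: the zero function lies vacuously in $\Omega_\emptyset$, so $\Cpc(\emptyset)=0$; and $E\subseteq F$ gives $\Omega_F\subseteq\Omega_E$, whence $\Cpc(E)\le\Cpc(F)$. This monotonicity also supplies the trivial halves $\lim_i\Cpc(K_i)\ge\Cpc(K)$ in (3) and $\lim_i\Cpc(E_i)\le\Cpc(E)$ in (4); the content lies in the reverse inequalities.

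For (3), my plan is to exploit that $g(\cdot,y)$ is lower semicontinuous, which by Fatou makes $\GG f$ lower semicontinuous on $X$ for every $f\in L^p_+(\nu)$. Given $\epsilon>0$, pick $f\in\overline{\Omega}_K$ with $\|f\|_p^p\le\Cpc(K)+\epsilon$. The exceptional set $N:=K\cap\{\GG f<1\}$ has zero capacity, so by definition of $\Cpc$ there is a nonnegative $L^p$-function of arbitrarily small norm whose potential is $\ge 1$ pointwise on $N$; adding it to $f$ arranges $\GG f\ge 1$ pointwise on all of $K$ while increasing $\|f\|_p^p$ by at most $\epsilon$. The open set $U:=\{x:\GG f(x)>1-\epsilon\}$ then contains $K$, and since $\{K_i\setminus U\}$ is a decreasing sequence of compact sets with empty intersection, the finite intersection property forces $K_i\subseteq U$ for all large $i$. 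Hence $(1-\epsilon)^{-1}f\in\overline{\Omega}_{K_i}$ and $\Cpc(K_i)\le(1-\epsilon)^{-p}(\Cpc(K)+2\epsilon)$; letting $\epsilon\to 0$ closes the gap.

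For (4), assume $L:=\lim_i\Cpc(E_i)<\infty$ (otherwise there is nothing to prove) and let $f_i:=f^{E_i}$ be the unique extremals supplied by Theorem \ref{extremalf}. For $i\le j$, both $f_i$ and $f_j$ lie in $\overline{\Omega}_{E_i}$, hence so does their average; extremality of $f_i$ then yields $\|(f_i+f_j)/2\|_p^p\ge\Cpc(E_i)$. Combined with $\|f_i\|_p^p\to L$, the uniform convexity of $L^p$ (Clarkson's inequality, using $1<p<\infty$) forces $\{f_i\}$ to be Cauchy in $L^p_+(\nu)$; let $f$ be its strong limit, so $\|f\|_p^p=L$.

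The main obstacle is the last step: verifying $f\in\overline{\Omega}_E$, i.e.\ $\GG f\ge 1$ q.e.\ on $E=\bigcup_iE_i$. The workhorse is the capacitary Chebyshev inequality
$$
\Cpc\bigl(\{x:\GG h(x)>t\}\bigr)\le t^{-p}\|h\|_p^p\qquad (h\in L^p_+(\nu),\ t>0),
$$
immediate from testing $h/t$ in the definition of $\Cpc$. Applied to $h=|f_{j_k}-f|$ along a fast subsequence satisfying $\|f_{j_k}-f\|_p\le 2^{-k}$, and combined with Borel--Cantelli together with countable subadditivity of capacity on null sets, this upgrades the strong $L^p$-convergence to the q.e.\ convergence $\GG f_{j_k}\to\GG f$. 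Since $\GG f_{j_k}\ge 1$ q.e.\ on $E_i$ whenever $j_k\ge i$, the limit satisfies $\GG f\ge 1$ q.e.\ on each $E_i$, hence q.e.\ on $E$. Therefore $\Cpc(E)\le\|f\|_p^p=L$, completing (4).
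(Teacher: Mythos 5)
Your proof is correct. The paper itself gives no argument for this proposition --- Appendix I explicitly reports these facts from \cite{AH}, Sections 2.3 and 2.5 --- and your reasoning (lower semicontinuity of $\GG f$ plus a compactness/finite-intersection argument for decreasing compacts; uniform convexity of $L^p$ together with the capacitary weak-type estimate $\Cpc(\{\GG h>t\})\le t^{-p}\|h\|_p^p$ and a Borel--Cantelli upgrade to q.e.\ convergence for increasing arbitrary sets) is precisely the standard argument from that reference, so there is nothing to compare beyond noting the match.
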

A set $E$ in $\oT$ is \it capacitable \rm if 
$$
\Cpc(E)=\sup\left\{\Cpc(K):\ K\subseteq E\ \mbox{is\ compact}\right\}.
$$
The Capacitability Theorem guarantees that
all Suslin sets (hence, all Borel sets) are capacitable.

As a consequence of the above and of homogeneity, we have the following equivalent definitions of capacity.

\begin{eqnarray*}
 \cpc(K)&=&\inf\left\{\|\varphi\|_{L^p(\pi)}^p:\ I\varphi\ge1\ \mbox{on}\ K\right\}\crcr
&=&\inf\left\{\|\varphi\|_{L^p(\pi)}^p:\ I\varphi\ge1\ q.e.\ \mbox{on}\ K\right\}\crcr
&=&\inf\left\{\frac{\|\varphi\|_{L^p(\pi)}^p}{\left(\inf_{x\in K}I\varphi\right)^p}:\ I\varphi\ge1\ q.e.\  \mbox{on}\ K\right\}\crcr
&=&\left(\sup\{ m(K):\ \EE( m)\le1,\ \mbox{supp}( m)\subseteq K\}\right)^p\crcr
&=&\sup_{\mbox{supp}( m)\subseteq K}\frac{ m(K)^p}{\EE( m)^{p-1}}\crcr
&=&\left(\inf\{\EE( m):\  m(K)\ge1,\ \mbox{supp}( m)\subseteq K\}\right)^{1-p}\crcr
&=&\sup\{ m(K):\ V( m)\le1\ \mbox{on}\ \mbox{supp}( m),\ \mbox{supp}( m)\subseteq K\}\crcr
&=&\sup_{ m:\ \mbox{supp}( m)\subseteq K}\frac{ m(K)}{\sup_{\xi\in\mbox{supp}( m)}V( m)(\xi)^{p-1}}\crcr
&=&\left(\inf_{ m:\ \mbox{supp}( m)\subseteq K}\left\{\sup_{\xi\in\mbox{supp}( m)}V( m)(\xi)^{p-1}:\  m(K)\ge1\right\}\right)^{-1}\crcr
\end{eqnarray*}

\end{proposition}

\end{document}